\newtheorem{pro}{Proposition}[section]
\newtheorem{proposition}[pro]{Proposition}
\newtheorem{lemma}[pro]{Lemma}
\newtheorem{theorem}[pro]{Theorem}
\newtheorem{corollary}[pro]{Corollary}
\newtheorem*{corollary*}{Corollary}
\newtheorem*{thm*}{Theorem}
\theoremstyle{definition}
\newtheorem{definition}[pro]{Definition}
 \newtheorem*{example}{Example}
\newcommand{\cM}{{\mathcal{M}}}
\newcommand{\cS}{{\mathcal{S}}}
\newcommand{\cO}{{\mathcal{O}}}
\newcommand{\Diff}{{\mathrm{Diff}}}
\newcommand{\ra}{{\, \rightarrow \, }}
\newcommand{\lra}{{\, \longrightarrow \, }}
\newcommand{\bbR}{\mathbb{R}}
\newcommand{\Isom}{\mathrm{Isom}}
\newcommand{\Aff}{\mathrm{Aff}}
\newcommand{\Aut}{\mathrm{Aut}}
\newcommand{\C}{\mathrm{Z}} 
\newcommand{\rank}{{\mathop{\mathrm{rank}\, }}}
\newcommand{\bbQ}{\mathbb{Q}}
\newcommand{\bbZ}{\mathbb{Z}}
\newcommand{\ncan}{\nabla^{\rm can}}
\newcommand{\Tau}{\mathcal{T}}
\def\N{{\Bbb N}}
\def\C{{\Bbb C}}
\def\cC{{\mathcal C}}
\def\cM{{\mathcal M}}
\def\eps{\varepsilon}
\def\ts{\thinspace}
\begin{document}

\author{Oliver Baues and  Wilderich Tuschmann}
\address{Oliver Baues and Wilderich Tuschmann
\\ Fakult\"at f\"ur Mathematik
\\ Karlsruher Institut f\"ur Technologie (KIT)
\\ D-76128 Karlsruhe, Germany}
\email{oliver.baues@kit.edu}
\email{tuschmann@kit.edu}

\title[Seifert fiberings and collapsing]{{Seifert fiberings and collapsing of infrasolv spaces}  
\\ \hspace{1cm} \newline
\emph{\tiny \today}}


\subjclass{} 

\begin{abstract} We give a purely geometrical smooth characterization of
closed infrasolv manifolds and orbifolds 
by showing that, up to diffeomorphism, 
these are precisely the spaces which admit a collapse with 
bounded curvature and diameter to compact flat orbifolds.
Moreover, we distinguish irreducible smooth fake tori geometrically from standard ones
by proving that the former have non-vanishing $D$-minimal volume.
\end{abstract}

\maketitle

\section{Introduction}

In this article  we consider smooth orbifolds $\cO = X/ \Gamma$, 
where $X$ is a simply connected 
Riemannian manifold and $\Gamma$ is a
properly discontinuously acting group of isometries. 
We also assume that a finite orbifold covering space of $\cO$ is a manifold. 
If $X$
is contractible, then $\cO$ is called an \emph{aspherical orbifold}.
For the orbifolds considered here, the notion of smooth map and 
diffeomorphism can be declared by considering equivariant 
smooth maps on $X$. In what follows we are interested in
geometrical properties of orbifolds which distinguish
their diffeomorphism class. \\

Our first result relates collapsing of Riemannian 
orbifolds and the theory of Seifert fiberings: 

\begin{theorem} \label{thmA}
Let $M$ be a closed Riemannian orbifold which admits a bounded curvature collapse to a compact aspherical Riemannian orbifold $\cO$. Then $M$ admits the structure of
a smooth Seifert fibering over $\cO$ with infranil fiber. In particular, $M$ is an aspherical Riemannian orbifold.
\end{theorem}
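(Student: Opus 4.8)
The plan is to reduce the orbifold statement to one about collapsing \emph{manifolds} by passing to orthonormal frame bundles, and then to invoke Fukaya's equivariant fibration theorem. Write $m = \dim M$ and let $g_\epsilon$ be the given family of metrics on $M$ with $|K_{g_\epsilon}| \le 1$ and $(M, g_\epsilon) \to \cO$ in the Gromov--Hausdorff sense. First I would replace the $g_\epsilon$ by smoothed metrics $g'_\epsilon$ (via the Abresch--Cheeger--Fukaya--Gromov smoothing) so that all covariant derivatives $|\nabla^j K_{g'_\epsilon}|$ are bounded independently of $\epsilon$, while the Gromov--Hausdorff limit is unchanged; this is needed so that the canonical metrics induced on the frame bundle have uniformly bounded curvature.

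Next, form the orthonormal frame bundle $FM$. Even though $M$ is an orbifold, $FM$ is a smooth manifold carrying a smooth, locally free, isometric action of $G = O(m)$, whose isotropy groups are finite and nontrivial exactly over the singular points of $M$, and with $FM/G = M$. The canonical metrics $\tilde g_\epsilon$ induced by $g'_\epsilon$ are $G$-invariant and of uniformly bounded curvature. The key technical input is Fukaya's frame bundle theorem: the frame bundles $(FM, \tilde g_\epsilon)$ converge, after passing to a subsequence, to a smooth Riemannian manifold $Y$ of \emph{bounded geometry} on which $G$ acts isometrically, and passage to quotients identifies $Y/G$ with $\cO$. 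The whole point of the frame bundle is that $Y$ is a genuine manifold with injectivity radius bounded below, even though the base limit $\cO$ is only an orbifold.

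Now I would apply the equivariant version of Fukaya's fibration theorem to the $G$-equivariant bounded-curvature collapse $(FM, \tilde g_\epsilon) \to Y$: since $Y$ has injectivity radius bounded below and $G$ is compact and acts isometrically, for $\epsilon$ small there is a $G$-equivariant smooth fibration $\hat f : FM \to Y$ which is an almost Riemannian submersion and whose fibers are infranilmanifolds. Dividing by $G$, equivariance makes $\hat f$ descend to a smooth map $f : M = FM/G \to Y/G = \cO$. This $f$ is the desired Seifert fibering: over the regular part of $\cO$ the fibers are the infranil fibers of $\hat f$, while over a singular point the finite isotropy of $G$ acts on the infranil fiber, producing precisely the local quotient model of a Seifert fibering with infranil fiber. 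I expect the main obstacle to lie here --- in checking that the descended map carries the structure of an injective Seifert fibering in the precise sense used in this paper, namely that the local models near singular fibers are the standard ones and that the nilpotent structure transported from $\hat f$ is compatible with the $G$-action; this requires combining the geometry of the $N$-structure underlying $\hat f$ with the algebraic Seifert local theory.

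Finally, asphericity is a formal consequence. The typical fiber $F$ of $f$ is an infranilmanifold, hence aspherical with contractible universal cover, and the base $\cO$ is aspherical by hypothesis. Passing to orbifold universal covers, $\tilde M$ fibers over the contractible $\tilde{\cO}$ with contractible fiber $\tilde F$, so $\tilde M$ is contractible; therefore $M$ is an aspherical Riemannian orbifold.
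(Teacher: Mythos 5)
Your route is genuinely different from the paper's: you lift the collapse to the orthonormal frame bundle and work $O(m)$-equivariantly, in the spirit of Cheeger--Fukaya--Gromov and Fukaya's orbifold convergence theory, whereas the paper never touches the frame bundle. Instead it chooses a torsion-free normal finite-index subgroup $\Theta_0\leq\Theta$ so that $T=Y/\Theta_0$ is an aspherical \emph{manifold}, and uses the equivariant Gromov--Hausdorff machinery (Propositions A.4--A.8) to produce matching finite manifold covers $\bar M_i\to M_i$ with a finite deck group $\mu$ such that $\bar M_i\to T$ is a $\mu$-equivariant collapse of manifolds; Theorem B.2 is then applied with $G=\mu$ finite, not $G=O(m)$. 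Both reductions are legitimate ways to put oneself in the hypotheses of Fukaya's equivariant fibration theorem, and your version has the advantage of avoiding the delicate subgroup bookkeeping of Proposition 3.1 (at the cost of needing the smoothing step and the frame-bundle limit theory for orbifolds).

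The genuine gap is the step you yourself flag as ``the main obstacle'': passing from the descended singular fibration $f:M\to\cO$ to a Seifert fiber space $(X,N,\pi)$ with a diffeomorphism $X/\pi\to M$ in the sense of Definitions 2.5 and 2.7. This is not a routine local-model check; it is where the actual content lies, and your setup makes it harder rather than easier. The paper's mechanism (Propositions 2.8 and 2.10) produces the Seifert structure by pulling the infra-$N$ bundle back over the contractible universal cover of an \emph{aspherical manifold} base, trivializing it there, and then using Malcev rigidity (Lemma 2.9) to show that $\pi_1$ normalizes $N$ inside $\Diff(N\times W)$; the quotient step needs a \emph{finite} group acting by fiberwise affine maps. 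In your configuration neither input is available: the limit $Y$ of the frame bundles is not aspherical, and $O(m)$ is neither discrete nor finite, so you cannot run that argument on $\hat f:FM\to Y$ directly. The repair is to pull $f$ back over a finite aspherical manifold cover $T\to\cO$ and then apply exactly the paper's Propositions 2.8 and 2.10 --- but that re-imports the finite-cover step you were trying to avoid, and until it is done the conclusion ``$M$ carries a Seifert fibering over $\cO$ with infranil fiber'' (and hence the asphericity argument, which presupposes the Seifert structure to lift the fibration to universal covers) is not established.
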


Seifert fiber spaces with infranil fiber are a special class of orbibundles whose fibers are naturally diffeomorphic to infranil-manifolds. 
The notion of Seifert fiber space with infranil fiber  was systematically developed by Raymond, Lee and others. 
 (See \cite{LR_3, LR_4} for
recent accounts.  The precise definition of Seifert fiber spaces is also given in Section \ref{sect:Seifert} below.) \\

A Riemannian orbifold $\cS = X/\Gamma$ is called 
an {\em infrasolv orbifold} if $X$ is isometric to a simply
connected solvable Lie group $S$ with left-invariant metric
and $\Gamma$ is contained in the group of affine isometries
of $S$. It is called {\em infranil} if $S$ is nilpotent. Remarkably,
as is proved in \cite{Baues}, infrasolv orbifolds are diffeomorphic if and only if their fundamental groups are isomorphic. 

\begin{theorem}  \label{thmB}
Let $\cS$ be a closed infrasolv manifold (or orbifold) 
with fundamental group $\pi_{1}(\cS)$. Then $\cS$ admits a canonical Seifert fibering with  infranil fiber over a compact 
flat orbifold $\cO$. 
\end{theorem}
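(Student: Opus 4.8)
The plan is to build the fibering directly from the canonical algebraic structure of $S$, using its nilradical, and then to check that the resulting data satisfy the definition of a Seifert fibering with infranil fiber from Section~\ref{sect:Seifert}. Write $\cS = S/\Gamma$, where $S$ is a simply connected solvable Lie group with a left-invariant metric and $\Gamma \subseteq \Isom_{\Aff}(S)$ acts properly discontinuously and cocompactly. Since left translations are isometries, an affine isometry $(s,\phi)$ must have $\phi \in \Aut(S)$ preserving the inner product on $\mathfrak{s} = \mathrm{Lie}(S)$; hence $K := \{\phi \in \Aut(S) : \phi \text{ isometric}\}$ is compact and $G := \Isom_{\Aff}(S) = S \rtimes K$. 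Because $K$ is compact and $\cS$ is closed, $\Gamma$ is a cocompact lattice in $G$.

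First I would introduce the nilradical $N \triangleleft S$. As $S$ is simply connected solvable, $[S,S] \subseteq N$, so $S/N \cong \bbR^{k}$ is abelian; the left-invariant metric induces the flat Euclidean metric on the quotient, and the projection $p\colon S \to \bbR^{k}$ is a fibration with fiber $N$. Since $N$ is characteristic in $S$ it is normalized by $K$, so $N \triangleleft G$, and every isometric automorphism acts orthogonally on $\mathfrak{s}/\mathfrak{n} \cong \bbR^{k}$. Thus the induced homomorphism $\rho\colon G \to \Isom(\bbR^{k})$ is $p$-equivariant, with image the closed subgroup $\bbR^{k}\rtimes \bar K$, where $\bar K \subseteq O(k)$ is the compact point group through which $K$ acts on $\mathfrak{s}/\mathfrak{n}$.

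The crux is to show that $Q := \rho(\Gamma)$ is a crystallographic group while the fibers stay compact. Here I would invoke the structure theory of lattices in solvable-by-compact Lie groups (Mostow; see \cite{Baues}): since $N \triangleleft G$ is closed and normal and $\Gamma$ is a lattice, the intersection $\Gamma \cap N$ is a lattice in $N$. As the kernel of $\rho$ on $G$ is $N$ extended by a compact group, the projected group $Q = \rho(\Gamma)$ is then a lattice in $\rho(G) = \bbR^{k}\rtimes \bar K$. A cocompact discrete subgroup of $\Isom(\bbR^{k})$ is exactly a crystallographic group, so $\cO := \bbR^{k}/Q$ is a compact flat orbifold and $p$ descends to $\bar p\colon \cS \to \cO$. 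This step is the main obstacle, because the image of a properly discontinuously acting group under a quotient projection need not again act properly discontinuously; it is precisely solvability (polycyclicity of $\Gamma$, amenability of $G$) that forces the projected base action to remain discrete and cocompact.

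Finally I would identify the fibers and assemble the Seifert data. The kernel $\Gamma_{0} := \ker(\rho|_{\Gamma})$ lies in $N \rtimes K_{0}$ with $K_{0} = \{\phi \in K : \bar\phi = \mathrm{id}\}$ compact, contains $\Gamma \cap N$ with finite index (the quotient embeds into $K_{0}$ and is discrete), and therefore acts on $N$ as an almost crystallographic group; hence each fiber $N/\Gamma_{0}$ is an infranilmanifold, respectively an infranil orbifold. Combining the exact sequence $1 \to \Gamma_{0} \to \Gamma \to Q \to 1$ with the left $N$-action on $S$ and the $p$-equivariance of $\Gamma$ yields the equivariant local triviality, the typical fiber, and the structure group required by the definition of a Seifert fibering with infranil fiber in the sense of Lee and Raymond \cite{LR_3, LR_4}. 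Since the whole construction uses only the characteristic nilradical of $S$ together with the given isometric affine action, and no further choices enter, the resulting fibering $\bar p\colon \cS \to \cO$ is canonical, as claimed.
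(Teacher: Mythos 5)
Your strategy---fiber $\cS=S/\Gamma$ over $S/N$ for $N$ the nilradical of $S$---breaks down at exactly the step you single out as the crux. The claim that $\Gamma\cap N$ is a lattice in $N$ (equivalently, that $\rho(\Gamma)$ is discrete in $\Isom(\bbR^{k})$) is false in general. Mostow's theorem says that a lattice in a connected \emph{solvable} Lie group $G$ meets the nilradical \emph{of $G$} in a lattice; here $G=S\rtimes K$, whose nilradical can be strictly larger than the nilradical $N$ of $S$, and when $K$ is a non-abelian compact group the compact semisimple part of $G$ even destroys the hypotheses under which the theorem holds. A concrete counterexample: let $S=\bbR^{2}\rtimes_{\rho}\bbR$ with $\rho_{t}$ the rotation by angle $t$ (the universal cover of the Euclidean group of the plane), with its standard flat left-invariant metric, and let $K\cong SO(2)\leq\Aut(S)$ act by rotating the $\bbR^{2}$-factor. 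For $(v,t)\in\bbR^{3}$ the affine isometry $\gamma_{(v,t)}=\bigl((v,t),\rho_{-t}\bigr)\in S\rtimes K$ acts on $S\cong\bbR^{3}$ as the Euclidean translation by $(v,t)$, and $(v,t)\mapsto\gamma_{(v,t)}$ is an injective homomorphism. Taking $L\leq\bbR^{3}$ to be the lattice spanned by $(e_{1},1)$, $(e_{2},\sqrt{2})$ and $(0,\pi)$, the group $\Gamma=\{\gamma_{(v,t)}:(v,t)\in L\}\cong\bbZ^{3}$ is a discrete group of affine isometries of $S$ with precompact holonomy acting freely, properly discontinuously and cocompactly, so $\cS=S/\Gamma$ (a $3$-torus) is a genuine closed infrasolv manifold. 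But $N=\bbR^{2}$, and since $1,\sqrt{2},\pi$ are rationally independent one gets $\Gamma\cap N=\{e\}$, while $\rho(\Gamma)\leq\Isom(\bbR)$ is the dense translation group $\bbZ+\sqrt{2}\,\bbZ+\pi\bbZ$; the quotient $\bbR/\rho(\Gamma)$ is not an orbifold and your map $\bar p$ does not exist. (Mostow is not contradicted: the nilradical of $G$ here is the $3$-dimensional abelian group $\bbR^{2}\times\{((0,t),\rho_{-t}):t\in\bbR\}$, and $\Gamma$ is a lattice in \emph{that} group, which is not contained in $S$.)

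This failure of ``rationality'' of $N$ with respect to $\Gamma$ is precisely what the paper's proof is built to circumvent. Rather than working with the given presentation $S/\Gamma$, the paper first replaces $\cS$ by the standard $\pi$-orbifold $M_{\pi}=U_{\pi}/\pi$ on the unipotent shadow, which is legitimate by the smooth rigidity theorem of \cite{Baues}, and then fibers $M_{\pi}$ using the Zariski closure $U_{0}$ of the Fitting subgroup $\Delta_{0}=\mathrm{Fitt}(\pi)$ inside $U_{\pi}$ (Proposition \ref{prop:standard_seifert}). In that algebraic model $\Delta_{0}$ is a lattice in $U_{0}$ by construction, the rank count $\rank\bar\Theta=\dim U_{\pi}-\dim U_{0}$ forces the action of $\bar\Theta=\pi/\Delta_{0}$ on the base $\bbR^{k}$ to be properly discontinuous, and the output depends only on $\pi$---which is also what makes the fibering \emph{canonical}, whereas your construction, even in the cases where $\rho(\Gamma)$ happens to be discrete, depends on the chosen solvable model $S$ and generally produces a different fibering from the one over $\bbR^{k}/(\pi/\mathrm{Fitt}(\pi))$. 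To repair your argument you would have to replace $N$ by a normal subgroup that is rational with respect to $\Gamma$; passing to the algebraic hull and the unipotent shadow is in effect the systematic way of doing exactly that.
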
 

Contracting the fibers of the above bundle to a point in an inhomogeneous way
(see, for example, \cite{Tusch}) shows that $\cS$ admits a bounded curvature collapse to the flat orbifold $\cO$. Moreover, in this setting $\cS$ is infranil if and only if it collapses to a point. \\

We also prove the following converse to Theorem \ref{thmB}: 

%

\begin{theorem} \label{thmC}
Assume that the closed manifold 
(or compact orbifold) $M$  allows a bounded curvature collapse to 
a compact infrasolv orbifold $\cO$.  Then $M$ is diffeomorphic to 
an infrasolv manifold (respectively, orbifold) $\cS$.
\end{theorem}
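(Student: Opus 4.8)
The plan is to feed the hypothesis into Theorem \ref{thmA} and then read off the algebraic structure of $\pi_1(M)$. First I would observe that an infrasolv orbifold $\cO = S/\Gamma$ is automatically aspherical: the simply connected solvable Lie group $S$ is diffeomorphic to a Euclidean space and hence contractible. Thus the hypothesis of Theorem \ref{thmA} is met, and $M$ carries a smooth Seifert fibering $p\colon M \ra \cO$ with infranil fiber $F$; moreover $M$ is aspherical.

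Next I would extract the fundamental group. The Seifert fibering yields a short exact sequence
$$1 \lra \pi_1(F) \lra \pi_1^{\mathrm{orb}}(M) \lra \pi_1^{\mathrm{orb}}(\cO) \lra 1,$$
in which the kernel $\pi_1(F)$ is the (almost crystallographic) fundamental group of an infranilmanifold, hence virtually nilpotent, while $\pi_1^{\mathrm{orb}}(\cO)$ is virtually polycyclic because $\cO$ is infrasolv. An extension of a virtually polycyclic group by a virtually nilpotent group is again virtually polycyclic, so $\pi := \pi_1^{\mathrm{orb}}(M)$ is virtually polycyclic. I would then invoke the realization of virtually polycyclic groups as orbifold fundamental groups of closed infrasolv orbifolds to produce an infrasolv orbifold $\cS$ with $\pi_1^{\mathrm{orb}}(\cS) \cong \pi$; in the manifold case $\pi$ is torsion-free and $\cS$ is a manifold.

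It remains to upgrade the isomorphism $\pi_1^{\mathrm{orb}}(M) \cong \pi_1^{\mathrm{orb}}(\cS)$ to a diffeomorphism $M \cong \cS$, and this is where I expect the main difficulty to lie. The classification quoted from \cite{Baues} compares two \emph{infrasolv} orbifolds, whereas $M$ is a priori only an aspherical Seifert-fibered orbifold; invoking it directly would be circular. The natural route is to exploit the smooth rigidity of Seifert fiberings with infranil fiber in the sense of Lee and Raymond: one matches the extension data of $p\colon M \ra \cO$ with that of an infrasolv model fibered over the same base, and transports the resulting isomorphism of fundamental group sequences to a fiber-preserving diffeomorphism. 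Carrying this out --- in particular handling the singular fibers over the orbifold locus and ensuring the comparison respects the smooth structure --- is the crux of the argument; once it is in place, the diffeomorphism $M \cong \cS$ follows, with $\cS$ a manifold exactly when $M$ is.
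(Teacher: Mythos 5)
Your outline is essentially the paper's own argument: infrasolv orbifolds are aspherical, so Theorem \ref{thmA} applies; the resulting extension of the virtually polycyclic group $\pi_1^{\mathrm{orb}}(\cO)$ by the finitely generated torsion-free nilpotent fiber lattice shows $\pi = \pi_1^{\mathrm{orb}}(M)$ is virtually polycyclic; and the final diffeomorphism comes from Seifert rigidity (Theorem \ref{thm:nilrigidity}) rather than from the infrasolv classification of \cite{Baues}, exactly as you propose --- your circularity worry is the right one. However, the step you explicitly defer is the real content of the proof, and as written your argument has a hole there. The paper fills it with Proposition \ref{prop:standard_seifert}: for the standard $\pi$-orbifold $M_{\pi}$ (which carries an infrasolv structure) one constructs a Seifert fibering realizing the \emph{same} extension $1 \to \Delta_{0} \to \pi \to \bar\Theta \to 1$, by taking $U_{0}$ to be the Zariski closure of $\Delta_{0}$ in the unipotent shadow $U_{\pi}$ and verifying via the algebraic hull and a rank count that the quotient $U_{\pi}/U_{0}$ is the unipotent shadow of $\bar\Theta$, so that the base is the standard $\bar\Theta$-orbifold $M_{\bar\Theta}$. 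One then still needs the smooth rigidity of infrasolv orbifolds (\cite[Theorem 1.4]{Baues}, applied to the \emph{base}, not to $M$ --- so no circularity) to identify $\cO$ equivariantly with $M_{\bar\Theta}$, which is what lets Theorem \ref{thm:nilrigidity} compare the two Seifert fiber spaces over equivariantly diffeomorphic base actions. Be aware also that Theorem \ref{thmB} as stated only treats virtually abelian quotients (flat base), so it cannot be quoted directly for the infrasolv base $\cO$; the more general Proposition \ref{prop:standard_seifert} is what is actually required.
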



We thus obtain a purely geometrical smooth characterization of infrasolv 
manifolds 
as follows:

\begin{corollary} A closed manifold is diffeomorphic 
to an infrasolv manifold if and only if it admits a bounded
curvature collapse to a compact flat orbifold. 
\end{corollary}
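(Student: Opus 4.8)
The plan is to prove the corollary by assembling the two directions from the theorems already established, with Theorems \ref{thmB} and \ref{thmC} doing the heavy lifting. First I would dispense with the easy direction. Suppose $M$ is diffeomorphic to a closed infrasolv manifold $\cS$. By Theorem \ref{thmB}, $\cS$ carries a canonical Seifert fibering with infranil fiber over a compact flat orbifold $\cO$. As indicated in the remark following Theorem \ref{thmB}, one contracts the infranil fibers inhomogeneously (this is the standard Cheeger--Gromov type construction, cf.\ the reference to \cite{Tusch}) to produce a family of metrics on $\cS$ whose sectional curvatures remain uniformly bounded while the fibers shrink to points, so that $\cS$ Gromov--Hausdorff converges to $\cO$. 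Since the curvature stays bounded and the limit $\cO$ is flat, this is exactly a bounded curvature collapse of $M$ to a compact flat orbifold. That settles ``only if.''

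For the converse (``if''), suppose the closed manifold $M$ admits a bounded curvature collapse to a compact flat orbifold $\cO$. A flat orbifold is in particular a compact infrasolv orbifold (take $S = \bbR^{n}$, which is abelian hence solvable, equipped with the flat metric, and $\Gamma$ a crystallographic group acting by affine isometries). Thus the hypothesis of Theorem \ref{thmC} is met with this $\cO$, and Theorem \ref{thmC} immediately yields that $M$ is diffeomorphic to an infrasolv manifold $\cS$. This is the direction that genuinely uses the deep content of the paper.

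The one point requiring a word of care is the compatibility of the orbifold/manifold bookkeeping. In the ``if'' direction we must ensure that, because $M$ is assumed to be a \emph{manifold}, Theorem \ref{thmC} delivers an infrasolv \emph{manifold} rather than merely an orbifold; this is precisely the parenthetical ``(respectively, orbifold)'' refinement in the statement of Theorem \ref{thmC}, so no extra argument is needed. Conversely, in the ``only if'' direction we should note that the flat orbifold $\cO$ appearing as the collapse limit need not be a manifold even when $\cS$ is, which is why the corollary is correctly phrased with the limit being a flat \emph{orbifold}.

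I do not anticipate a serious obstacle here, since the corollary is essentially a formal consequence of Theorems \ref{thmB} and \ref{thmC} once one observes that flat orbifolds form a subclass of infrasolv orbifolds. If anything were to be the main point, it would be verifying that the inhomogeneous fiber contraction in the forward direction keeps curvature bounded uniformly along the whole collapsing family; but this is exactly the content imported from \cite{Tusch} and the remark following Theorem \ref{thmB}, so it may be cited rather than reproved.
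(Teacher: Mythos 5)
Your proposal is correct and follows exactly the route the paper intends: the ``only if'' direction comes from Theorem \ref{thmB} together with the inhomogeneous fiber contraction noted in the remark after it (citing \cite{Tusch}), and the ``if'' direction is Theorem \ref{thmC} applied after observing that a compact flat orbifold is an infrasolv orbifold modeled on the abelian group $\bbR^{n}$. Your added remarks on the manifold/orbifold bookkeeping are accurate and consistent with the paper's parenthetical refinements.
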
  

The corollary 
generalizes the famous almost flat manifold theorem of Gromov-Ruh \cite{Gro_78,Ruh_82} from infranil to infrasolv manifolds. It also holds for orbi- instead of manifolds (cf.\ Section 4), and strengthens the \emph{topological} characterizations of infranil and infrasolv manifolds in terms of collapsing, which were developed in 
 \cite{F-H}  and in \cite{Tusch}, to the smooth case. \\

The proof of the preceding results relies on techniques in the theory of collapsing of Riemannian manifolds (see the appendix of this article) and, as another crucial ingredient, on the rigidity of smooth Seifert fiberings with infranil fiber as established 
in \cite{KLR}. We recall the required rigidity results in Section \ref{sect:Seifert_rigidity}.\\

The following applications shed some light on the geometry of fake tori. 
Recall that a smooth fake torus $\Tau^n$ is a smooth manifold which is homeomorphic but not diffeomorphic to a standard torus $T^n$. 
The existence of smooth fake tori, $n \geq 5$, was established
by Wall and Browder, see \cite{Browder}, \cite[15 A]{Wall}.
Since, 
as follows from \cite{Baues}
,  smooth fake tori do not carry an infrasolv structure, we first have: 

\begin{corollary} A smooth fake torus does not 
allow a bounded curvature collapse to a compact flat orbifold. 
\end{corollary}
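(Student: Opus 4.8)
The plan is to argue by contradiction, combining the geometric characterization of the preceding corollary with the diffeomorphism rigidity of infrasolv spaces. First I would suppose that some smooth fake torus $\Tau^{n}$ admits a bounded curvature collapse to a compact flat orbifold. By the preceding corollary, $\Tau^{n}$ would then be diffeomorphic to a closed infrasolv manifold $\cS$; in particular $\Tau^{n}$ itself would carry an infrasolv structure. It therefore suffices to show that no smooth fake torus admits an infrasolv structure, and the entire geometric content of the statement is already supplied by the preceding corollary.

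To carry this out, I would recall that the standard torus $T^{n} = \bbR^{n}/\bbZ^{n}$ is flat, hence infranil and in particular an infrasolv manifold, with fundamental group $\bbZ^{n}$. A smooth fake torus $\Tau^{n}$ is by definition homeomorphic to $T^{n}$, so $\pi_{1}(\Tau^{n}) \cong \bbZ^{n} \cong \pi_{1}(T^{n})$. If $\Tau^{n}$ were infrasolv, then $\Tau^{n}$ and $T^{n}$ would be two closed infrasolv manifolds with isomorphic fundamental groups. By the rigidity result of \cite{Baues} quoted in the introduction---infrasolv orbifolds are diffeomorphic if and only if their fundamental groups are isomorphic---this would force $\Tau^{n}$ to be diffeomorphic to $T^{n}$, contradicting the very definition of a fake torus as a manifold homeomorphic but \emph{not} diffeomorphic to $T^{n}$. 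Hence no fake torus is infrasolv, and the conclusion follows.

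The argument is a formal consequence of two nontrivial inputs, so I do not expect a genuinely deep obstacle here: the only points requiring care are the correct identification of the fundamental group under homeomorphism and the observation that the standard torus is itself an infrasolv (indeed flat) reference manifold against which the rigidity theorem of \cite{Baues} can be applied. Once these are in place, the preceding corollary does all the geometric work, and the statement reduces to the purely topological and algebraic rigidity of infrasolv diffeomorphism types.
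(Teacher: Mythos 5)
Your proposal is correct and follows exactly the route the paper intends: it combines the preceding corollary (collapse to a flat orbifold implies diffeomorphic to an infrasolv manifold) with the rigidity theorem of \cite{Baues} (infrasolv manifolds with isomorphic fundamental groups are diffeomorphic) applied against the standard torus. The paper compresses this into the single remark that ``as follows from \cite{Baues}, smooth fake tori do not carry an infrasolv structure,'' and your write-up supplies precisely the missing details of that deduction.
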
  

\medskip 
Gromov has defined the {\it minimal volume} of a smooth
manifold $M$, $\text{MinVol}(M)$, as the infimum of all volumes $vol_g(M)$,
where $g$ ranges over all smooth complete Riemannnian metrics on $M$ 
whose sectional curvature is bounded in absolute value by one.
Gromov's critical volume conjecture asserts that there exists 
$\delta(n)>0$
such that if a closed smooth $n$-manifold $M^n$ 
admits a metric with
$\vert \text{sec} (M,g) \vert \le 1$
and $\text{vol}(M,g)\le\delta(n)$,
then $\text{MinVol}(M)=0$. For $n\le 4$ 
this conjecture is known to hold, see \cite{R_93, CR_96} and the further references cited there.

Given a real number $D>0$, one may also consider the {\it D-minimal volume}, $\text{$D$-MinVol}(M)$, where one 
requires that the infimum is taken over all  metrics $g$ 
as above,  which additionally have diameter bounded from
above by $D$.  
Cheeger and Rong have shown that there exists $\delta=\delta(n,D)>0$
such that if a closed smooth $n$-manifold $M$ admits a Riemannian metric $g$
with $\vert \text{sec} (M,g) \vert \le 1$, $\text{diam}(M,g) \le D$, 
and $\text{vol}(M,g)\le\delta(n,D)$,
then $\text{$D$-MinVol}(M)=0$, see \cite{CR_96}.
In the special context of aspherical $M$ this was
proved by Fukaya in \cite{Fukaya_90}.

\ 

Call a fake smooth torus \emph{irreducible} if it is not a product 
of a standard torus and a fake torus of lower dimension. 
For example, smooth fake tori obtained by taking the connected sum of standard tori
with exotic spheres are always irreducible. Then the following holds:

\begin{corollary} \label{cor:irreducibletori}
Let $\Tau$ be a smooth fake torus which
is irreducible. Then, for all $D>0$, 
there exists a positive constant $\nu(D)$
such that
$$ \text{$D$-$\mathrm{MinVol}$}  (\Tau)  \, \geq \, \nu(D)>0\; . $$
\end{corollary}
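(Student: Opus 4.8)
The plan is to argue by contradiction, using the Cheeger--Rong gap together with the collapsing and rigidity results recalled above. Set $n=\dim\Tau$ and let $\delta(n,D)>0$ be the Cheeger--Rong constant; I will show that one may take $\nu(D)=\delta(n,D)$. Suppose instead that $D\text{-}\mathrm{MinVol}(\Tau)<\delta(n,D)$. By definition of the infimum there is then a single metric $g$ with $|\mathrm{sec}|\le 1$, $\diam\le D$ and $\mathrm{vol}<\delta(n,D)$, so the Cheeger--Rong theorem gives $D\text{-}\mathrm{MinVol}(\Tau)=0$. Hence there is a sequence of metrics $g_i$ on $\Tau$ with $|\mathrm{sec}|\le 1$, $\diam(g_i)\le D$ and $\mathrm{vol}(g_i)\to 0$, i.e.\ a bounded curvature, bounded diameter collapse. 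By the collapsing theory recalled in the appendix, after passing to a subsequence the $(\Tau,g_i)$ converge in the Gromov--Hausdorff sense to a compact Riemannian orbifold $\cO$ of dimension strictly less than $n$, realized for large $i$ by an infranil fibration of $\Tau$ over $\cO$. Since the infranil fibers and $\Tau$ itself are aspherical, the homotopy exact sequence of this fibration forces $\pi_k^{\mathrm{orb}}(\cO)=0$ for $k\ge 2$, so that $\cO$ is a compact \emph{aspherical} Riemannian orbifold.

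Next I would invoke Theorem~\ref{thmA}, which upgrades this to a smooth Seifert fibering of $\Tau$ with infranil fiber $F$ over $\cO$. The exact sequence $1\to\pi_1(F)\to\pi_1(\Tau)\to\pi_1^{\mathrm{orb}}(\cO)\to 1$ realizes $\pi_1(F)$ as a subgroup $\Lambda$ of $\pi_1(\Tau)=\mathbb{Z}^n$; being finitely generated torsion-free abelian, $\Lambda\cong\mathbb{Z}^m$, and since an infranilmanifold with abelian fundamental group is a flat torus, $F\cong T^m$ with $m=\dim F\ge 1$. I then claim that $Q=\pi_1^{\mathrm{orb}}(\cO)=\mathbb{Z}^n/\Lambda$ is torsion-free: being abelian, any torsion would provide a nontrivial finite central subgroup, which for an effective Riemannian orbifold with singular set of codimension at least two and trivial generic isotropy is impossible. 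Consequently $\Lambda$ is a primitive direct summand, $\mathbb{Z}^n\cong\Lambda\oplus Q$ with $Q\cong\mathbb{Z}^{n-m}$, and $\cO$ is a closed aspherical \emph{manifold} with $\pi_1(\cO)\cong\mathbb{Z}^{n-m}$. By the topological rigidity of tori, $\cO$ is homeomorphic to $T^{n-m}$; that is, $\cO$ is either a standard or a fake torus.

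The decisive step will be to promote this fibration to a smooth product. Because the extension splits and $\pi_1(\Tau)$ is abelian, the Seifert fibering over the manifold $\cO$ has trivial fundamental-group data, so its standard model is the product fibering $T^m\times\cO$; by the rigidity of smooth Seifert fiberings with infranil fiber \cite{KLR}, the fibering of $\Tau$ is smoothly equivalent to this model, whence $\Tau$ is diffeomorphic to $T^m\times\cO$. If $\cO$ is diffeomorphic to $T^{n-m}$ then $\Tau\cong T^n$, contradicting that $\Tau$ is a fake torus (this case is also excluded directly by the Corollary that a smooth fake torus admits no bounded curvature collapse to a compact flat orbifold). If instead $\cO$ is a fake torus, then, as $m\ge 1$, the diffeomorphism $\Tau\cong T^m\times\cO$ exhibits $\Tau$ as the product of the standard torus $T^m$ with a fake torus of dimension $n-m<n$, contradicting the irreducibility of $\Tau$. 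Either way we reach a contradiction, so $D\text{-}\mathrm{MinVol}(\Tau)\ge\delta(n,D)=:\nu(D)>0$.

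The main obstacle is the passage from the abstract Seifert fibering to the smooth product $T^m\times\cO$: this is precisely where the rigidity of \cite{KLR} is indispensable, since the splitting of $\pi_1$ controls only the homotopy type, not the smooth structure of the total space, and it is this rigidity that lets the fake structure be detected factor by factor. A secondary point deserving care is the verification that the base $\cO$ is an honest manifold, equivalently that the fiber subgroup $\Lambda$ is a primitive direct summand; I expect this to follow cleanly from the effectiveness of a Riemannian collapse limit together with the commutativity of $\pi_1(\Tau)$, but it should be argued explicitly rather than asserted.
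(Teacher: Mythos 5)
Your proof is correct and follows essentially the same route as the paper: reduce to showing that $\Tau$ admits no bounded curvature collapse to a compact aspherical Riemannian orbifold, invoke Theorem~\ref{thmA} to get a Seifert fibering with infranil (here toral) fiber, observe that the resulting extension of $\pi_1(\Tau)=\bbZ^n$ splits, and apply the Seifert rigidity of \cite{KLR} to exhibit $\Tau$ as a smooth product $T^m\times\cO$, contradicting either fakeness or irreducibility. The only difference is that you make the Cheeger--Rong gap explicit to obtain the uniform constant $\nu(D)=\delta(n,D)$, whereas the paper leaves this implicit and simply rules out $D$-$\mathrm{MinVol}(\Tau)=0$; your own caveat about verifying that the base is an honest (torsion-free quotient) manifold is warranted, but the paper's proof is no more detailed on that point.
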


Notice that closed infrasolvmanifolds have vanishing
minimal volume, since, by Corollary 1.4, they actually have vanishing
D-minimal volume for appropiate $D>0$.

\

\noindent
{\bf Question 1.7.}
{\it Do smooth fake irreducible tori also always have nonvanishing minimal volume ?

}

%

\section{Seifert fiber spaces with nil-geometry}
\label{sect:Seifertmain}

\subsection{Infra-$G$ manifolds} \label{sect:infraN}
Let $G$ be a Lie group. We let $\Aff(G) = G \cdot \Aut(G)$ 
denote the group of affine transformations of $G$.
\emph{Note that the
affine group $\Aff(G)$ is precisely the normalizer of (the left-action of) $G$ in the group of all diffeomorphisms $\Diff(G)$.}

\begin{definition} Let $\Delta \leq
\Aff(G)$ be a discrete subgroup whose homomorphic image in 
$\Aut(G)$ has compact closure. The quotient $G/\Delta$ is called an \emph{infra-$G$ space}. If $G/\Delta$ is a manifold then
it is called an \emph{infra-$G$ manifold}.
\end{definition}

\begin{example}[Infranil manifolds] 
Let $N$ be a simply connected nilpotent Lie group. Then 
a  compact  infra-$N$ manifold $N/\Delta$ is called an 
\emph{infranil manifold}. For an infranil manifold $N/\Delta$,
 the intersection
$\Delta_{0} = N \cap \Delta$ is the maximal nilpotent normal
subgroup of $\Delta$ and it has finite index in $\Delta$. (Bieberbach's first theorem, see \cite[Chapter VIII]{Raghunathan}.) 
In particular, for every infranil manifold, $\Delta$ has finite image in $\Aut(N)$, and $N/\Delta$
 has a canonical finite normal covering by the \emph{nilmanifold} $N/ \Delta_{0}$.
\end{example}

\subsubsection{Affine group} 
Let $\nabla^{\rm can}$ denote the  natural flat 
connection on $G$  which is defined by the property that left-invariant vector fields are parallel.
The group of connection preserving diffeomorphisms 
$\Aff(G,\nabla^{\rm can})$ then 
coincides with the group of affine transformations
$ \Aff(G)$ of $G$.
In particular, every infra-$G$ manifold $$ G/\Delta $$  
carries a flat connection $\nabla^{\rm can}$
induced from $G$. The group of connection preserving diffeomorphims of $(G/\Delta, \nabla^{\rm can})$ arises from the normalizer $N_{\Aff(G)}(\Delta)$ of $\Delta$ in the group $\Aff(G)$ 
as 
$$  \Aff(G/\Delta, \nabla^{\rm can}) \cong   N_{\Aff(G)}(\Delta) / \Delta \, . $$ 
(We will also use the shorthand 
 $ \Aff(G/\Delta)=  \Aff(G/\Delta, \nabla^{\rm can})$.) 

\subsection{Infra-$N$ bundles}
Let $f: M \ra B$ be a (locally trivial) fibration of smooth manifolds 
with fiber $F$.
\begin{definition} \label{def:infraNb}
The fibration $f$ is called an \emph{infra-$G$ bundle} over $B$ if $F = G/\Delta$ is an infra-$G$ manifold 
and the structure group of $f$ is contained in the affine group $\Aff(G/\Delta)$. 
\end{definition}

\subsubsection{Fiberwise affine maps}
Let $f: M \ra B$ and $f':M' \ra B'$ be infra-$G$ bundles with fiber
$G/\Delta$. Then a map $$ \varphi: M \,  \lra \, M'$$ is called
\emph{fiberwise affine} if it is a morphism of $\Aff(G/\Delta)$-bundles (that is, equivalently 
$\varphi$ is a bundle map which induces an affine diffeomorphism 
on each fiber). We let $\Aff(M,G/\Delta) = \Aff(M,G/\Delta,f)$ denote the group of all fiberwise affine diffeomorphisms of $f$. 

\subsection{Seifert fiber spaces} \label{sect:Seifert}
%
The following notion of Seifert fiber space 
has been  developed in \cite{KLR},
see also \cite{LR_3}.

\subsubsection{Construction}
Let $N$ be a Lie group which acts properly and freely on 
the manifold $X$,
and put $W = X/N$. We assume that $W$ is 
simply connected.
The normalizer of $N$ in $\Diff(X)$ will
be denoted by $\Diff(X,N)$. 
Let $\pi$ be  a group and 
$$ \rho: \pi \, \lra \,   \Diff(X,N)$$  an action of $\pi$
which is properly discontinuous. 
Put $\pi_{N} = \rho^{-1}(N)$ for the subgroup
of all elements in $\pi$ which act on $X$ via translations
of $N \leq \Diff(X)$. 

\begin{definition}  \label{def:Seifert}
Data $(X,N,\pi)$ as above define a \emph{Seifert fiber space}
if the following conditions are satisfied:
\begin{enumerate}
\item  
$\Gamma = \rho(\pi) \cap N$ is a discrete uniform subgroup of $N$,
\item the induced action of $\Theta = \pi\big/\pi_{N}$ on $W$
is properly discontinuous.
\end{enumerate}
\end{definition}

In the situation of Definition \ref{def:Seifert},  $\rho$ is called a \emph{Seifert action}. If the action of $\pi$ is faithful then it 
gives rise to an exact sequence of groups 
\begin{equation*}  \label{eq:ext0}
1 \lra \Gamma \lra \pi \lra \Theta \lra 1 \; \; , 
 \end{equation*}
where $\Gamma = \pi_{N}$ is isomorphic to a lattice
in $N$. 
\emph{Note that, in general, the induced action 
of $\Theta$ on $W$ may have a finite kernel.}

\subsubsection{Seifert bundle maps}
A Seifert fiber space gives rise to a Seifert bundle map
$$  \sigma: X\big/\pi \, \longrightarrow \,  W\big/ \pi = W\big/ \Theta \;  $$
whose fibers are compact infra-$N$ spaces. The 
space $X\big/\pi$ is called a \emph{Seifert bundle} over the base orbifold $W\big/\Theta$ with typical fiber the 
$N$-manifold $N/\rho(\pi) \cap {N}$. \\
%

\paragraph{\em Remark}%
If $X/\pi$ is a manifold then, in fact, all fibers 
of $\sigma$ are (compact) infra-$N$ manifolds. If also 
the base $W/\pi$ is a manifold then all fibers of $\sigma$ 
are naturally diffeomorphic to $N/\Delta$, where $\Delta \leq \pi$ 
is the normal subgroup of $\pi$ which acts trivially on $W$.
In this situation, since $\pi$ normalizes $N$ and $\Delta$, the Seifert bundle map $\sigma$ defines a fibration whose 
structure group is discrete and contained in $\Aff(N/\Delta)$.
In particular, $\sigma$ is an infra-$N$ bundle with fiber $N/\Delta$
in the sense of Definition \ref{def:infraNb}. In general, a Seifert fibering does \emph{not} give a locally trivial fiber bundle over the orbifold $W/\pi$, but induces the structure of an 
\emph{orbibundle} over $W/\pi$. See \cite{LR_3} for various examples.

\subsubsection{Seifert structures on manifolds and orbifolds}

\begin{definition} \label{def:seifertstructure}
Let $M$ be a smooth orbifold and $(X,N,\pi)$ a 
Seifert fiber space.
A \emph{Seifert fiber structure} on $M$ is a diffeomorphism 
$$ \varphi : X/\pi \; \lra \, M \,. $$ 
\end{definition}

\subsection{Seifert fiber spaces with nil-geometry} 
\label{sect:Seifert_rigidity}
In this subsection 
let $N$ denote a simply connected nilpotent Lie group.
A \emph{lattice} in $N$ is a discrete uniform subgroup
of $N$.

\subsubsection{Existence} 

Let $\Gamma$ be a finitely generated torsion-free 
nilpotent group and 
\begin{equation}  \label{eq:ext1}
1 \lra \Gamma \lra \pi \lra \Theta \lra 1 \end{equation}
an exact sequence of groups. 

\begin{definition} A Seifert fiber space 
$(X,N,\pi)$
is said to realize the group extension \eqref{eq:ext1} 
if $\rho(\Gamma)$ is a lattice of  $N$.  
\end{definition}

{\em Remark.} In this situation, $\Gamma$ is a subgroup
of finite index in $\pi_{N} = \rho^{-1}(N)$, and therefore the induced action of 
$\Theta$ on $W = X/N$ is  properly discontinuous.

\begin{theorem}[{\cite[\S 2.2]{KLR}}] Assume that $\Theta$ acts properly discontinuously on a simply connected manifold $W$ and let $N$ be a simply connected nilpotent Lie group which contains $\Gamma$ as a lattice. 
Then there exists a Seifert fiber space $(X,N,\pi)$,  
which realizes the group extension 
\eqref{eq:ext1} 
 and  which induces the given action of $\Theta$ on $W$. 
\end{theorem}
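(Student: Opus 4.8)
The plan is to realize the extension \eqref{eq:ext1} by an explicit, properly discontinuous action of $\pi$ on the product $X = W \times N$, sitting inside the group of fiber-preserving diffeomorphisms that normalize the left translations of $N$; the quotient $X/\pi$ is then the desired Seifert bundle over $W/\Theta$. Let $N$ act on $X$ by $a\cdot(w,n) = (w, an)$, and let $\cM$ denote the group of diffeomorphisms of the form $(w,n)\mapsto (h\cdot w,\, \theta(w)\, n)$ with $h \in \Diff(W)$ and $\theta\colon W \to \Aff(N)$ smooth. By the normalizer description of $\Aff(N)$ recalled above, $\cM$ is exactly the fiber-preserving part of the normalizer $\Diff(X,N)$, and it fits into a short exact sequence $1 \to \mathrm{Map}(W,N) \to \cM \to \mathrm{Map}(W,\Aut(N))\rtimes \Diff(W) \to 1$, the kernel being the vertical translations. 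Constructing the Seifert action then amounts to producing a homomorphism $\rho\colon \pi \to \cM$ that restricts to the lattice inclusion $\Gamma \hookrightarrow N$ on $\Gamma$ and whose image in $\Diff(W)$ is the given $\Theta$-action.

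First I would encode the extension data. The extension \eqref{eq:ext1} determines an abstract kernel $\psi\colon \Theta \to \mathrm{Out}(\Gamma)$, and by Malcev rigidity every automorphism of the lattice $\Gamma$ extends uniquely to one of $N$, so that $\Aut(\Gamma)\hookrightarrow \Aut(N)$ and $\psi$ induces a compatible outer action on $N$. Choosing a set-theoretic section $s\colon \Theta \to \pi$ then records a conjugation map $\Theta \to \Aut(\Gamma) \subset \Aut(N)$ together with a factor set taking values in $\Gamma \subset N$. Combined with the given $\Theta$-action on $W$, this is precisely the data needed to attempt to build $\rho$, and the problem is reduced to a lifting problem in $\cM$ governed by the cohomology of $\Theta$.

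The construction then proceeds by induction on the nilpotency class of $N$. Filtering $N$ by its upper central series $1 = N_0 \trianglelefteq N_1 \trianglelefteq \cdots \trianglelefteq N_c = N$, each successive quotient $N_i/N_{i-1}$ is a central vector group $\cong \bbR^{k_i}$, and $\Gamma_i = \Gamma \cap N_i$ is a lattice in $N_i$. Assuming the action has been constructed modulo $N_{i-1}$, the lift over the central quotient is an instance of the abelian Seifert (Conner--Raymond) construction, whose existence and obstruction are controlled by the groups $H^{3}(\Theta; \mathrm{Map}(W,\bbR^{k_i}))$ and $H^{2}(\Theta; \mathrm{Map}(W,\bbR^{k_i}))$.

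I expect the crux of the argument to be exactly this abelian step. Here one must show that the obstruction in $H^{3}$ to realizing the abstract kernel vanishes and that the extension class determined by the integral (lattice) data in $H^{2}(\Theta;\mathrm{Map}(W,\bbR^{k_i}))$ is realized by an actual affine action on the fibers. The decisive point is that the coefficient module $\mathrm{Map}(W,\bbR^{k_i})$ is a real vector space, hence divisible and torsion-free: this lets one pass from the integral class over $\Gamma_i \cong \bbZ^{k_i}$ to a real class and realize it, and it forces the relevant obstructions to vanish as in the Conner--Raymond theory. Once $\rho$ is assembled, it remains to verify that the $\pi$-action is properly discontinuous, which follows because the fibers $N/\Gamma$ are compact (as $\Gamma$ is a uniform lattice) while the induced action of $\Theta = \pi/\Gamma$ on $W$ is properly discontinuous by hypothesis. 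Hence $(X,N,\pi)$ is a Seifert fiber space realizing \eqref{eq:ext1} and inducing the given action of $\Theta$ on $W$.
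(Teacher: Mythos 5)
The paper itself gives no proof of this statement---it is quoted verbatim from \cite[\S 2.2]{KLR}---so your proposal can only be measured against the known Kamishima--Lee--Raymond argument. Your overall architecture does match theirs: take $X=W\times N$, work inside the group of fiber-preserving diffeomorphisms $(w,n)\mapsto(h\cdot w,\theta(w)n)$ normalizing the left $N$-action, use Malcev rigidity to promote the conjugation action of $\pi$ on $\Gamma$ to $\Aut(N)$, and reduce the construction of $\rho\colon\pi\to\cM$ to a lifting problem solved by induction up the (upper) central series of $N$, with the abelian step controlled by group cohomology of $\Theta$ with coefficients in $\mathrm{Map}(W,\bbR^{k_i})$. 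The final verification of proper discontinuity (compact fiber $N/\Gamma$ over a properly discontinuous $\Theta$-action on $W$) is also correct. One small over-complication: since the extension \eqref{eq:ext1} is given as an actual group $\pi$, the abstract kernel is already realized and the Eilenberg--MacLane obstruction in $H^{3}$ is vacuously zero; the only real issue is the $H^{2}$ lifting problem.

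The genuine gap is in the justification of that crucial step. You assert that the obstructions in $H^{2}(\Theta;\mathrm{Map}(W,\bbR^{k_i}))$ vanish ``because the coefficient module is a real vector space, hence divisible and torsion-free.'' Divisibility and torsion-freeness of the coefficients do not force group cohomology to vanish: already $H^{1}(\bbZ;\bbR)=\bbR\neq 0$ and $H^{2}(\bbZ^{2};\bbR)=\bbR\neq 0$ for the trivial action. The correct and essential point in Conner--Raymond and in \cite{KLR} is that $\mathrm{Map}(W,\bbR^{k})$, with the $\Theta$-module structure induced by the \emph{properly discontinuous} action of $\Theta$ on $W$, is cohomologically trivial in positive degrees: one builds a $\Theta$-equivariant partition of unity on $W$ subordinate to the orbit structure (for a free action this exhibits the module as coinduced, and Shapiro's lemma applies; in general the partition of unity yields an explicit cochain-averaging contraction). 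This is precisely where the hypothesis that $\Theta$ acts properly discontinuously on $W$ enters the existence proof---in your write-up that hypothesis is only invoked at the very end for proper discontinuity of $\rho(\pi)$, which is a sign the key mechanism is missing. Divisibility is only needed for the subsidiary step of mapping the integral extension class along $\bbZ^{k_i}\hookrightarrow\mathrm{Map}(W,\bbR^{k_i})$; the vanishing theorem for $H^{i}(\Theta;\mathrm{Map}(W,\bbR^{k_i}))$, $i\geq 1$, must be supplied separately, and without it the induction does not close.
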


\subsubsection{Rigidity}
%
Let $N$ and $N'$ be simply connected nilpotent Lie groups,
$(X, N, \pi)$ and $(X', N', \pi')$ Seifert fiber spaces, and
$\phi: \pi \lra \pi'$ a homomorphism of groups.


\begin{definition} 
An equivariant diffeomorphism of actions $$ (f, \phi): (X, \pi)
\longrightarrow  (X', \pi')$$ is called an \emph{affine equivalence} of Seifert
actions if $ f N f^{-1} = N'$.
\end{definition}

%
Now consider an isomorphism of 
group extensions of the form 
\begin{equation}\label{eq:seifertiso}
\begin{CD}
1 @>>> \Gamma @>>> \pi @>>>  \Theta  @>>> 1\\
 @VVV   @VV{\phi_1}V @VV{\phi}V @VV{\bar \phi_{}}V @VVV \\
1 @>>> \Gamma'  @>>> \pi' @>>>  \Theta' @>>> 1
\end{CD} \; \, \; \;  \;  .
\end{equation}

\hspace{1cm}\\
The Seifert rigidity for nil-\-geometry
may be stated as follows:

\begin{theorem}[{\cite[\S 2.4]{KLR}}] \label{thm:nilrigidity} 
Suppose $(X,N, \pi)$ and $(X',N',\pi')$ are Seifert fiber spaces
which realize the group extensions in diagram \eqref{eq:seifertiso}.
If 
$$(\bar{f}, \bar \phi) : (W, \Theta)
\ra (W', \Theta') \; ,  $$  
is 
an equivariant diffeomorphism 
then there exists a lift of $\bar f$ 
to an affine equivalence of Seifert actions %
$$
 (f, \phi):   ( X, \pi)    \ra   (X', \pi')  \, .
$$
%
\end{theorem}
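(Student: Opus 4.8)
\emph{Proof proposal.} The plan is to realize the lift $f$ explicitly in a trivialization and thereby reduce the theorem to a single equivariant cohomology problem with nilpotent coefficients. First, since $\Gamma$ and $\Gamma'$ are lattices in the simply connected nilpotent Lie groups $N$ and $N'$, Mal'cev rigidity (cf.\ \cite{Raghunathan}) extends the isomorphism $\phi_1\colon \Gamma \ra \Gamma'$ uniquely to a Lie group isomorphism $\Phi\colon N \ra N'$. Because $N$ and $N'$ are contractible, the principal bundles $X \ra W$ and $X' \ra W'$ are trivial, so I would fix trivializations $X \cong W \times N$ and $X' \cong W' \times N'$ in which $N$ and $N'$ act by left translation on the second factor. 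In these coordinates every element of $\Diff(X,N)$ takes the form $(w,n) \mapsto (\bar g(w),\, g_*(n)\,\tau_g(w))$ with $\bar g \in \Diff(W)$, $g_* \in \Aut(N)$, and $\tau_g\colon W \ra N$ smooth; thus the Seifert action $\rho$ is encoded by a holonomy homomorphism $\gamma \mapsto \gamma_* \in \Aut(N)$ together with a translational cochain $\gamma \mapsto \tau_\gamma$, and likewise for $\rho'$.

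Next I would search for $f$ in the form $f(w,n) = (\bar f(w),\, \Phi(n)\,\eta(w))$ with $\eta\colon W \ra N'$ smooth to be determined. A map of this shape automatically conjugates the $N$-action to the $N'$-action via $\Phi$, so $f N f^{-1} = N'$, and $f$ is a diffeomorphism the moment $\eta$ exists; hence the only content lies in $\pi$-equivariance. Writing out $f \circ \rho(\gamma) = \rho'(\phi(\gamma)) \circ f$ separates into three conditions: the base identity $\bar f\,\bar\gamma = \overline{\phi(\gamma)}\,\bar f$, which is exactly the given equivariance of $(\bar f,\bar\phi)$; the holonomy identity $\Phi\,\gamma_* = \phi(\gamma)_*\,\Phi$; and a translational equation for $\eta$. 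The holonomy identity is automatic: the intrinsic homomorphism $\gamma \mapsto \gamma_*$ is nothing but the conjugation action of $\pi$ on $\Gamma \subset N$ (and of $\pi'$ on $\Gamma' \subset N'$), so commutativity of the diagram \eqref{eq:seifertiso} together with the uniqueness in Mal'cev rigidity forces $\Phi\,\gamma_* = \phi(\gamma)_*\,\Phi$ first on $\Gamma$ and hence on all of $N$.

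Everything therefore reduces to solving, for all $\gamma$, the single equation $\Phi(\tau_\gamma(w))\,\eta(\bar\gamma w) = \phi(\gamma)_*\!\big(\eta(w)\big)\,\tau'_{\phi(\gamma)}(\bar f w)$, which asks that $\eta$ intertwine the two translational cochains along the fiber direction. Finding such an $\eta$ simultaneously over all $\gamma$ is an obstruction problem in the \emph{nonabelian} equivariant cohomology $H^*\!\big(\Theta;\, C^\infty(W,N')\big)$, the coefficient module carrying the $\Theta$-action twisted through the holonomy. I would resolve it by induction up the upper central series of $N'$: each central extension $1 \ra Z' \ra N' \ra N'/Z' \ra 1$ with $Z' \cong \mathbb{R}^k$ produces an exact sequence of pointed cohomology sets, reducing the problem step by step to the abelian coefficient modules $C^\infty(W,\mathbb{R}^k)$.

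The decisive input is the vanishing $H^i\!\big(\Theta;\, C^\infty(W,V)\big) = 0$ for every finite-dimensional real $\Theta$-module $V$ and all $i \geq 1$: since $\Theta$ acts properly discontinuously on $W$, the orbit sums are locally finite, and a smooth partition of unity with $\sum_{\delta\in\Theta} u(\delta^{-1}w) \equiv 1$ yields an averaging operator that is a contracting homotopy of the cochain complex. This annihilates both the obstruction in degree two and the indeterminacy in degree one at every stage of the central-series induction, delivering the smooth $\eta$ and hence the affine equivalence $(f,\phi)$. I expect the main obstacle to be the bookkeeping of this nonabelian induction: keeping track of how the holonomy twist alters the coefficient module at each central quotient, verifying that the relevant obstruction class indeed lands in a positive-degree group so that the pointed-set exact sequences apply, and ensuring that the averaging is performed smoothly and $\Theta$-equivariantly at every step.
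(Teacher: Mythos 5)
The paper gives no proof of this theorem — it is quoted from Kamishima--Lee--Raymond \cite[\S 2.4]{KLR} — and your argument is essentially the proof given there: trivialize the principal $N$-bundle, extend $\phi_1$ to $\Phi:N\to N'$ by Mal'cev rigidity, and reduce the equivariance of the fiber-translation term to the vanishing of $H^i\bigl(\Theta;\, C^\infty(W,\mathbb{R}^k)\bigr)$ for $i\ge 1$ via partition-of-unity averaging over the properly discontinuous action, combined with induction up the central series of $N'$. Since this is exactly the approach of the cited source (with only the minor remark that for the rigidity/uniqueness statement the relevant obstruction at each central step sits in degree one, degree two being needed only for existence), there is nothing substantive to add.
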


Note that, as a
consequence, the Seifert rigidity constructs a
diffeomorphism of Seifert fiber spaces
$$ X/\pi \to X'/\pi' \; , $$ 
which restricts to affine (in particular, connection preserving) maps
on the infranil fibers.  

\subsection{Infranil bundles over aspherical manifolds}
\label{sect:infranilbundles}
Let $N$ be a  simply connected nilpotent Lie group.
In this subsection we relate the notions of infra-$N$
bundle and Seifert fiber space more closely. 
Recall that a manifold is called \emph{aspherical}
 if its universal covering
manifold is contractible.  
The following result asserts that over an
aspherical base the structure group of an infra-$N$ bundle
may be reduced to a discrete group. This implies
that \emph{infranil bundles over an aspherical base are
fiberwise affinely equivalent to a Seifert fiber space}.

%
%

\begin{proposition} \label{prop:isSeifert}
Let $f: M \ra B$ be an infra-$N$ bundle, where $B$ is 
an aspherical manifold and $N$ is a  simply connected nilpotent Lie group. Let $\pi = \pi_{1}(M)$ and assume also that $f$ 
has compact fiber $N/\Delta$.  Then there exists 
a Seifert fiber space $(X, N, \pi)$ which admits
a fiberwise affine diffeomorphism $$ \varphi: X/\pi \lra M$$  
to the infra-$N$ bundle $M$.
\end{proposition}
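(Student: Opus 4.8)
The plan is to recover the data defining a Seifert fiber space from $\pi = \pi_1(M)$, to build a model via the existence theorem of \cite{KLR}, and then to match this model to $M$ by a reduction of the structure group followed by Seifert rigidity.

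First I would extract the group extension. Since the fiber $N/\Delta$ is infranil it is aspherical with fundamental group $\Delta$, and since $B$ is aspherical the homotopy sequence of $N/\Delta \ra M \ra B$ collapses to $1 \ra \Delta \ra \pi \ra \Theta_{B} \ra 1$ with $\Theta_{B} = \pi_{1}(B)$; in particular $M$ is aspherical. Inside $\Delta$ the subgroup $\Gamma = N \cap \Delta$ is the maximal normal nilpotent subgroup (Bieberbach), hence characteristic in $\Delta$ and therefore normal in $\pi$, and it is a lattice in $N$ of finite index in $\Delta$. Setting $\Theta = \pi/\Gamma$ and $W = \tilde B$, the induced $\Theta$-action on $W$ factors through the deck action of $\Theta_{B}$ with finite kernel $\Delta/\Gamma$, so it is properly discontinuous. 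This yields exactly the data $1 \ra \Gamma \ra \pi \ra \Theta \ra 1$ together with a proper $\Theta$-action on the simply connected $W$.

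Next I would feed this data to the existence theorem to obtain a Seifert fiber space $(X,N,\pi)$ whose quotient $S = X/\pi$ fibers over $W/\Theta = B$ with infranil fiber $N/\Delta$; since $S$ is a closed manifold (as $\pi$ is torsion free) over the manifold base $B$, the remark on Seifert bundle maps shows $S$ is itself an infra-$N$ bundle over $B$. It remains to produce a fiberwise affine diffeomorphism $\varphi \colon S \ra M$, and I would get it by recognizing $M$ as a Seifert fiber space realizing the same data. Concretely, pulling the bundle back to the contractible $W = \tilde B$ trivializes it, so $\tilde M \cong \tilde B \times N$ and $\pi$ acts by $\gamma \cdot (b,n) = (\bar\gamma b,\, h_{\gamma}(b)\, n)$ with a cocycle $h_{\gamma}(b) \in N_{\Aff(N)}(\Delta) \subseteq \Aff(N) = N \rtimes \Aut(N)$. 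The image of $h_{\gamma}(b)$ in $\mathrm{Out}(N)$ preserves $\Gamma$, hence lies in a discrete group and is constant in $b$; the only variation sits in the translational and inner-automorphism parts, which together generate a connected nilpotent subgroup.

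The main obstacle is precisely this final reduction: to gauge the remaining variation away $\pi$-equivariantly over $\tilde B$, making the cocycle constant and thereby turning the left $N$-action on $\tilde B \times N$ into one normalized by $\pi$ — that is, exhibiting the compatible $N$-action presenting $\tilde M$ as the space of a Seifert fiber space $(X_{0},N,\pi)$. I would carry this out inductively along the ascending central series of $N$: at each stage the problem becomes an abelian, $\bbR^{d}$-valued equivariant coboundary problem over the contractible $\tilde B$, which is solvable, the discreteness of the outer part being what confines all obstructions to the contractible directions where they vanish. With $M$ thus presented as a Seifert fiber space realizing $1 \ra \Gamma \ra \pi \ra \Theta \ra 1$ with the same $\Theta$-action on $W$, Theorem \ref{thm:nilrigidity}, applied to the identity isomorphism of extensions and the identity on $(W,\Theta)$, lifts to an affine equivalence $(f,\mathrm{id}) \colon (X,\pi) \ra (X_{0},\pi)$, which descends to the desired fiberwise affine diffeomorphism $\varphi \colon S = X/\pi \ra X_{0}/\pi = M$.
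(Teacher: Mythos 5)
Your setup — extracting the extension $1 \to \Delta \to \pi \to \pi_1(B) \to 1$, pulling the bundle back to the contractible $W = \tilde B$, trivializing it fiberwise affinely by the covering homotopy theorem, and lifting to get a fiberwise affine $\pi$-action on $X = N \times W$ — is exactly the paper's route. But you then misidentify what remains to be shown, and the step you yourself call ``the main obstacle'' is where the gap lies. To exhibit $(X,N,\pi)$ as a Seifert fiber space you do \emph{not} need to make the cocycle $h_\gamma(b)$ constant in $b$; you only need $\pi$ to normalize the left $N$-action. Writing $h_\gamma(b) = (t_\gamma(b), A_\gamma(b)) \in N \rtimes \Aut(N)$, conjugation by $\gamma$ sends the left translation $L_{n_0}$ on the fiber over $b$ to left translation by $\ell_b(n_0) := t_\gamma(b)\,A_\gamma(b)(n_0)\,t_\gamma(b)^{-1}$ on the fiber over $\bar\gamma b$, so the only requirement is that the automorphism $\ell_b \in \Aut(N)$ be independent of $b$. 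Since $\gamma$ normalizes the lattice $\Delta_0 = \Delta \cap N$ (which acts by \emph{global} left translations), $\ell_b$ restricts on $\Delta_0$ to a fixed automorphism of $\Delta_0$, and Malcev rigidity says an automorphism of $N$ is determined by its restriction to a lattice; hence $\ell_b$ is constant and $\pi \leq \Diff(N\times W, N)$ with no modification of the action whatsoever. This is precisely the paper's Lemma \ref{lem:normalizers}. Note that your remark about the image in $\mathrm{Out}(N)$ is not the right invariant: it is the full automorphism $\mathrm{Inn}(t_\gamma(b)) \circ A_\gamma(b)$, not its outer class, whose constancy is needed, and it is that composite (not $A_\gamma(b)$ alone) which preserves $\Delta_0$.

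By contrast, the inductive coboundary argument you sketch is both stronger than necessary and not actually established. The assertion that each stage is ``an abelian, $\bbR^d$-valued equivariant coboundary problem over the contractible $\tilde B$, which is solvable'' is doing all the work: the relevant group is $H^1(\pi; C^\infty(W,\bbR^d))$ for a suitable module structure, and since the normal subgroup $\Delta \leq \pi$ acts \emph{trivially} on $W$ (it acts only in the fiber direction), the usual partition-of-unity/coinduction argument for vanishing does not apply directly; contributions of the type $H^1(\Delta;\bbR^d)$ enter and these do not vanish for nilpotent $\Delta$. So as written this step is a genuine gap. Once the Malcev argument is substituted, your remaining machinery (the KLR existence theorem plus Seifert rigidity to compare with $M$) becomes redundant: $(N\times W, N, \pi)$ \emph{is} the desired Seifert fiber space and $\tilde\varphi$ descends to the required fiberwise affine diffeomorphism, which is how the paper concludes.
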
 
%
%
\begin{proof} By the long exact homotopy sequence of  the
fibration $f$ we have an exact sequence 
$$  1 \ra \Delta \ra \pi_{1}(M) \ra \Theta \ra 1$$
where $\Theta \cong \pi_{1}(B)$. 
Let $\mathsf{p}: W \ra B$ be the universal covering of $B$,
and $\bar f: M' \ra W$ the induced infra-$N$ bundle. 
The induced morphism of bundles $M' \ra M$ is a covering; its group of deck transformations is isomorphic to $\Theta$ and it 
acts by fiberwise affine maps on $\bar f: M' \ra W$. 

Since $W$ is contractible, the covering homotopy theorem
implies that $\bar f: M' \ra W$ is $G$-equivalent to 
the product bundle $N/\Delta \times W$, where $G = \Aff(N/\Delta)$ is the structure group of $\bar f$. That is,
there exists a  fiberwise affine diffeomorphism 
$$  N/\Delta \times W \to M' $$ of bundles over 
$W$. 
This map lifts to a diffeomorphism 
$$ \tilde \varphi: X \to  \tilde M  \,   $$
of $X =  N \times W$ to the universal cover 
$\tilde M$ of $M$. Via $\tilde \varphi$, 
the group $\pi = \pi_{1}(M)$ has an induced  
fiberwise affine action on the (trivial) $N$-principal 
bundle $X= N \times W$. 
Let $\Delta_{0}$ be the maximal
nilpotent normal subgroup of $\Delta$. Then 
$\Delta_{0}$  is normal in $\pi$, and 
also $\Delta_{0} = \Delta \cap N$ is a lattice in $N$. 
Since $\pi$ normalizes the lattice $\Delta_{0}$, the action of $\pi$ on $X$ being 
fiberwise affine, Lemma \ref{lem:normalizers} implies that $\pi$ normalizes $N$, that is, $\pi \leq \Diff(N \times W,N)$. Therefore, the data $(N \times W, N, \pi)$ define a  Seifert fiber space which is 
affinely equivalent to the infra-$N$ bundle $M$. 
\end{proof}

We remark:
\begin{lemma} \label{lem:normalizers}
Let $(X,N)$ be a principal bundle and $\Delta_{0} \leq N$ a lattice in $N$. Let $N_{\Diff(X)}(\Delta_{0})$ denote the normalizer of the left action of $\Delta_{0}$ in $\Diff(X)$. 
Then $$  \Aff(X,N) \cap N_{\Diff(X)}(\Delta_{0}) \, \leq \; \Diff(X,N) \; . $$
\end{lemma}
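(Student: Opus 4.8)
The plan is to conjugate the $N$-action by a given $\varphi \in \Aff(X,N)\cap N_{\Diff(X)}(\Delta_{0})$ fibrewise, and then to globalise using the rigidity of lattices in nilpotent groups. Throughout, $N$ is the simply connected nilpotent Lie group of the present subsection, $W=X/N$, and for $a\in N$ I write $L_{a}\in\Diff(X)$ for the translation given by the $N$-action. First I would record the fibrewise picture. Since $\varphi$ is a bundle map it covers a diffeomorphism $\bar\varphi$ of $W$, and since the $N$-action moves points within fibres, $\varphi L_{a}\varphi^{-1}$ covers $\bar\varphi\circ\mathrm{id}_{W}\circ\bar\varphi^{-1}=\mathrm{id}_{W}$ and hence preserves every fibre. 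On each fibre $\varphi$ restricts to an affine (i.e.\ $\ncan$-preserving) isomorphism of $N$-torsors, and as translations are themselves affine, the restriction of $\varphi L_{a}\varphi^{-1}$ to the fibre $F_{\bar\varphi(w)}$ over $\bar\varphi(w)$ is again affine. Because the translation subgroup $N$ is normal in $\Aff(N)=N\rtimes\Aut(N)$, with conjugation acting through $\Aut(N)$, this restriction is once more a translation: writing $\varphi|_{F_{w}}$ as $x\mapsto n_{w}\alpha_{w}(x)$ in a local trivialisation ($n_{w}\in N$, $\alpha_{w}\in\Aut(N)$), a direct computation in $\Aff(N)$ gives that $\varphi L_{a}\varphi^{-1}$ acts on $F_{\bar\varphi(w)}$ as $L_{\beta_{w}(a)}$, where $\beta_{w}=c_{n_{w}}\circ\alpha_{w}\in\Aut(N)$ and $c_{n_{w}}$ is conjugation by $n_{w}$. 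The element $\beta_{w}(a)$ is in fact intrinsic --- the unique translation of $F_{\bar\varphi(w)}$ agreeing with $\varphi L_{a}\varphi^{-1}$ there --- so no global trivialisation is needed.

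Next I would feed in the hypothesis $\varphi\Delta_{0}\varphi^{-1}=\Delta_{0}$. For $\gamma\in\Delta_{0}$ this gives $\psi(\gamma)\in\Delta_{0}$ with $\varphi L_{\gamma}\varphi^{-1}=L_{\psi(\gamma)}$, and $L_{\psi(\gamma)}$ acts by the single translation $\psi(\gamma)$ on every fibre. Comparing with the previous step, $\beta_{w}(\gamma)=\psi(\gamma)$ for all $w$ and all $\gamma\in\Delta_{0}$; that is, $\beta_{w}|_{\Delta_{0}}$ is independent of $w$.

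The crucial step is to upgrade this to independence of $\beta_{w}$ itself. This is precisely the rigidity of automorphisms of a simply connected nilpotent Lie group: such an automorphism is determined by its restriction to any lattice. Indeed, passing to $\mathfrak{n}=\mathrm{Lie}(N)$ through $\exp$, the logarithms of the elements of $\Delta_{0}$ span $\mathfrak{n}$ over $\mathbb{R}$ (Mal'cev), so two automorphisms that agree on $\Delta_{0}$ have equal differentials and therefore coincide. Applying this to $\beta_{w}$ and $\beta_{w'}$, which agree on $\Delta_{0}$ by the second step, produces a single $\beta\in\Aut(N)$ with $\beta_{w}=\beta$ for all $w$. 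Consequently $\varphi L_{a}\varphi^{-1}=L_{\beta(a)}$ on every fibre, hence globally, for every $a\in N$; and since $\beta$ is an automorphism, $\varphi N\varphi^{-1}=N$, which is the claim $\varphi\in\Diff(X,N)$.

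I expect the main difficulty to lie not in the rigidity input, which is standard, but in the first step: making the fibrewise normal form of $\varphi$ and the identification of the conjugated translation clean and trivialisation-independent. The normality of $N$ in $\Aff(N)$ is what resolves this, since it guarantees a priori that $\varphi L_{a}\varphi^{-1}$ is fibrewise a translation and identifies the conjugation with the $\Aut(N)$-valued assignment $w\mapsto\beta_{w}$; the whole argument then reduces to the observation that this assignment is constant once it is constant on the lattice.
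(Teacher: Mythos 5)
Your argument is correct and follows essentially the same route as the paper's proof: write $\varphi$ fibrewise in affine normal form, observe that conjugating the $N$-translations produces a family of automorphisms $\beta_{w}\in\Aut(N)$, use the normalization of $\Delta_{0}$ to fix $\beta_{w}$ on the lattice, and invoke Mal'cev rigidity to conclude that $\beta_{w}$ is constant, hence $\varphi$ normalizes $N$. Your additional care about trivialisation-independence of $\beta_{w}(a)$ is a welcome elaboration of a point the paper passes over by simply assuming $X=N\times W$.
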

\begin{proof} Using local trivializations we may assume that $X = N \times W$ is a trivial bundle. Every element $\gamma$ of $\Aff(X,N)$, being a fiberwise affine diffeomorphism of $(X,N)$, gives rise to a 
family of automorphisms $\ell_{w} \in \Aut(N)$, $w \in W$,
which is defined by the property that $$ \gamma n \gamma^{-1} (v, w) = (\ell_{w}(n) \, v, \bar \gamma w) \; \, \;\text{,  for all $n,v \in N$} . $$
(Here $\bar \gamma: W \ra W$ denotes the map which $\gamma$ induces on $W$.)  If $\gamma$ normalizes 
$\Delta_{0}$ then, by Malcev rigidity of the lattice $\Delta_{0}$, $\ell_{w} = \ell$ 
is constant, where $\ell \in \Aut(N)$ is the unique extension of 
the automorphism which $\gamma$ induces on $\Delta_{0}$.
Therefore $\gamma$ normalizes $N$.  
\end{proof}

In our context, Seifert fiber spaces may be viewed as infra-$N$ bundles with singular fibers: 

\begin{proposition} \label{prop:QuotisSeifert}
Let $f: M_{0} \ra B$ be an infra-$N$ bundle over the aspherical base manifold $B$, where $N$ is a  simply connected nilpotent Lie group. Assume that $f$ has  compact fiber $N/\Delta$ and let
$\mu \leq \Aff(M_{0}, N/\Delta)$ be a finite group of fiberwise affine diffeomorphisms. Put $$ M = M_{0} / \mu \; . $$
Let $\pi  = \pi_{1}(M)$ be the orbifold fundamental group of 
$M$ and $\pi_{0} = \pi_{1}(M_{0})$.  
Then there exists 
a Seifert fiber space $(X, N, \pi)$ over the base orbifold $B/\mu$, a Seifert fiber structure $ \varphi: X/\pi \lra M $ and a commutative
diagram \begin{equation*} \label{eq:quottiso}
\begin{CD}
X/\pi_{0} @>>{\varphi_{0}}> M_{0} @>>> B\\
 @VVV   @VVV  @VVV\\
X/\pi  @>>{\varphi}> M  @>>> B/\mu
\end{CD} \; \, \; \;  \;  ,
\end{equation*}
where $\varphi_{0}$ is a fiberwise affine diffeomorphism of bundles over $B$.
\end{proposition}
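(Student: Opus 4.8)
The plan is to reduce Proposition \ref{prop:QuotisSeifert} to the already-established manifold case in Proposition \ref{prop:isSeifert} by passing to the finite cover $M_{0} \ra M$ and then descending the Seifert structure along the $\mu$-action. First I would apply Proposition \ref{prop:isSeifert} to the infra-$N$ bundle $f: M_{0} \ra B$, obtaining a Seifert fiber space $(N \times W, N, \pi_{0})$ together with a fiberwise affine diffeomorphism $\varphi_{0}: X/\pi_{0} \ra M_{0}$ over $B$, where $W \ra B$ is the universal cover and $\pi_{0} = \pi_{1}(M_{0})$ acts fiberwise affinely on $X = N \times W$ normalizing $N$. This gives the top row of the desired diagram. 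The task is then to incorporate the finite group $\mu \leq \Aff(M_{0}, N/\Delta)$ so that the quotient $M = M_{0}/\mu$ acquires a compatible Seifert structure over the orbifold $B/\mu$.

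Next I would analyze how $\mu$ acts on the picture. Since each element of $\mu$ is a fiberwise affine diffeomorphism of $M_{0}$, it covers a diffeomorphism of the base $B$, and (shrinking $\mu$ or lifting if necessary) these base maps generate a finite group acting on $B$; thus $B/\mu$ is a well-defined orbifold and the composite $M_{0} \ra B \ra B/\mu$ factors through $M = M_{0}/\mu$. The key structural point is that $\mu$ lifts to a group of fiberwise affine transformations of the covering space $X/\pi_{0} \cong M_{0}$, and hence to transformations of $X = N \times W$ that normalize both the $N$-action and the $\pi_{0}$-action. Concretely, I would define $\pi$ as the group of all lifts to $X$ of the combined action of $\pi_{0}$ and $\mu$ on $M_{0}$; this sits in an extension $1 \ra \pi_{0} \ra \pi \ra \mu \ra 1$, and since the orbifold fundamental group of $M = M_{0}/\mu$ is precisely such a lift group, one gets $\pi \cong \pi_{1}(M)$ as claimed. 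Because all generators act by maps normalizing $N$ (here Lemma \ref{lem:normalizers} again controls why normalizing the relevant lattice forces normalizing $N$), the enlarged group $\pi$ still lies in $\Diff(X, N)$, so $(X, N, \pi)$ is a Seifert fiber space realizing the new extension, now fibering over $W/\pi = (B/\mu)$.

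The commutative diagram then assembles from the inclusion $\pi_{0} \leq \pi$: the left vertical maps are the covering projections $X/\pi_{0} \ra X/\pi$ and $M_{0} \ra M$, the right vertical is $B \ra B/\mu$, and $\varphi: X/\pi \ra M$ is the map induced by $\varphi_{0}$ on the $\mu$-quotients, which is well-defined precisely because $\varphi_{0}$ was chosen $\pi_{0}$-equivariant and the lifts of $\mu$ intertwine the two actions. Naturality of $\varphi_{0}$ over $B$ propagates to commutativity of the full square.

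The main obstacle I anticipate is verifying that $\mu$ genuinely lifts to fiberwise affine transformations of the universal cover $X = N \times W$ normalizing the $\pi_{0}$-action, rather than merely acting on $M_{0}$; this is where one must be careful about the orbifold fundamental group $\pi = \pi_{1}(M)$ and ensure the extension $1 \ra \pi_{0} \ra \pi \ra \mu \ra 1$ is realized by genuine affine maps on $X$. The delicate point is that finite affine symmetries need not act freely on $W$, so $B/\mu$ is an orbifold and the induced $\Theta = \pi/(\pi \cap N)$-action on $W$ may have nontrivial stabilizers; one must check that condition (2) of Definition \ref{def:Seifert} (proper discontinuity of the $\Theta$-action on $W$) still holds, which follows from the finiteness of $\mu$ together with the proper discontinuity already present for $\pi_{0}$. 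Once the lifts are in hand, invoking Lemma \ref{lem:normalizers} to confirm $\pi \leq \Diff(X, N)$ and checking the Seifert axioms is routine.
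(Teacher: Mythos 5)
Your proposal is correct and follows essentially the same route as the paper: apply Proposition \ref{prop:isSeifert} to the cover $f: M_{0} \ra B$, realize $\pi = \pi_{1}(M)$ as the group of lifts to $X$ extending the $\pi_{0}$-action and inducing $\mu$ on $M_{0}$, observe that this transported action is fiberwise affine because $\mu$ acts affinely on the fibers, and invoke Lemma \ref{lem:normalizers} to conclude $\pi \leq \Diff(X,N)$ and verify the Seifert axioms. The points you flag as delicate (lifting $\mu$ affinely and proper discontinuity of the induced action on $W$, the latter following from finiteness of $\mu$) are exactly the ones the paper's proof handles, in the same way.
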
 
\begin{proof}
Let $\pi = \pi_1(M)$. By construction, the group $\pi$ acts as a group of 
decktransformations of the universal covering 
$\tilde M  \ra M$ 
on $\tilde M$. This extends the corresponding action of
$\pi_{0}$, and also induces the action of $\mu$ on 
$M_{0} = \tilde M  / \pi_{0}$. Since the fibration $f$ is $\mu$-equivariant,
the decktransformation group $\Delta \leq \pi_{0}$ for the fiber of $f$ is normalized by $\pi$, and also the maximal nilpotent normal subgroup $\Delta_{0}$ is normalized by $\pi$.
Recall that $\Delta_{0}$ is a lattice in $N$. 

Proposition \ref{prop:isSeifert} constructs a Seifert fiber space 
$(X,N, \pi_{0})$ and a fiberwise affine 
diffeomorphism $\varphi_{0}$ as required. 
Let $ \tilde \varphi :  (X,N) \ra \tilde M$
be a lift of the diffeomorphism $\varphi_{0}$ to the universal covers. Then $ \tilde \varphi$ is an affine map of bundles. 
We may pull back the action of $\pi$ to an action on $X$. 
Since by iii) above, $\mu$ 
acts on the fibers of $f$ by affine transformations, 
the transported action of $\pi$ on $X$ 
is fiberwise affine for the bundle $(X, N)$.
That is, $\pi$ is contained in $\Aff(X,N)$.
Using Lemma \ref{lem:normalizers}, it follows that, a fortiori,  $ \pi$ 
is contained in $\Diff(X,N)$. Therefore, the
data $(X,N,\pi)$ satisfy the axioms of a Seifert action.
\end{proof}

\section{Collapsing and Seifert fiberings}


We consider a compact aspherical orbifold  $\cO$, where  
$$ {\cO} = Y / \Theta \; , $$  $Y$ is a contractible 
Riemannian manifold and $\Theta \leq \Isom(Y)$ acts 
properly discontinuously on $Y$. 
Assume further that $(M,g)$ is a closed Riemannian orbifold which \emph{collapses} to $\cO$ with bounded curvatures.  
This means that there
exists a sequence of Riemannian metrics $g_{i}$ on
$M$ with uniformly bounded sectional curvature 
such that the metric spaces $M_{i} = (M, g_{i})$
converge to $\cO$ in the Gromov-Hausdorff distance.
That is, we have 
$$ \lim_{i \to \infty} d_{\mathrm GH}(M_{i},\cO) = 0 \;  .
$$
For the Gromov-Hausdorff distance and its generalizations
see Appendix~A.

In this section,  we show 
that the orbifold $M$ carries a Seifert fiber structure 
over the base $\cO$. 
This proves Theorem \ref{thmA} in the introduction. 
In Section \ref{sect:proofofit} we also 
prove Corollary \ref{cor:irreducibletori}.

%

\subsection{Equivariant collapsing on finite covers}
Let $\Theta_{0}$
be  a torsion-free normal subgroup of $\Theta$, which
is of finite index. By our general assumption on orbifolds, 
such a subgroup of $\Theta$ always exists. 
Then  $$ T =   Y/ \Theta_{0}$$ is a closed Riemannian manifold  
and it is an orbifold covering space of $\cO$. Note that 
$\cO$ is a quotient of $T$ by the 
group of decktransformations 
of the map $T \ra \cO$. This is
a finite group isomorphic to 
$ \Theta/\Theta_{0}$.

\begin{proposition}  \label{pro:liftconvergence}
With respect to an appropiate choice of $ \Theta_{0}$,
there exists a sequence of 
Riemannian manifolds 
$\bar M_{i} = (\bar M,g_{i})$, Riemannian orbifold covering maps 
$$ \bar M_{i} = (\bar M,g_{i})  \, \lra  \, (M, g_{i}) , $$ 
 and a finite group $\mu$ which acts
 by isometries on $\bar M_{i}$ and $T = Y/\Theta_{0}$ 
such that 
\begin{enumerate}
\item 
$M_{i} = \bar M_{i} /\mu$  
and  $\cO =  T/ \mu$; 
\item 
the equivariant Hausdorff distance 
with respect to $\mu$ satisfies 
$$
 \lim_{i \to \infty} d_{\mu-\mathrm{GH}}(\bar M_{i},T) = 0 \;  .
$$
\end{enumerate}
\end{proposition}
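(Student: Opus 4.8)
The plan is to realize the singularity-resolving finite cover $T \to \cO$ as the geometric limit of a suitable finite cover $\bar M \to M$. Write $\pi = \pi_1^{\mathrm{orb}}(M)$ and recall that $\Theta = \pi_1^{\mathrm{orb}}(\cO)$, since $\cO = Y/\Theta$ is aspherical. The key first step is to extract from the collapse a \emph{surjective homomorphism} $\phi \colon \pi \to \Theta$ with \emph{torsion-free kernel}. Granting such a $\phi$, I would take for $\Theta_{0}$ any torsion-free normal subgroup of finite index in $\Theta$ (which exists by the standing assumption that some finite orbifold cover of $\cO$ is a manifold: pass to the normal core of a torsion-free finite-index subgroup), put $\mu = \Theta/\Theta_{0}$, and let $\bar M \to M$ be the orbifold cover associated with the finite-index normal subgroup $H = \phi^{-1}(\Theta_{0}) \trianglelefteq \pi$. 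Then $\mu \cong \pi/H$ is its deck group, so $\mu$ acts on $\bar M$ with $\bar M/\mu = M$; pulling back $g_{i}$ gives Riemannian orbifold coverings $\bar M_{i} \to M_{i}$ on which $\mu$ acts by isometries, while $\mu$ acts on $T = Y/\Theta_{0}$ by deck transformations with $T/\mu = \cO$. Because $\ker\phi$ and $\Theta_{0}$ are torsion-free, the extension $1 \to \ker\phi \to H \to \Theta_{0} \to 1$ forces $H$ to be torsion-free, so $H$ acts freely on the universal cover $\hat M$ and $\bar M = \hat M/H$ is a genuine manifold, as required.

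I would obtain $\phi$ and the torsion-freeness of its kernel from the equivariant convergence theory of the collapse (Appendix~A). Passing to universal covers $\hat M_{i} = (\hat M, \hat g_{i})$, on which $\pi$ acts by isometries, the short loops responsible for the collapse of $M_{i}$ unwrap, so the $\hat M_{i}$ do not themselves collapse; by Fukaya's equivariant Gromov--Hausdorff compactness one gets, after a subsequence and a choice of basepoints, pointed equivariant convergence $(\hat M_{i}, \pi) \to (\hat M_{\infty}, G)$ to a simply connected limit manifold $\hat M_{\infty}$ with a closed subgroup $G \le \Isom(\hat M_{\infty})$ satisfying $\hat M_{\infty}/G = \cO$. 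The Cheeger--Fukaya--Gromov structure of the limit supplies a simply connected nilpotent identity component $G^{0} \trianglelefteq G$ acting freely; since $G^{0} \cong \bbR^{k}$ is contractible, the quotient $\hat M_{\infty}/G^{0}$ is simply connected, hence equals $Y$, and $G/G^{0} \cong \Theta$. For large $i$ the convergence then provides a homomorphism $\phi \colon \pi \to G/G^{0} = \Theta$, onto because $\cO = Y/\Theta$, whose kernel is carried into the torsion-free group $G^{0}$ and is therefore torsion-free.

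It remains to verify $d_{\mu-\mathrm{GH}}(\bar M_{i}, T) \to 0$, and here I would simply descend the equivariant convergence $(\hat M_{i}, \pi) \to (\hat M_{\infty}, G)$ along the intermediate quotients. The subgroup $H \le \pi$ corresponds in the limit to the closed normal subgroup $\tilde H = G^{0} \cdot s(\Theta_{0}) \le G$ generated by $G^{0}$ and a lift of $\Theta_{0}$; by construction $\hat M_{\infty}/\tilde H = (\hat M_{\infty}/G^{0})/\Theta_{0} = Y/\Theta_{0} = T$ and $G/\tilde H = \Theta/\Theta_{0} = \mu$. Applying the standard functoriality of equivariant Gromov--Hausdorff convergence under quotients by nested normal subgroups to $\bar M_{i} = \hat M_{i}/H$ and $T = \hat M_{\infty}/\tilde H$, with the residual finite group $\mu = \pi/H = G/\tilde H$ acting, yields $(\bar M_{i}, \mu) \to (T, \mu)$, which is the asserted convergence.

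The hard part is the orbifold bookkeeping concealed in the limit group $G$. When $\cO$ carries genuine singularities, one must confirm that the $G^{0}$-action is free and that $\hat M_{\infty}/G^{0}$ is simply connected, so that $G/G^{0}$ equals $\Theta$ on the nose rather than surjecting onto it with a finite fudge factor; equivalently, that $\ker\phi$ meets the singular isotropy trivially, so that killing $\Theta_{0}$ resolves the singularities of $\bar M$ exactly while rendering the limit group finite. Making the passage to universal covers and the descent of equivariant convergence rigorous for Riemannian \emph{orbifolds}, rather than manifolds, is the technical heart, and this is precisely where the appendix's orbifold version of Fukaya's convergence theory must be brought to bear.
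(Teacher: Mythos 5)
Your overall strategy --- pass to universal covers, use equivariant Gromov--Hausdorff convergence to transport a torsion-free finite-index normal subgroup $\Theta_{0}\trianglelefteq\Theta$ back to a finite-index normal subgroup of $\pi$, and then descend the convergence to the intermediate quotients --- is the same as the paper's. But the proposal has a genuine gap at its central step: the surjective homomorphism $\phi\colon\pi\to\Theta$ with torsion-free kernel is asserted, not constructed. Equivariant Hausdorff approximations only furnish ``almost homomorphisms'' (the maps $\phi,\psi$ in Definition~A.2 are not homomorphisms and are only defined on bounded pieces of the groups), so the statement ``for large $i$ the convergence provides a homomorphism $\phi\colon\pi\to G/G^{0}$'' is exactly what needs proof. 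The tool that does this job is Proposition~A.8 (Fukaya--Yamaguchi): given $(X_i,\Gamma_i)\to(Y,\Lambda)$ and a suitable normal subgroup $\Lambda'\trianglelefteq\Lambda$, it produces normal subgroups $\Gamma_i'\trianglelefteq\Gamma_i$ with $(X_i,\Gamma_i')\to(Y,\Lambda')$ and $\Gamma_i/\Gamma_i'\cong\Lambda/\Lambda'$ --- note it yields the subgroup $H$ and the finite quotient $\mu$ directly, without ever producing a map onto all of $\Theta$. Crucially, A.8 requires the $\Gamma_i$ to act \emph{freely}; since $M$ may be an orbifold, $\pi$ can have torsion, and one must first pass to a torsion-free finite-index normal subgroup $\pi'\leq\pi$ and identify its limit group $\Theta'\leq\Theta$ (via Propositions~A.4--A.6 and the finite-group precompactness A.5) before A.8 can be applied to $\Theta_{0}\trianglelefteq\Theta'$. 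You skip this reduction, and it is also what secures torsion-freeness of the resulting subgroup $\pi_{0}$ (your alternative route --- $\ker\phi$ ``is carried into $G^{0}$'' --- is not an argument: there is no injection of $\ker\phi$ into $G^{0}$, and when $\pi$ has torsion the finite-order elements are precisely the ones that could die in $\Theta$).

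The second gap is the identification of $(Y,\Theta)$ inside the limit of the universal covers. You posit $(\hat M_i,\pi)\to(\hat M_\infty,G)$ with $G^{0}$ simply connected nilpotent (not $\mathbb{R}^{k}$ in general), acting freely, with $\hat M_\infty/G^{0}=Y$ and $G/G^{0}=\Theta$ with no finite kernel --- and then explicitly defer these points as ``the hard part.'' But these are not bookkeeping: freeness of the $G^{0}$-action, simple connectivity of $\hat M_\infty/G^{0}$, and the absence of a finite fudge factor in $G/G^{0}\to\Theta$ are precisely where an argument is required, and without them the homomorphism $\phi$ does not land in $\Theta$. The paper avoids this entire analysis by invoking Proposition~A.7 to obtain equivariant convergence to the prescribed presentation $(Y,\Theta,q)$ directly from the quotient convergence $M_i\to Y/\Theta$, and then never needs to discuss the identity component of a limit group. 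As written, your proposal is a plausible outline whose two load-bearing claims (the homomorphism $\phi$ and the structure of the limit group) are exactly the unproved ones.
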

\begin{proof} Let $X_{i}$ denote the universal cover of $M_{i}$. Then $M_{i} = X_{i}/ \pi$, where 
$\pi$ acts on $X_{i}$ as a discrete group of
isometries.
By Proposition A.7 in Appendix~A, there exists a sequence 
$(X_{i}, \pi, p_{i}) \in \mathcal{M}^{Groups}$ such that  
$$ \lim_{i \ra \infty} (X_{i}, \pi, p_{i}) = (Y, \Theta, q) $$
with respect to the equivariant Gromov-Hausdorff convergence with base point. Note  that
this implies (compare Definitions A.2, A.3.) $$ \lim_{i \ra \infty} (X_{i}, p_{i}) = (Y, q) \, . $$ 
Now let $\pi'$ be a torsion-free finite index normal subgroup of $\pi$. Using Proposition A.4, we 
infer that there exists a closed subgroup $\Theta' \leq \Isom(Y)$ such that (up to taking a subsequence) 
$$ \lim_{i \ra \infty} (X_{i}, \pi', p_{i}) = (Y, \Theta', q) \; . $$
In particular, by Proposition A.6 and taking into account that the
involved spaces are compact, we have 
$$\lim_{i \ra \infty} X_{i}/\pi'  = Y/\Theta' \,. $$ 
Let $\mu' = \pi/ \pi'$. Then $\mu'$ acts isometrically as a finite group of decktransformations of the maps $X_{i}/ \pi' \ra X_{i}/ \pi$ and  furthermore $$ X_{i} / \pi \, = \, (X_{i}/ \pi') / \mu'  \; . $$

%

Let $\cM(\mu')$ denote the space of isometry 
classes of compact metric spaces with isometric action of 
$\mu'$. Since $\mu'$ is finite, the equivariant version of the pre-compactness theorem holds 
with respect to $\cM(\mu')$ (compare Proposition A.5 in Appendix A). 
Therefore, 
after taking a subsequence,  we may assume  that there exists 
$$    \lim_{i \ra \infty}  \left(X_{i}/ \pi', \mu' \right) \, = \, ( \cO', \mu'  ) \; \in \cM(\mu'). $$
This of course implies that 
$$  Y/ \Theta =   \lim_{i \ra \infty} X_{i}/ \pi =\lim_{i \ra \infty}  (X_{i}/ \pi') / \mu'   \; =   \; \cO' /  \mu'  \;  . $$
We deduce, in particular, that $\cO' =  Y/ \Theta'$ is a finite orbifold covering space of $Y/\Theta$ and  $\Theta'$ is a normal subgroup of $\Theta$. 
We have thus constructed a sequence of finite covering 
manifolds $M'_{i} = X_{i}/ \pi'$ of $M_{i}$,  which converges to an orbifold covering space $\cO' =  Y/ \Theta'$ of $\cO$ in the Gromov-Hausdorff-distance. 

Observe that the action of $\pi'$ on $X_{i}$ is free, since
$\pi'$ is torsion-free. Therefore, we may apply Proposition A.8 in Appendix~A to a torsion-free finite index normal subgroup 
$\Theta_{0}$ of $\Theta'$, and infer that there exists a subgroup $\pi_0$ of $\pi'$ such that 
\begin{enumerate}
\item $\pi_{0}$ is normal and of finite index in $\pi'$,
and furthermore 
\item 
$ \lim_{i \ra \infty} (X_{i}, \pi_{0}, p_{i}) = (Y, \Theta_{0}, q) \; . 
$
\end{enumerate}
This gives rise to a sequence of Riemannian manifolds  
$$\bar M_{i} = X_{i}/ 
\pi_{0}$$ converging  to the manifold $T = Y/  \Theta_{0}$ in 
the Gromov-Hausdorff distance, that is,  
$$   \lim_{i \ra \infty} \bar M_{i}   \, = \, T \; .  $$ 
Note that, by passing to
a finite index subgroup of $\pi_{0}$, and arguing as in the
first part of the proof, we can furthermore assume that 
\begin{enumerate}
\item[(3)] $\pi_{0}$ is normal in $\pi$.
\end{enumerate}
Now let $\mu = \pi/ \pi_{0}$. Then $\mu$ acts on $\bar M_{i}$ by isometries and  $M_{i} =  \bar M_{i}/ \mu$. 
Furthermore, there  exists a sequence 
$(\bar M_{i}, \mu ) \in \mathcal{M}(\mu)$ such that  
$$ \lim_{i \ra \infty} (\bar M_{i}, \mu) = (T, \mu) \; . $$
In other words, $\lim_{i \to \infty} d_{\mu-\mathrm{GH}}(\bar M_{i},T) = 0$. This also implies ordinary Gromov-Hausdorff convergence of $\bar M_{i}/ \mu$ to  $T/\mu$. Since 
$$  \lim_{i \ra \infty} \bar M_{i}/ \mu =  \lim_{i \ra \infty} M_{i} = \cO \; , $$ we conclude that $ \cO = T / \mu$. 
This proves the proposition. 
\end{proof}

Proposition \ref{pro:liftconvergence} 
allows to apply equivariant collapsing results
for Riemannian manifolds.
Indeed, 
Theorem~B.2 in Appendix~B
implies that  there exist a sequence of $\mu$-equivariant
fibrations $$ f_{i}: \bar M_{i} \ra T $$ such that, for $i$ sufficiently large,  \begin{itemize}
\item[i)] each fiber $f_{i}^{-1}(p)$ carries a flat connection (depending smooth\-ly on $p \in T$), such that $f_{i}^{-1}(p)$ is
affinely diffeomorphic to an infranil manifold 
$N_{i} /\Delta_{i}$ with canonical flat connection $\ncan$;
\item[ii)] the structure group of $f_{i}$ is contained in the
Lie group
$$  { \C(N_{i}) \over \C(N_{i}) \cap \Delta_{i} }  \rtimes {\rm Aut}(\Delta_{i})\,  ,
$$ where $\C(N_{i})$ denotes the center of $N_{i}$;
\item[iii)] The affine structures on the fibers of $f_{i}$ are $\mu$-invariant;
\item[iv)] $f_{i}$ is an almost Riemannian submersion.
\end{itemize} 
Note, in particular, that in the terminology of Section 
\ref{sect:Seifert}, by ii),  
$f_{i}$ is an infra-$N_{i}$ bundle over the base $T$ with fiber the
infranil manifold $N_{i} /\Delta_{i}$, and, by iii),  $\mu$ acts by fiberwise affine maps on the 
bundle $f_{i}$. 

\subsection{Induced Seifert fiber space structure}
Since $T$ is aspherical, on the level of fundamental groups 
the long exact homotopy sequence for 
the fibration $f_{i}: \bar M_{i} \ra T$ gives a short exact sequence
\begin{equation}
\label{eq:fib2}
  1 \lra \Delta_{i} \lra \pi_{0} \lra \Theta_{0} \lra 1  \; \;  , 
   \end{equation} 
where $\Delta_{i} \cong \pi_{1}(N_{i} /\Delta_{i})$ is (isomorphic to) a subgroup of
$\Aff(N_{i})$. Since the fiber $N_{i}/\Delta_{i}$ of $f_{i}$ is infranil, $\Delta_{i}$ 
intersects $N_{i}$ in a uniform discrete subgroup
$\Delta_{i0}$, which is of finite index in $\Delta_{i}$.
In particular, $\Delta_{i}$  is a torsion-free finitely generated virtually nilpotent group, and $\Delta_{i0}$ is its maximal nilpotent normal subgroup. Since $\Delta_{i0}$ is normal in $\pi_{0}$, we obtain another short exact sequence
\begin{equation}
\label{eq:fib20}
  1 \lra \Delta_{i0} \lra  \pi_{0} \lra  \bar \Theta_{0} \lra 1  \; \; ,
\end{equation} 
where $\bar \Theta_{0}$ acts properly discontinuously on $Y$
with quotient $T$. Proposition \ref{prop:isSeifert} and its proof now imply:
 
\begin{proposition} \label{pro:barSeifert}
The infra-$N_{i}$ bundle $f_{i}: \bar M_{i} \, \ra \, T$ together with its flat fiber connections arises from a Seifert fiber construction $(X_{i}, N_{i}, \pi_{0})$ for the exact sequence \eqref{eq:fib20} over the base $T$. 
\end{proposition}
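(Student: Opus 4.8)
The plan is to obtain the statement as a direct application of Proposition \ref{prop:isSeifert}, followed by an identification of the realized group extension. First I would verify the hypotheses of Proposition \ref{prop:isSeifert}: the base $T = Y/\Theta_{0}$ is aspherical, since its universal cover $Y$ is contractible, and the infra-$N_{i}$ bundle $f_{i} \colon \bar M_{i} \ra T$ has compact infranil fiber $N_{i}/\Delta_{i}$ with $N_{i}$ a simply connected nilpotent Lie group. Setting $\pi_{0} = \pi_{1}(\bar M_{i})$, Proposition \ref{prop:isSeifert} then produces a Seifert fiber space $(X_{i}, N_{i}, \pi_{0})$ together with a fiberwise affine diffeomorphism $\varphi \colon X_{i}/\pi_{0} \ra \bar M_{i}$ onto $f_{i}$.

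It remains to check that this Seifert fiber space realizes the extension \eqref{eq:fib20}. Inspecting the construction carried out in the proof of Proposition \ref{prop:isSeifert}, the universal cover of the base is $W = Y$ and the principal bundle is $X_{i} = N_{i} \times Y$, on which $\pi_{0}$ acts fiberwise affinely. The normal subgroup invoked there through Lemma \ref{lem:normalizers} is exactly the maximal nilpotent normal subgroup of the fiber fundamental group, namely $\Delta_{i0} = \Delta_{i} \cap N_{i}$, which is a lattice in $N_{i}$. Consequently the translation subgroup $\pi_{N} = \rho^{-1}(N_{i})$ equals $\Delta_{i0}$, so that $\Gamma = \rho(\pi_{0}) \cap N_{i} = \Delta_{i0}$, and the Seifert fiber space realizes \eqref{eq:fib20} with quotient $\bar\Theta_{0} = \pi_{0}/\Delta_{i0}$. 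I would note that the induced action of $\bar\Theta_{0}$ on $W = Y$ factors through $\Theta_{0}$, its kernel being the finite group $\Delta_{i}/\Delta_{i0}$, so that the base orbifold $Y/\bar\Theta_{0}$ is genuinely the manifold $T = Y/\Theta_{0}$.

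Finally I would confirm that the flat fiber connections are recovered. By construction the fibers of the associated Seifert bundle map $X_{i}/\pi_{0} \ra Y/\bar\Theta_{0}$ are infra-$N_{i}$ manifolds carrying the canonical flat connection $\ncan$ induced from $N_{i}$. Since $\varphi$ is fiberwise affine, it restricts on each fiber to an affine, hence connection preserving, diffeomorphism; thus the flat fiber connections of $f_{i}$, with respect to which each fiber is affinely diffeomorphic to $N_{i}/\Delta_{i}$ equipped with $\ncan$, coincide with those arising from the Seifert construction. The one point that genuinely requires care is the bookkeeping of the second paragraph: verifying that the translation subgroup $\pi_{N}$ of the Seifert action is precisely $\Delta_{i0}$ rather than a larger subgroup of $\Delta_{i}$. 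This hinges on the fact that $\Delta_{i}$ acts on the fiber by affine transformations of $N_{i}$ whose translational part is exactly $\Delta_{i} \cap N_{i} = \Delta_{i0}$, together with the faithfulness of $\rho$.
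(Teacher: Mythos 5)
Your proposal is correct and follows essentially the same route as the paper, which proves Proposition \ref{pro:barSeifert} simply by invoking Proposition \ref{prop:isSeifert} and its proof applied to the aspherical base $T$ and the extension \eqref{eq:fib20}. Your additional bookkeeping --- that the translation subgroup $\rho^{-1}(N_{i})$ is exactly $\Delta_{i0}=\Delta_{i}\cap N_{i}$, that $Y/\bar\Theta_{0}=T$ because $\Delta_{i}/\Delta_{i0}$ acts trivially on $Y$, and that the fiberwise affine diffeomorphism matches the flat fiber connections --- is exactly the content the paper leaves implicit in the phrase ``and its proof.''
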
 

This means that, for all $i$, there exists a Seifert fiber space $(X_{i}, N_{i}, \pi_{0})$ over the base $T= Y/\bar \Theta_{0}$,  which realizes the group extension \eqref{eq:fib20},  and which has as (so called) typical fiber the nilmanifold $N_{i}/ \Delta_{0i}$. Moreover, there exists a diffeomorphism
$$ \bar \varphi_{i}: X_{i}/ \pi_{0} \,  \lra  \, \bar M_{i}$$ 
which induces  a
commutative diagram
\begin{equation}\label{eq:seifert_structure}
\begin{CD}
N_{i}/ \Delta_{i} @>{\approx}>> f_{i}^{-1}(p) \\
  @VVV  @VVV  \\
    X_{i}/ \pi_{0} @>{\approx}> {\bar \varphi_{i}}> \bar M_{i} @>>> \bar M_{i} / \mu = M_{i} \\  
   @VVV  @VV{f_{i}}V  @VVV\\   
   Y/ \pi_{0}  @>{\approx}>>  T @>>> T/\mu = \cO 
\end{CD} \; \, \; \;  \;  .
\end{equation}
such that the fiber maps 
$N_{i}/\Delta_{i} \stackrel{\approx}{\ra} f_{i}^{-1}(p)$ 
are affine diffeomorphisms. Since $f_{i}$ is $\mu$-equivariant,
it descends to the projection map $M_{i} \ra \cO$. 
\\

Let $\tilde \varphi_{i}: X_{i} \ra \tilde M_{i}$ be a lift
of $\bar \varphi_{i}$ to the universal covers.
Since by iii) above, $\mu$ 
acts on the fibers of $f_{i}$ by affine transformations, 
the transported action of $\pi$ on $X_{i}$ 
is fiberwise affine for the bundle $(X_{i}, N_{i})$.
That is, $\pi$ is contained in $\Aff(X_{i},N_{i})$.  
In the view of  Proposition \ref{prop:QuotisSeifert} we 
thus have:

\begin{proposition}  \label{pro:bundleM}
The Seifert action of  $ \pi_{0}$ on $(X_{i}, N_{i})$ extends to a Seifert action of $\pi$ on $(X_{i}, N_{i})$, such that the diffeomorphism $\tilde \varphi_{i}$ descends to a diffeomorphism 
of orbifolds $\varphi_{i}: X_{i}/\pi \ra   M_{i}$.
\end{proposition}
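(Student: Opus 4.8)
The plan is to obtain this statement as a direct application of Proposition \ref{prop:QuotisSeifert} to the infra-$N_i$ bundle $f_i: \bar M_i \to T$. All the hypotheses of that proposition are in place: by i) and ii) of Theorem~B.2 the map $f_i$ is an infra-$N_i$ bundle with compact fiber $N_i/\Delta_i$ over the aspherical base $T = Y/\bar\Theta_0$ (which is aspherical because $Y$ is contractible); by iii) the finite group $\mu = \pi/\pi_0$ acts by fiberwise affine diffeomorphisms; and by Proposition \ref{pro:liftconvergence} we have $M_i = \bar M_i/\mu$ with $\cO = T/\mu$. Taking $M_0 = \bar M_i$, $B = T$, and $\pi = \pi_1(M_i)$, Proposition \ref{prop:QuotisSeifert} then produces a Seifert fiber space $(X_i, N_i, \pi)$ over the base orbifold $T/\mu = \cO$, extending the Seifert action of $\pi_0$ already constructed in Proposition \ref{pro:barSeifert}, together with the required Seifert fiber structure.

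To see concretely how the $\pi$-action arises, I would use the lift $\tilde\varphi_i: X_i \to \tilde M_i$ of $\bar\varphi_i$ to the universal covers, which is an affine map of the trivial $N_i$-principal bundle $X_i = N_i \times Y$ onto $\tilde M_i$, in order to transport the deck action of $\pi$ on $\tilde M_i$ to an action on $X_i$. By construction this action restricts to the given $\pi_0$-action, and, as recorded in the paragraph preceding the statement, it is fiberwise affine, so that $\pi \leq \Aff(X_i, N_i)$.

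The only point that requires genuine checking is that this fiberwise affine action in fact normalizes $N_i$. Here I would invoke the $\mu$-equivariance of $f_i$: since $f_i$ is $\mu$-equivariant, the maximal nilpotent normal subgroup $\Delta_{i0} = \Delta_i \cap N_i$, which is a lattice in $N_i$ and is normal in $\pi_0$, is in fact normalized by all of $\pi$. Lemma \ref{lem:normalizers}, applied to the fiberwise affine $\pi$-action on $(X_i, N_i)$, then yields $\pi \leq \Diff(X_i, N_i)$, so that $(X_i, N_i, \pi)$ satisfies the axioms of a Seifert action exactly as in the proof of Proposition \ref{prop:QuotisSeifert}. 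Finally, because the transported $\pi$-action is by deck transformations of $\tilde M_i \to M_i$ and extends the $\pi_0$-action for which $\tilde\varphi_i$ is equivariant, the diffeomorphism $\tilde\varphi_i$ descends to the asserted diffeomorphism of orbifolds $\varphi_i: X_i/\pi \to M_i$. I expect this normalization step to be the main obstacle, and it is resolved entirely by combining the $\mu$-equivariance of $f_i$ with Lemma \ref{lem:normalizers}.
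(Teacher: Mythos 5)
Your proposal is correct and follows essentially the same route as the paper: the authors likewise transport the deck action of $\pi$ to $X_i$ via the lift $\tilde\varphi_i$, note that it is fiberwise affine because $\mu$ acts affinely on the fibers of $f_i$, and then invoke Proposition \ref{prop:QuotisSeifert} (whose proof uses Lemma \ref{lem:normalizers} exactly as you describe to show $\pi$ normalizes $N_i$). Your explicit verification of the hypotheses of Proposition \ref{prop:QuotisSeifert} and of the normalization of the lattice $\Delta_{i0}$ matches the argument given there.
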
 

Since $\Delta_{i}$ is a normal subgroup of $\pi$ and $\Delta_{i0}$
is characteristic in $\Delta_{i}$,  we have induced exact sequences of the form 
\begin{equation}
\label{eq:fibpiDeltai}
  1 \lra \Delta_{i0} \lra \pi  \lra \bar \Theta_{i} \lra 1  \; ;
 \end{equation}
moreover, for each $i$, there is a commutative diagram: \\

\begin{equation}\label{eq:}
\begin{CD}
1 @>>> \Delta_{i0} @>>>  \pi_{0} @>>>  \bar\Theta_{0}  @>>> 1\\ 
@VVV   @VV{\shortparallel}V @VVV @VVV @VVV \\
1 @>>> \Delta_{i0} @>>> \pi @>>>  \bar \Theta_{i}  @>>> 1\\
 @VVV   @VVV @VV{\shortparallel}V @VVV @VVV \\
1 @>>> \Delta  @>>> \pi @>>>  \Theta @>>> 1
\end{CD} \; \, \; \;  \;  .
\end{equation} 

\vspace{2ex}
Note that the quotient 
$\bar \Theta_{i}$ maps surjectively 
onto $\Theta$. Since, $\Delta_{i0}$ is of finite index in $\Delta$,
$\bar \Theta_{i}$ acts properly
discontinuously on $Y = X/ N_{i}$. As diagram \eqref{eq:seifert_structure} shows, the quotient space
$$ Y/ \bar \Theta_{i} = Y/ \Theta $$ is diffeomorphic to the 
orbifold $\cO = Y /\Theta$. Therefore,  the Seifert action 
$(X_{i}, N_{i},\pi)$ realizes the group extension \eqref{eq:fibpiDeltai} over the orbifold $\cO$.  
We arrive at:

\begin{theorem} There exists a diffeomorphism of the Seifert fiber space $X/\pi$ which realizes the group extension \eqref{eq:fibpiDeltai} over the base $\cO$ to the orbifold 
$M_{i}$.   
\end{theorem}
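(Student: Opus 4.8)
The plan is to collect the Seifert structure assembled over the course of this section and to check directly that it fulfils the axioms of Definition \ref{def:Seifert} with base orbifold exactly $\cO$. Fix $i$ large enough that conclusions i)--iv) of Theorem~B.2 hold, and abbreviate $X = X_{i}$, $N = N_{i}$, $\Gamma = \Delta_{i0}$. Proposition \ref{pro:bundleM} has already furnished the Seifert action of $\pi$ on $(X,N)$ together with the diffeomorphism of orbifolds $\varphi_{i} : X/\pi \ra M_{i}$; so the only thing left to certify is that $(X,N,\pi)$ is a Seifert fiber space which realizes \eqref{eq:fibpiDeltai} and whose base orbifold coincides with $\cO$.

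First I would dispatch the realization condition. By construction $\Gamma = \Delta_{i0} = \pi \cap N$ is the maximal nilpotent normal subgroup of $\Delta_{i}$, and, the fiber of $f_{i}$ being infranil, it is a lattice in $N = N_{i}$. Hence $\rho(\Gamma)$ is a discrete uniform subgroup of $N$, which is axiom (1) of Definition \ref{def:Seifert} and simultaneously the defining property for $(X,N,\pi)$ to realize the extension \eqref{eq:fibpiDeltai}.

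The step I expect to demand the most care is the identification of the base orbifold with $\cO$. Put $W = X/N = Y$ and let $\bar\Theta_{i} = \pi/\Gamma$ act on $W$ as in \eqref{eq:fibpiDeltai}. From the exact sequence \eqref{eq:fib20} the subgroup $\bar\Theta_{0} = \pi_{0}/\Gamma$ acts properly discontinuously on $Y$ with compact quotient $T = Y/\bar\Theta_{0}$, and since $\pi_{0}$ is normal of finite index $|\mu|$ in $\pi$, the group $\bar\Theta_{0}$ sits in $\bar\Theta_{i}$ as a normal subgroup with quotient $\bar\Theta_{i}/\bar\Theta_{0} = \pi/\pi_{0} = \mu$. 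A finite-index overgroup of a properly discontinuous action is again properly discontinuous, so $\bar\Theta_{i}$ acts properly discontinuously on $Y$; since moreover $\pi_{N} = \rho^{-1}(N) = \pi \cap N = \Gamma$, the quotient $\bar\Theta_{i} = \pi/\pi_{N}$ is precisely the group featuring in Definition \ref{def:Seifert}, and its proper discontinuity on $W$ is axiom (2). Passing to quotients by $\mu$ and invoking Proposition \ref{pro:liftconvergence} then gives
$$ W/\bar\Theta_{i} \, = \, (Y/\bar\Theta_{0})/\mu \, = \, T/\mu \, = \, \cO \; , $$
so the base orbifold of the Seifert fibration is indeed $\cO$.

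Finally I would combine these points with Proposition \ref{pro:bundleM}: the data $(X,N,\pi)$ satisfy both axioms of Definition \ref{def:Seifert}, realize the extension \eqref{eq:fibpiDeltai}, and fiber over $\cO$, whence $\varphi_{i}$ is the asserted diffeomorphism from the Seifert fiber space $X/\pi$ to $M_{i}$. The one genuinely non-formal ingredient --- the compatibility of the extended $\pi$-action with the finite quotient by $\mu$, i.e.\ that passing from the manifold base $T$ to the orbifold base $\cO$ preserves the fiberwise affine structure --- has already been secured by Proposition \ref{prop:QuotisSeifert}, on which Proposition \ref{pro:bundleM} rests, so no further work on the fibers is needed here.
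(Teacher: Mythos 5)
Your argument is correct and follows essentially the same route as the paper: both rest on Propositions \ref{pro:barSeifert} and \ref{pro:bundleM} for the extended Seifert action and the diffeomorphism $\varphi_{i}$, verify that $\Delta_{i0}$ is a lattice in $N_{i}$, and identify the base via $Y/\bar\Theta_{i} = T/\mu = \cO$ using Proposition \ref{pro:liftconvergence}. The only cosmetic difference is that you obtain proper discontinuity of $\bar\Theta_{i}$ from the finite-index subgroup $\bar\Theta_{0}$, whereas the paper deduces it from the finite-kernel surjection $\bar\Theta_{i} \to \Theta$; the two observations are interchangeable here.
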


In particular, the smooth manifold (respectively orbifold) $M$ carries a Seifert fiber space structure over the orbifold 
$\cO = Y / \Theta$. 

\subsection{Collapsing of irreducible fake tori}  
\label{sect:proofofit}
We show now that irreducible fake smooth tori do not
collapse to aspherical Riemannian orbifolds.

\begin{proposition} Let $\Tau$ be an irreducible
fake smooth torus. Then $\Tau$ does not admit a collapse 
with bounded
curvature 
to a compact aspherical Riemannian orbifold
$\cO = Y/\Theta$.
\end{proposition}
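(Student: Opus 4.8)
The plan is to argue by contradiction, using the Seifert structure produced in this section to force $\Tau$ to be a product. Suppose the irreducible fake torus $\Tau$ did admit a bounded curvature collapse to the compact aspherical orbifold $\cO = Y/\Theta$. By the Theorem established above, $\Tau$ then carries a Seifert fiber structure over $\cO$ realizing a group extension $1 \to \Delta \to \pi \to \Theta \to 1$, where $\pi = \pi_1(\Tau) \cong \mathbb{Z}^n$ and the fiber is an infranil manifold $N/\Delta$. First I would exploit that $\pi$ is abelian. The subgroup $\Delta \leq \mathbb{Z}^n$ is free abelian, and its intersection $\Delta \cap N$ is an abelian lattice in the simply connected nilpotent group $N$, which forces $N = \mathbb{R}^k$ to be abelian (Malcev); hence $\Delta$ is a torsion-free crystallographic group. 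An abelian crystallographic group coincides with its translation lattice, so $\Delta \cong \mathbb{Z}^k$ is a full lattice in $N = \mathbb{R}^k$ and the fiber is a standard flat torus $T^k$. Abelianness of $\pi$ also makes its conjugation action on $\Delta$ trivial, so the holonomy of the Seifert action vanishes: the extension is central and the fiber torus $T^k$ acts on $\Tau$ with quotient $\cO$. Since a collapse strictly lowers dimension, $k = n - \dim \cO \geq 1$.

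Next I would show the base is a manifold and the fibering is a trivial bundle. As a quotient of $\mathbb{Z}^n$, the group $\Theta$ is finitely generated abelian, and it acts effectively, properly discontinuously and cocompactly by isometries on the contractible manifold $Y$; I claim this forces $\Theta$ to be torsion-free. Indeed, a central element of prime order $p$ would, by Smith theory, have nonempty fixed set in the $\mathbb{Z}/p$-acyclic space $Y$; this set is totally geodesic, and since the element is central it is invariant under $\Theta$ and therefore contains a full $\Theta$-orbit, making it relatively dense. In the bounded geometry of the universal cover $Y$ of the compact orbifold $\cO$, a relatively dense totally geodesic submanifold must be all of $Y$, so the element is trivial. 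Hence $\Theta \cong \mathbb{Z}^{n-k}$, the orbifold $\cO$ is a closed aspherical manifold, and the Seifert fibering is a genuine principal $T^k$-bundle over $\cO = K(\Theta,1)$. Because $\mathbb{Z}^{n-k}$ is free the central extension $1 \to \mathbb{Z}^k \to \mathbb{Z}^n \to \mathbb{Z}^{n-k} \to 1$ splits; as the Euler class of the bundle in $H^2(\cO;\mathbb{Z}^k) = H^2(\Theta;\mathbb{Z}^k)$ is precisely the class of this extension, it vanishes, and the principal torus bundle is smoothly trivial. Thus $\Tau \cong T^k \times \cO$ as smooth manifolds.

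Finally I would bring in irreducibility. The factor $\cO$ is a closed aspherical manifold with $\pi_1(\cO) \cong \mathbb{Z}^{n-k}$, hence, by the topological rigidity of the torus, homeomorphic to $T^{n-k}$. If $\cO$ were diffeomorphic to the standard torus, then $\Tau \cong T^k \times T^{n-k} = T^n$ would be standard, contradicting that $\Tau$ is fake; therefore $\cO$ is itself a fake torus of dimension $n-k < n$. But then $\Tau \cong T^k \times \cO$ with $k \geq 1$ exhibits $\Tau$ as a product of a standard torus and a lower-dimensional fake torus, contradicting the irreducibility of $\Tau$. This contradiction shows that $\Tau$ cannot collapse to $\cO$.

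The main obstacle is the middle step. The conceptual point is that Seifert rigidity does not directly identify $\Tau$ with $T^n$, since the base $\cO$ may carry an exotic smooth structure; the whole argument hinges on showing that this exoticness is confined to the base, i.e.\ that $\Theta$ is torsion-free and that the torus bundle over the resulting manifold base is smoothly trivial. Once the product splitting $\Tau \cong T^k \times \cO$ is in hand, irreducibility does the rest, while the preceding reductions to a standard torus fiber and a central extension are formal consequences of the Seifert machinery of Section~\ref{sect:Seifert} together with the abelianness of $\pi_1(\Tau)$.
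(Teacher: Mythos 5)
Your proof is correct in outline and reaches the paper's conclusion by the same skeleton: assume the collapse, invoke Theorem~\ref{thmA} to get a Seifert fibering over $\cO$ with infranil fiber realizing an extension of $\Theta$ by a free abelian group, use the abelianness of $\pi_1(\Tau)=\bbZ^n$ to conclude that $\Theta$ is torsion-free (so $\cO$ is a closed aspherical manifold and a homotopy torus) and that the extension splits as a direct product, deduce that $\Tau$ is diffeomorphic to a product of a standard torus with $\cO$, and then play the fake/irreducible dichotomy against $\cO$. Where you genuinely diverge is the mechanism for the product decomposition: the paper compares the given Seifert fibering with the product Seifert fibering $T^{n-k}\times\cO\to\cO$ realizing the same split extension and invokes Seifert rigidity (Theorem~\ref{thm:nilrigidity}) to get an affine diffeomorphism $\Tau\cong T^{n-k}\times\cO$ in one stroke; you instead identify the fiber as a standard torus, observe the holonomy is trivial, recognize the fibering as a principal torus bundle over the manifold $\cO$, and trivialize it by identifying its Euler class in $H^2(\Theta;\bbZ^k)$ with the vanishing extension class. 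Both routes work; note that, contrary to the worry in your last paragraph, Seifert rigidity already confines the exoticness to the base, since it yields the product with the possibly exotic $\cO$ rather than with a standard $T^{n-k}$, so the Euler-class detour is not needed (though it is a perfectly sound, more bundle-theoretic alternative).

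One sub-step deserves a caveat. Your argument that $\Theta$ is torsion-free ends with the assertion that a relatively dense totally geodesic submanifold of $Y$ must equal $Y$; that is not justified as stated (a fixed-point set can a priori be a coarsely dense proper submanifold, and it need not even be connected without further input). The standard repair is to use the full strength of Smith theory: the fixed set $F$ of a prime-order element is nonempty \emph{and} $\bbZ/p$-acyclic, the free abelian part of $\Theta$ acts on $F$ freely, properly discontinuously and cocompactly, and a cohomological-dimension count then forces $\dim F=\dim Y$, hence $F=Y$ by total geodesy and connectedness of $Y$, contradicting effectiveness. The paper asserts torsion-freeness of $\Theta$ without proof, so this does not change the comparison, but as written that step of yours is a gap rather than a proof.
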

\begin{proof} Assume that $M$ has such a collapse.
By Theorem \ref{thmA}, 
$M$ admits the structure of a Seifert fiber space over
$\cO$ with infranil fiber, which realizes a group extension
of the form 
$$ 1 \ra \bbZ^{n-k} \ra \bbZ^n   \ra \Theta \ra 1 \; . $$
Since $\Theta$ is abelian and
it acts properly
discontinuously with compact quotient on the contractible
manifold $Y$, we infer that $\Theta$ is torsion-free abelian,
isomorphic to $\bbZ^k$, $k < n$.
Therefore, the above extension is isomorphic to the direct product
$$    \bbZ^n =  \bbZ^{n-k} \times \bbZ^k \, .   $$ 
Note that $\cO = Y/ \Theta$ is a manifold and a homotopy torus. This implies that $\cO$ is also 
homeomorphic to a torus $T^k$. 
The Seifert rigidity Theorem \ref{thm:nilrigidity} implies
that the Seifert fiber space $\Tau$ over $\cO$ 
is affinely diffeomorphic to the product 
$T^{n-k} \times \cO$. Therefore, $\cO$ is 
a fake smooth torus. This contradicts the irreducibility
of $\Tau$. 
\end{proof}

We can give  now the proof for  Corollary \ref{cor:irreducibletori}: 
\begin{proof} 
Assume to the contrary that $D$-$\mathrm{MinVol} (\Tau) = 0$. Since $\Tau$ is aspherical,  this assumption 
implies (see the proof of Fukaya's Theorem \cite[Theorem 0-1]{Fukaya_88a} or \cite[Theorem 16.1]{Fukaya_90}) that $\Tau$ admits a bounded
curvature collapse to a compact aspherical Riemannian 
orbifold $\cO = Y/ \Theta$. This is not possible by the previous
proposition.
\end{proof}

\section{Seifert fiberings on infrasolv manifolds}
Let $S$ be a simply connected \emph{solvable} Lie group. 
\begin{definition}
A compact infra-$S$ space $S/\Delta$ will be called an \emph{infrasolv orbifold}. If $S/\Delta$ is a manifold, 
it is called an \emph{infrasolv manifold}. 
\end{definition}

In this section we 
prove that up to diffeomorphism every infrasolv orbifold arises from a Seifert fiber construction with nil-geometry over a flat Riemannian orbifold. In addition we show that the class of 
manifolds which are diffeomorphic to infrasolv manifolds is closed with respect to the Seifert fiber construction with nil-geometry. 
In view of Theorem \ref{thmA}, this implies Theorem \ref{thmC} 
in the introduction.

\subsection{Flat orbifolds associated to virtually abelian groups}
Let $\bar \Theta$ be
a  finitely generated virtually abelian group of rank 
$k$. Then there exists a 
homomorphism $\bar \Theta \ra \Isom(\bbR^k)$ such that
$\bar \Theta$ acts properly discontinuously on $\bbR^k$. 
This homomorphism is unique up to conjugacy by affine transformations (that is, by elements of $\Aff(\bbR^k))$.  (By Bieberbach's second and third theorems,
see, for example, \cite{Wolf}.) Let $\cO_{\bar \Theta} = \bbR^k /\bar \Theta$ 
denote the flat orbifold associated to $\bar \Theta$.

\subsection{Seifert fiberings over flat orbifolds}
The following implies Theorem \ref{thmB}
in the introduction:  

\begin{theorem} Let $M = S/ \pi$ be an infrasolv 
orbifold, and 
\begin{equation} 
\label{eq:fib4}
1 \lra \Delta_{0} \lra \pi \lra \bar \Theta \lra 1
\end{equation}  
an exact sequence of groups, where
$\Delta_{0}$ is a finitely generated torsion-free nilpotent group. 
Assume further that $\bar \Theta$ is
a virtually abelian finitely generated group of rank $k$. 
Then $M$ admits a Seifert fiber structure over 
the flat orbifold $\cO_{\bar \Theta}=\bbR^k \big/\bar \Theta$, which realizes
the group extension \eqref{eq:fib4}. 
\end{theorem}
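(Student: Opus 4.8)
The plan is to build the Seifert fiber space $(X, N, \pi)$ abstractly from the extension \eqref{eq:fib4} and then identify the resulting quotient orbifold with $M$, thereby producing the diffeomorphism $\varphi : X/\pi \to M$ demanded by Definition \ref{def:seifertstructure}. First I would fix the fiber geometry: since $\Delta_{0}$ is finitely generated torsion-free nilpotent, Malcev's theorem embeds it as a lattice in a unique simply connected nilpotent Lie group $N$ (its Malcev completion), and every automorphism of $\Delta_{0}$ extends uniquely to $N$. For the base I would invoke the preceding subsection to realize $\bar\Theta$ as a properly discontinuous group of isometries of $W = \bbR^{k}$, with quotient the flat orbifold $\cO_{\bar\Theta} = \bbR^{k}/\bar\Theta$. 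Feeding $\Gamma = \Delta_{0}$, the Lie group $N$, and this $\bar\Theta$-action on $W$ into the Seifert existence theorem (\cite[\S 2.2]{KLR}) then produces a Seifert fiber space $(X, N, \pi)$ which realizes \eqref{eq:fib4} and induces the given $\bar\Theta$-action on $\bbR^{k}$. Since $W$ is contractible, the bundle $X \to W$ trivializes, $X \cong N \times \bbR^{k}$, and $X/\pi$ is a closed aspherical orbifold carrying a Seifert bundle map onto $\cO_{\bar\Theta}$ with typical infranil fiber $N/\Delta_{0}$.

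It then remains to show $X/\pi$ is diffeomorphic to $M = S/\pi$. Both are closed aspherical orbifolds with orbifold fundamental group $\pi$, and $\pi$ is virtually polycyclic; so by the diffeomorphism classification of infrasolv orbifolds in \cite{Baues} it suffices to exhibit an infrasolv structure on $X/\pi$. Concretely I would analyze the fiberwise affine $\pi$-action on $X \cong N \times \bbR^{k}$: the abstract kernel of \eqref{eq:fib4} yields a holonomy homomorphism from $\bar\Theta$ into the (outer) automorphisms of $N$, and I would promote this together with the affine $\bar\Theta$-action on the flat base to an affine action of $\pi$ on a single simply connected solvable Lie group $S' = N \rtimes \bbR^{k}$ equipped with a left-invariant flat structure. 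This makes $X/\pi$ infrasolv with fundamental group $\pi$, and \cite{Baues} then gives $X/\pi \cong S/\pi = M$.

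The main obstacle is exactly this last identification: reconciling the merely smooth, fiberwise affine $\pi$-action coming out of the Seifert construction with a genuinely \emph{affine} action for a left-invariant geometry on a solvable group. The delicate point is to realize the nilpotent fiber holonomy and the affine action of the virtually abelian quotient on $\bbR^{k}$ simultaneously and compatibly --- in particular to control how the finite part of $\bar\Theta$ acts at once on the $N$-fibers and on the flat base --- so that they assemble into one affine $\pi$-action on $S'$. Once this compatibility is in place, the diffeomorphism can alternatively be obtained directly from Seifert rigidity (Theorem \ref{thm:nilrigidity}) applied to the identity isomorphism of the extension \eqref{eq:fib4}, which lifts to an affine equivalence of the two Seifert actions and hence descends to the desired diffeomorphism $X/\pi \to M$.
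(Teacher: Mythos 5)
Your overall strategy --- construct a Seifert fiber space $(X,N,\pi)$ abstractly from the extension \eqref{eq:fib4} via the KLR existence theorem and then identify $X/\pi$ with $M$ --- is different from the paper's, and the identification step, which you yourself flag as the main obstacle, is a genuine gap rather than a deferred technicality. Knowing that $X/\pi$ is a closed aspherical orbifold with orbifold fundamental group $\pi$ is not enough to invoke \cite{Baues}: that rigidity theorem applies only to infrasolv (equivalently, standard) orbifolds, so you must actually produce an infrasolv structure on $X/\pi$. Your candidate $S' = N \rtimes \bbR^k$ does not exist as described: the extension \eqref{eq:fib4} need not split, the abstract kernel only gives a homomorphism $\bar\Theta \to \mathrm{Out}(N)$ rather than an action of $\bbR^k$ on $N$ by automorphisms, and the finite point group of $\bar\Theta$ cannot be absorbed into a connected solvable Lie group. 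Producing a simply connected solvable $S'$ with a properly discontinuous affine $\pi$-action is essentially the Auslander--Johnson/Mostow existence problem for infrasolv structures --- a theorem in its own right, not a consequence of the Seifert data. Your fallback, applying Seifert rigidity (Theorem \ref{thm:nilrigidity}) to the identity isomorphism of \eqref{eq:fib4}, is circular: rigidity compares two Seifert actions realizing the extension, and the existence of such an action on $M$ is precisely what is to be proved.

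The paper avoids all of this by never constructing an abstract Seifert space. It first replaces $M$ by the standard $\pi$-orbifold $M_{\pi} = U_{\pi}/\pi$ (legitimate by the smooth rigidity results of \cite{Baues}), where $\pi$ already sits canonically and affinely inside $\Aff(U_{\pi})$ via the algebraic hull. The fibering is then obtained intrinsically (Proposition \ref{prop:standard_seifert}): one takes $U_0$ to be the Zariski closure of $\Delta_0$ in the unipotent shadow $U_{\pi}$, checks that it is normal, and shows by a rank count that the quotient $U_1 = U_{\pi}/U_0$ together with the image of $\pi$ is the standard $\bar\Theta$-orbifold, i.e.\ the flat orbifold $\bbR^k/\bar\Theta$. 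The data $(U_{\pi}, U_0, \pi)$ then directly define the required Seifert structure. To salvage your route you would still need this (or an equivalent) construction to certify that $X/\pi$ is infrasolv, so the detour through the KLR existence theorem gains nothing.
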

\begin{proof} Since $M$ is diffeomorphic to a standard $\pi$-orbifold $M_{\pi}$ (see below), it is enough to show the
statement for $M_{\pi}$. Therefore, the theorem is a consequence of Proposition \ref{prop:standard_seifert} below.
\end{proof}

\begin{corollary} Every infrasolv orbifold has a Seifert fiber structure (with nil-geometry) over a flat orbifold. 
\end{corollary}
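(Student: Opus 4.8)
The plan is to derive this corollary directly from the Theorem immediately preceding it. For an arbitrary closed infrasolv orbifold $M = S/\pi$, where $\pi \leq \Aff(S)$ is the orbifold fundamental group (a discrete cocompact group of affine isometries whose image in $\Aut(S)$ has compact closure, and in particular a finitely generated virtually polycyclic group), it suffices to exhibit a short exact sequence
$$ 1 \lra \Delta_{0} \lra \pi \lra \bar\Theta \lra 1 $$
with $\Delta_{0}$ finitely generated torsion-free nilpotent and $\bar\Theta$ finitely generated virtually abelian of some rank $k$. The Theorem then equips $M$ with a Seifert fiber structure over $\cO_{\bar\Theta} = \bbR^{k}/\bar\Theta$, and since $\Delta_{0}$ is nilpotent the typical fiber is infranil, so the fibering has nil-geometry and the base is flat, which is exactly the assertion.

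First I would produce $\Delta_{0}$. Let $N \leq S$ be the nilradical, i.e.\ the maximal connected normal nilpotent subgroup of $S$. As $N$ is characteristic in $S$ it is normalized by all of $\Aff(S)$, and I set $\Delta_{0} = \pi \cap N$, the subgroup of those elements of $\pi$ which act on $S$ as left translations by elements of $N$. Then $\Delta_{0}$ is normal in $\pi$; it is torsion-free because the simply connected nilpotent group $N$ has no nontrivial elements of finite order; it is nilpotent as a subgroup of $N$; and it is finitely generated because $\pi$ is virtually polycyclic. The key structural input at this stage is that, for solvable $S$, the derived group satisfies $[S,S] \subseteq N$, so that $S/N$ is a simply connected abelian Lie group, hence isomorphic to $\bbR^{k}$; by the infrasolv structure theory of \cite{Baues} the group $\Delta_{0}$ is moreover a uniform lattice in $N$, so that $N/\Delta_{0}$ is the infranil fiber.

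Next I would verify that $\bar\Theta = \pi/\Delta_{0}$ is finitely generated virtually abelian of rank $k$. Since $N$ is $\Aff(S)$-invariant, the action of $\pi$ descends to an action of $\bar\Theta$ on $S/N \cong \bbR^{k}$ by affine isometries, that is, by Euclidean isometries for the flat metric induced from $S$. Invoking \cite{Baues} to ensure that this induced action is properly discontinuous and cocompact with discrete image, one obtains that $\bar\Theta$ maps onto a crystallographic subgroup of $\Isom(\bbR^{k})$, which is virtually abelian of rank $k$ by Bieberbach's theorems (cf.\ \cite{Wolf}). The kernel of the descended homomorphism $\bar\Theta \ra \Isom(\bbR^{k})$ consists of the classes acting trivially on $S/N$, which is precisely the holonomy of the infranil fiber $N/\Delta_{0}$ and hence finite by Bieberbach's first theorem applied to the fiber. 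Therefore $\bar\Theta$ is an extension of a virtually abelian group by a finite group, so it is itself virtually abelian of rank $k$, and the proof concludes by applying the Theorem to the sequence above.

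The main obstacle is the passage to the base in the third step: unlike the infranil case, where Bieberbach's first theorem at once yields a finite holonomy and a finite-index translation lattice, for general infrasolv groups the holonomy in $\Aut(S)$ may a priori be infinite, and one must ensure that the action induced on $S/N$ is genuinely properly discontinuous with discrete image. This is exactly where the compactness of the holonomy closure and the solvable Bieberbach structure of \cite{Baues} are indispensable. The remaining points --- normality and torsion-freeness of $\Delta_{0}$, finite generation, and the identification of the base as the flat orbifold $\bbR^{k}/\bar\Theta$ --- are then routine.
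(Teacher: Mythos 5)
Your route is genuinely different from the paper's, and unfortunately it breaks at the very first step. You take $\Delta_{0}=\pi\cap N$, where $N$ is the nilradical of the model group $S$, and assert (attributing this to \cite{Baues}) that $\Delta_{0}$ is a uniform lattice in $N$. This is false in general, and it is precisely the failure of Bieberbach's first theorem for solvable model groups. Concretely, let $S=\bbR^{2}\rtimes_{\phi}\bbR$ with $\phi(t)=R(t)$ the rotation by angle $t$, so that $N=\bbR^{2}$ and $S/N\cong\bbR$. The maps $\alpha_{s}:(t,v)\mapsto (t,R(s)v)$ form a compact circle subgroup of $\Aut(S)$ preserving the obvious left-invariant metric, and the subgroup $H=\{\,((t,v),\alpha_{-t})\,\}\leq S\rtimes\{\alpha_{s}\}\leq\Aff(S)$ is isomorphic to $\bbR^{3}$ and acts on $S\cong\bbR^{3}$ simply transitively by the translations $(t',v')\mapsto(t+t',v+v')$. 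Inside $H\cong\bbR^{3}$ (coordinates $(t,v_{1},v_{2})$) take the lattice $L$ generated by $(1,0,0)$, $(\sqrt{2},1,0)$, $(\sqrt{3},0,1)$. Then $S/L$ is a compact infrasolv manifold (diffeomorphic to $T^{3}$) with $\pi=L\cong\bbZ^{3}$, but an element of $L$ lies in $N$ only if its $t$-coordinate $a+b\sqrt{2}+c\sqrt{3}$ vanishes with $a,b,c\in\bbZ$, which forces $a=b=c=0$. Hence $\pi\cap N=\{e\}$ is not a lattice in $N=\bbR^{2}$, your quotient $\bar\Theta=\pi$ is $\bbZ^{3}$, and it cannot act properly discontinuously on $S/N\cong\bbR$; the proposed Seifert data collapse entirely. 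The same example shows that your subsequent finiteness-of-holonomy argument cannot be rescued, since it presupposes that the kernel of $\pi\ra\Aff(S/N)$ acts cocompactly on $N$.

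This is exactly the difficulty the paper is designed to sidestep. Its proof is purely group-theoretic: it takes $\Delta_{0}=\mathrm{Fitt}(\pi)$, the maximal nilpotent normal subgroup of the abstract group $\pi$ (in the example above, all of $\bbZ^{3}$, giving the fibering over a point), uses the classical fact that $\pi/\mathrm{Fitt}(\pi)$ is finitely generated virtually abelian for virtually polycyclic $\pi$, and then applies the preceding theorem, whose proof replaces $S/\pi$ by the standard $\pi$-orbifold built from the unipotent shadow and algebraic hull of $\pi$ rather than from the geometry of $S$. If you want a geometric subgroup of $S$ to fiber along, it must be one adapted to $\pi$ (such as the Zariski closure of $\mathrm{Fitt}(\pi)$ in the hull), not the nilradical of the ambient model group.
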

\begin{proof}
Indeed, we may  consider the Fitting subgroup $\Delta_{0}$
of $\pi$, which is (by definition) the maximal nilpotent normal subgroup of $\pi$.
This gives rise to an exact sequence of the form \eqref{eq:fib4}.
\end{proof}

Let $\pi$ be a torsion-free virtually polycyclic group.
Then (by \cite{Ausl_Johns}, but see also Section  \ref{sect:smooth_model} below),
there exists an infrasolv manifold $M_{\pi}$
with fundamental group $\pi$. 

\begin{corollary} Let $X/\pi$ be the total space of a
Seifert fiber space with nil-geometry over a flat orbifold 
$\cO=  \bbR^k \big/\Theta$. Then $X/\pi$ is (diffeomorphic to)
an infrasolv manifold.
\end{corollary}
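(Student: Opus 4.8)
The plan is to recover $\pi=\pi_1(X/\pi)$ as a torsion-free virtually polycyclic group, to manufacture a closed infrasolv manifold $M_\pi$ with this fundamental group, and then to identify $X/\pi$ with $M_\pi$ by Seifert rigidity.

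First I would extract the group-theoretic data. Being a Seifert fiber space with nil-geometry over $\cO = \bbR^k/\Theta$, the space $X/\pi$ realizes an extension
$$ 1 \lra \Gamma \lra \pi \lra \Theta \lra 1 , $$
in which $\Gamma$ is a lattice in the simply connected nilpotent group $N$ and $\Theta$ is virtually abelian of rank $k$. As a lattice in $N$, the group $\Gamma$ is finitely generated, torsion-free and nilpotent, hence polycyclic, while $\Theta$ is virtually polycyclic; an extension of a virtually polycyclic group by a polycyclic group is again virtually polycyclic, so $\pi$ is virtually polycyclic. Since $X$ is a principal $N$-bundle over the contractible base $\bbR^k$, it is itself contractible (indeed $X \cong N \times \bbR^k$); as $X/\pi$ is a manifold, $\pi$ acts freely, so $X/\pi$ is a $K(\pi,1)$ and $\pi$ is torsion-free.

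Next I would produce the infrasolv model. Because $\pi$ is torsion-free virtually polycyclic, \cite{Ausl_Johns} yields a closed infrasolv manifold $M_\pi = S/\pi$ with $\pi_1(M_\pi) \cong \pi$. Viewing the extension above as an extension of the virtually abelian group $\Theta$ of rank $k$ by the finitely generated torsion-free nilpotent group $\Gamma$, the refinement of Theorem~\ref{thmB} established above equips $M_\pi$ with a Seifert fiber structure $(X', N, \pi)$ over the flat orbifold $\bbR^k/\Theta$ realizing this very extension. Here the nilpotent group is once more the Malcev hull of $\Gamma$, hence equals $N$, and by Bieberbach rigidity the two actions of $\Theta$ on $\bbR^k$ agree up to affine conjugacy.

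It then remains to compare the two realizations. The Seifert fiber spaces $(X, N, \pi)$ and $(X', N, \pi)$ realize the same extension over the base action $(\bbR^k, \Theta)$, so I would feed the identity isomorphism of extensions --- diagram \eqref{eq:seifertiso} with all vertical arrows the identity --- together with the affine equivariant diffeomorphism $(\bbR^k, \Theta) \to (\bbR^k, \Theta)$ into the Seifert rigidity Theorem~\ref{thm:nilrigidity}. This lifts to an affine equivalence $(f, \mathrm{id}) : (X, \pi) \to (X', \pi)$, inducing a diffeomorphism $X/\pi \to X'/\pi = M_\pi$; hence $X/\pi$ is diffeomorphic to the infrasolv manifold $M_\pi$. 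The step I expect to demand the most care is precisely this matching: one must verify that both fibers are modelled on the same nilpotent group $N$ and that the two $\Theta$-actions on $\bbR^k$ can be taken literally equal (not merely abstractly isomorphic), so that the hypothesis of Theorem~\ref{thm:nilrigidity} --- an equivariant diffeomorphism of bases covering an isomorphism of extensions --- is genuinely satisfied.
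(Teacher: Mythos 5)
Your proposal is correct and follows essentially the same route as the paper: extract the extension $1\to\Gamma\to\pi\to\Theta\to 1$, invoke \cite{Ausl_Johns} to build an infrasolv manifold $M_\pi$ with fundamental group $\pi$, equip $M_\pi$ with a Seifert fiber structure over $\bbR^k/\Theta$ realizing the same extension (Proposition \ref{prop:standard_seifert}/Theorem \ref{thmB}), and conclude via the Seifert rigidity Theorem \ref{thm:nilrigidity}. The extra checks you flag (torsion-freeness and virtual polycyclicity of $\pi$, uniqueness of the $\Theta$-action on $\bbR^k$ up to affine conjugacy by Bieberbach) are exactly the points the paper leaves implicit or handles in its preliminary subsections.
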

\begin{proof} Since $X/\pi$ is a Seifert fiber space over the base $\cO$, there exists a homomorphism $\pi \ra \Theta$, as in \eqref{eq:fib4}. Let $M_{\pi}$ be an infrasolv manifold with fundamental group $\pi$, and $f:M_{\pi} \ra \cO$ the bundle map for the Seifert
fiber structure on $M_{\pi}$ which realizes the group extension representing the map $\pi \ra \Theta$.
By Theorem \ref{thm:nilrigidity}  (uniqueness of the Seifert construction), the infra-$N$ bundles 
$M_{\pi}$ and $X/\pi$
are fiberwise affinely diffeomorphic over $\cO$. 
In particular, $M_{\pi}$ and $X/\pi$ are diffeomorphic. 
\end{proof}

%
%
 \subsection{Flat affine manifolds}
 Let $U$ be a unipotent real linear algebraic group. Then $U$ is in particular a simply connected nilpotent Lie group. Conversely, every simply connected
nilpotent Lie group admits the structure of a unipotent 
real linear algebraic group. (Cf.\  \cite[Preliminaries, \S 2]{Raghunathan} for basic facts on linear algebraic groups.)
 \subsubsection{Flat manifolds modeled on $U$} \label{sect:Umanifolds}
We consider manifolds which
carry locally the \emph{affine} geometry of $U$. This generalizes the notion of \mbox{infra-$U$} manifold, which is an essentially
 metric concept.
 
Let $\Aff(U) = \Aff(U,\ncan)$ be the affine group of $U$.

\begin{definition} 
Let $\Delta \leq \Aff(U)$ be a discrete subgroup which acts properly 
on $U$ with compact quotient. Then the quotient $$ U /\Delta$$ is called a \emph{flat affine $U$-orbifold}. 
If $\Delta$ acts freely on $U$ then $ U /\Delta$ is a
 \emph{flat affine $U$-manifold}.
 \end{definition}

\emph{Remark.}
A long standing conjecture (``Auslander's conjecture'') 
asserts that $\Delta$ must be a virtually polycyclic group. 

\subsubsection{Unipotent shadow and algebraic hull} 
Now let $\Delta$ be a virtually polycyclic group without a finite (non-trivial) 
normal subgroup. By a construction, originally due to Mostow and Auslander 
(see \cite[Chapter IV, Proposition 4.4]{Raghunathan} and \cite[Appendix A.1]{Baues}, for reference), we may associate to $\Delta$ in a unique way a unipotent real linear algebraic group $U_{\Delta}$, which is called the
\emph{unipotent shadow} of  $\Delta$. 
The unipotent shadow $U_{\Delta}$ has the following
characterizing additional properties:
\begin{enumerate}
\item $\dim U_{\Delta} =
\rank \Delta$. (Recall that the rank of $\Delta$ coincides with the virtual cohomological dimension of $\Delta$.)
\item There exists a $\bbQ$-defined real linear algebraic
group $H= H_{\Delta}$ which has $\Delta \leq H(\bbQ)$ as a Zariski-dense subgroup. 
\item 
There is a splitting of algebraic groups $H = U_{\Delta} \rtimes T$, such that 
$T$ is a subgroup of semi-simple elements
which acts faithfully on $U_{\Delta}$.
\end{enumerate}
The group $H$ is called the \emph{algebraic hull of $\Delta$}.   

\subsubsection{Standard $\Delta$-manifolds} 
Since $T$ acts faithfully on $U_{\Delta}$, we may realize the hull $H = H_{\Delta}$ as a subgroup
of $\Aff(U_{\Delta})$ in a natural way. In particular, in this way, 
$\Delta$ acts faithfully on $U_{\Delta}$ by affine transformations.
Then we have:

\begin{theorem}[cf.\ \mbox{\cite[Theorem 1.2]{Baues}}]
The affine action of $\Delta$ on $U_{\Delta}$ is properly discontinuous,
and it is uniquely defined up to conjugacy with an element of the affine group 
$\Aff(U)$.
\end{theorem}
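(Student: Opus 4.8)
The statement comprises two independent claims: that the affine action of $\Delta$ on $U_{\Delta}$ is properly discontinuous, and that it is determined by $\Delta$ up to conjugacy in $\Aff(U)$. The plan is to treat these separately, in both cases reducing to the structure of the algebraic hull $H = U_{\Delta} \rtimes T$. First I would set up the geometric picture: since $U_{\Delta}$ acts simply transitively by left translations and $T$ fixes the identity $e \in U_{\Delta}$ (acting through the faithful representation $T \hookrightarrow \Aut(U_{\Delta})$), the orbit map $\theta\colon H(\bbR) \ra U_{\Delta}$, $h \mapsto h \cdot e$, identifies $U_{\Delta}$ with the homogeneous space $H(\bbR)/T(\bbR)$, and the affine action of $\Delta$ becomes the restriction to $\Delta \leq H(\bbQ) \leq H(\bbR)$ of the natural $H(\bbR)$-action on $H(\bbR)/T(\bbR)$. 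Passing to a torsion-free normal subgroup of finite index (Selberg's lemma), I may assume $\Delta$ is polycyclic, proper discontinuity for $\Delta$ following from that of the finite-index subgroup.

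For proper discontinuity I would exhibit a \emph{syndetic hull} of $\Delta$ inside $H(\bbR)$: a closed, connected, solvable subgroup $G \leq H(\bbR)$ with $\dim G = \rank \Delta = \dim U_{\Delta}$ which contains $\Delta$ as a cocompact lattice and which acts simply transitively on $U_{\Delta}$. Its existence rests on the syndetic hull construction for solvable groups combined with the defining property $\dim U_{\Delta} = \rank \Delta$ of the unipotent shadow: writing $p\colon H \ra T$ for the projection modulo $U_{\Delta}$, the image $p(\Delta) \leq T$ is a finitely generated abelian group of semisimple elements, and the dimension of its syndetic hull in $T(\bbR)$, added to that of the translational directions contributed by $U_{\Delta}\cap\Delta$, recovers exactly $\rank \Delta$. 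Granting $G$, the orbit map $\theta|_{G}\colon G \ra U_{\Delta} = H(\bbR)/T(\bbR)$ is a local diffeomorphism by the count $\dim G + \dim T(\bbR) = \dim H(\bbR)$; one then checks $G \cap T(\bbR) = 1$ and $G\, T(\bbR) = H(\bbR)$, so that $\theta|_{G}$ is a diffeomorphism and $G$ is simply transitive. Under the resulting identification $U_{\Delta} \cong G$ the action of $\Delta$ is by left translations of the lattice $\Delta \leq G$, hence is properly discontinuous (and, as a byproduct, cocompact). The hard part will be the construction of $G$ and the verification that $\theta|_{G}$ is bijective, i.e.\ that the noncompact semisimple directions of $T(\bbR)$ are entirely absorbed by translations of $U_{\Delta}$ along $\Delta$; this is precisely where the equality $\dim U_{\Delta} = \rank \Delta$ is indispensable, and it is the step I expect to require the most care.

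For the uniqueness clause I would argue that the algebraic hull $H = U_{\Delta}\rtimes T$ together with the embedding $\Delta \hookrightarrow H(\bbQ)$ is determined by $\Delta$ up to isomorphism (\cite{Baues}, \cite[Ch.~IV]{Raghunathan}), so that the only further choices entering the affine realization are the maximal reductive subgroup $T$ and the induced embedding $T \hookrightarrow \Aut(U_{\Delta})$. By Mostow's conjugacy theorem for maximal reductive subgroups of a linear algebraic group, any two admissible $T$ are conjugate by an element $u \in U_{\Delta}$, and conjugating the embedding $H \hookrightarrow \Aff(U_{\Delta}) = U_{\Delta}\rtimes \Aut(U_{\Delta})$ by $u$ is exactly conjugation by the translation $u \in \Aff(U_{\Delta})$, while any remaining ambiguity in the identification of $U_{\Delta}$ is absorbed into the $\Aut(U_{\Delta})$-factor of $\Aff(U_{\Delta})$. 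This yields independence of all choices up to conjugacy in $\Aff(U)$. To promote this to uniqueness among \emph{all} properly discontinuous affine actions of $\Delta$ on a unipotent group $U$, I would show that the Zariski closure of such an action, constrained by $\dim U = \rank \Delta$, reconstructs the algebraic hull and hence the canonical action; this is the content of \cite[Theorem~1.2]{Baues} and reduces the general assertion to the uniqueness of the hull already invoked.
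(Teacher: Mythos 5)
The paper itself offers no proof of this statement: it is quoted from \cite[Theorem 1.2]{Baues}, so the comparison can only be with the argument of that reference. Your two-part strategy --- identify $U_{\Delta}$ with $H(\bbR)/T(\bbR)$ and produce a connected solvable subgroup $G\leq H(\bbR)$ containing $\Delta$ as a cocompact lattice and acting simply transitively on $U_{\Delta}$, then derive the conjugacy clause from uniqueness of the algebraic hull together with Mostow's conjugacy theorem for maximal reductive subgroups --- is in essence the strategy of \cite{Baues}. The uniqueness half of your sketch is sound modulo the facts you cite; the only cosmetic slip is that the $H$-equivariant map $H/T\to H/(uTu^{-1})$ is a \emph{right} translation of $U_{\Delta}$, which is still an element of $\Aff(U_{\Delta})$ since $R_{u^{-1}}=L_{u^{-1}}\circ\mathrm{Inn}(u)$, so nothing is lost.

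The genuine gap is in the proper-discontinuity half. The existence of $G$ with the properties you require --- $\Delta$ discrete and cocompact in $G$, $G\cap T(\bbR)=1$, and $G\,T(\bbR)=H(\bbR)$ --- is not a routine consequence of ``the syndetic hull construction combined with $\dim U_{\Delta}=\rank\Delta$''; it is essentially equivalent to the theorem, and you explicitly defer it. Two points in particular are left hanging. First, $\Delta\leq H(\bbQ)$ is not a priori discrete in $H(\bbR)$ (finitely generated solvable subgroups of $\bbQ$-points can fail to be discrete, as $BS(1,2)\leq \bbQ\rtimes\bbQ^{*}$ shows), so even the assertion that $\Delta$ sits as a \emph{lattice} in the putative $G$ must be extracted from the hull axioms; your dimension bookkeeping for $p(\Delta)\leq T$ quietly presupposes it. Second, a syndetic hull of a solvable subgroup of $H(\bbR)$ can meet $T(\bbR)$ in a positive-dimensional compact torus, and ruling this out is exactly where one must combine $\dim U_{\Delta}=\rank\Delta$ with the faithfulness of $T\to\Aut(U_{\Delta})$ (equivalently $Z_{H}(U_{\Delta})\leq U_{\Delta}$); that step is the heart of \cite[Theorem 1.2]{Baues}. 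As written, the proposal is a correct road map whose central lemma is asserted rather than proved.
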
 

The quotient space 
$$ M_{\Delta} \, = \, U_{\Delta} \big/ \Delta $$ is called a 
\emph{standard $\Delta$-orbifold.} If 
$ M_{\Delta}$ is a manifold it  is called a \emph{standard $\Delta$-manifold.} Remarkably, in standard $\Delta$-manifolds all homotopy equivalences of maps are induced by affine automorphisms of $U_{\Delta}$, see \cite{Baues}. 

\emph{Remark.}  If $\bar \Delta$ is an arbitrary virtually polycyclic group, dividing
by its maximal finite normal subgroup yields a unique homomorphism 
$\bar \Delta \ra \Delta$ onto the fundamental group of a standard $\Delta$-orbifold.
We also call $M_{\bar \Delta} = M_{\Delta}$ the standard $\bar \Delta$-orbifold associated to $\bar \Delta$. 

\subsubsection{Smooth rigidity of infrasolv manifolds} \label{sect:smooth_model}
As remarked in \cite{Baues}, every standard $\Delta$-manifold admits a (in general, non-unique) infrasolv manifold structure modeled on some solvable Lie group. In particular,  every torsion-free group $\Delta$ as above appears as the fundamental group of an infrasolv manifold.  Moreover, standard $\Delta$-manifolds may serve as \emph{unique} smooth models for infrasolv manifolds: 

\begin{theorem}[cf.\ \mbox{\cite[Theorem 1.4]{Baues}}]
Let $M = S / \Delta$ be a (compact) infrasolv manifold, where $S$ is a simply connected solvable Lie group and $\Delta \leq \Aff(S)$
a discrete group of isometries. Then $M$ is smoothly diffeomorphic to the standard $\Delta$-manifold $M_{\Delta}$.
\end{theorem}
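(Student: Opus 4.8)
The plan is to reduce the statement to the Seifert rigidity of Theorem~\ref{thm:nilrigidity}. Since $M = S/\Delta$ is a compact manifold, $\Delta$ acts freely, properly discontinuously and cocompactly, so $\Delta$ is torsion-free and virtually polycyclic, and $\dim S = \dim M = \rank \Delta = \dim U_{\Delta}$ by property (1) of the unipotent shadow. As both $S$ and $U_{\Delta}$ are contractible and carry free, properly discontinuous, cocompact affine $\Delta$-actions, it suffices to produce a $\Delta$-equivariant diffeomorphism $S \ra U_{\Delta}$; this descends to the desired diffeomorphism $S/\Delta \ra U_{\Delta}/\Delta = M_{\Delta}$.

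First I would organize both models as nil-Seifert fiberings over a flat orbifold attached to the \emph{Fitting subgroup} $\Delta_{0}$ of $\Delta$. On the standard side, the realization of the algebraic hull $H_{\Delta} = U_{\Delta} \rtimes T$ inside $\Aff(U_{\Delta})$ exhibits $\Delta_{0} = \Delta \cap U_{\Delta}$ as a lattice in a simply connected nilpotent subgroup of $U_{\Delta}$, while $\bar\Theta = \Delta/\Delta_{0}$ embeds virtually into the torus $T$ and is therefore virtually abelian of some rank $k$; this gives $M_{\Delta}$ a Seifert fiber structure over $\cO_{\bar\Theta} = \bbR^{k}/\bar\Theta$ realizing $1 \ra \Delta_{0} \ra \Delta \ra \bar\Theta \ra 1$. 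On the infrasolv side I would instead use the nilradical $N$ of $S$: since the derived group $[S,S]$ is nilpotent and hence contained in $N$, the quotient $S/N$ is a vector group $\bbR^{k}$, and because $N$ is normal and characteristic the group $\Delta \leq \Aff(S) = S \rtimes \Aut(S)$ permutes the $N$-cosets and so descends to an affine action on $S/N = \bbR^{k}$ whose linear part has compact closure (the infra-condition). Thus $\Delta$ acts on $\bbR^{k}$ by isometries, and dividing by the action on $N$ and passing to $\Delta_{0}$ exhibits $S/\Delta$, via Propositions~\ref{prop:isSeifert} and \ref{prop:QuotisSeifert}, as a Seifert fiber space with infranil fiber over a flat orbifold of the form $\cO_{\bar\Theta}$, again realizing the extension by $\Delta_{0}$. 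Note that in both models the fiber nilpotent group is the Malcev hull of the same group $\Delta_{0}$, hence canonically the same.

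Granting that both fiberings realize the \emph{same} extension $1 \ra \Delta_{0} \ra \Delta \ra \bar\Theta \ra 1$ over a flat orbifold of the form $\cO_{\bar\Theta}$, I would apply Theorem~\ref{thm:nilrigidity}. The flat orbifold attached to the virtually abelian group $\bar\Theta$ is unique up to affine equivalence by Bieberbach's theorems (see~\cite{Wolf}), so the two bases are equivariantly diffeomorphic; feeding this together with the identity isomorphism of extensions into diagram~\eqref{eq:seifertiso}, Seifert rigidity provides an affine equivalence of the two Seifert actions, hence a $\Delta$-equivariant diffeomorphism $S \ra U_{\Delta}$ which is moreover affine on the infranil fibers. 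Passing to quotients yields $S/\Delta \cong U_{\Delta}/\Delta = M_{\Delta}$.

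The main obstacle is exactly the clause ``the same extension over the same flat base.'' Because many non-isomorphic solvable groups $S$ can contain $\Delta$ as a cocompact lattice, one cannot expect an isomorphism of the ambient Lie groups, and a priori the holonomy representation $\bar\Theta \ra \mathrm{GL}(\bbR^{k})$ coming from the $\Delta$-action on $S/N$ need not coincide with the one coming from $T$ on the standard side. The crux is to show that both, being properly discontinuous cocompact actions of the \emph{same} abstract group $\bar\Theta$ on $\bbR^{k}$, are conjugate by an affine map, and that the fiber data are Malcev-rigidly determined by $\Delta_{0}$ alone; equivalently, that the unipotent shadow computed intrinsically from the infrasolv model $S$ agrees with $U_{\Delta}$. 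This intrinsic matching is where the uniqueness properties of the algebraic hull recorded above, together with the canonicity of the standard action (Theorem cited from \cite[Theorem~1.2]{Baues}), carry the weight of the argument.
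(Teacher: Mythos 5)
This theorem is not proved in the paper at all: it is imported verbatim from \cite[Theorem 1.4]{Baues}, where the argument runs through algebraic hulls and polynomial structures (one embeds $S$ into the hull $H_{\Delta}=U_{\Delta}\rtimes T$ and projects along $T$ to get a $\Delta$-equivariant diffeomorphism $S\to U_{\Delta}$), not through Seifert rigidity. So your route is genuinely different, and unfortunately it has a real gap at exactly the point you flag as the ``crux,'' plus one you do not flag. The unflagged one is the claim that both models fiber over a flat orbifold with fiber group the \emph{Fitting subgroup} $\Delta_{0}=\mathrm{Fitt}(\Delta)$ and that ``in both models the fiber nilpotent group is the Malcev hull of the same group $\Delta_{0}$.'' On the infrasolv side your construction uses the nilradical $N$ of $S$, whose fiber group is (at best) $\Delta\cap N$, and this can differ from $\mathrm{Fitt}(\Delta)$ even up to commensurability. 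Take $S=\bbR^{2}\rtimes\bbR$ with $\bbR$ acting by rotations and $\Delta=\bbZ^{2}\rtimes_{A}\bbZ$ with $A$ the rotation by $\pi/2$: here $N=\bbR^{2}$, so $\Delta\cap N=\bbZ^{2}$ and the nilradical fibering realizes $1\to\bbZ^{2}\to\Delta\to\bbZ\to 1$ over a circle, whereas $\Delta$ is a Bieberbach group with $\mathrm{Fitt}(\Delta)\cong\bbZ^{3}$ of finite index, so the standard-side fibering realizes $1\to\bbZ^{3}\to\Delta\to\bbZ/4\to 1$ over a point. These are different extensions over non-diffeomorphic bases, so Theorem \ref{thm:nilrigidity} cannot be applied to the pair as you set it up. The fix is to run the comparison with the extension by $\Delta\cap N$ on \emph{both} sides (Proposition \ref{prop:standard_seifert} does supply a Seifert fibering of $M_{\Delta}$ realizing any extension with torsion-free finitely generated nilpotent kernel), not with the Fitting extension.

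Even after that repair, the load-bearing step on the infrasolv side is unproved: you need that $\Delta\cap N$ is a \emph{lattice} in $N$ and that the induced affine action of $\Delta/(\Delta\cap N)$ on $S/N\cong\bbR^{k}$ is properly discontinuous and cocompact (conditions (1) and (2) of Definition \ref{def:Seifert}). For lattices $\Delta\leq S$ this is Mostow's theorem, but for general $\Delta\leq\Aff(S)$ with compact holonomy closure --- where $\Delta\cap S$ need not even have finite index in $\Delta$ --- it is a nontrivial piece of structure theory that is not among the results quoted in the paper. Note also the danger of circularity within this paper's architecture: the statement that infrasolv orbifolds Seifert-fiber over flat orbifolds (Theorem 1.2 / Section 4) is itself \emph{deduced} from the smooth rigidity theorem you are trying to prove, via the reduction ``$M$ is diffeomorphic to $M_{\pi}$, so it suffices to treat $M_{\pi}$.'' Any Seifert-rigidity proof of the present theorem must therefore establish the fibering of $S/\Delta$ independently, which is essentially where all the work of \cite{Baues} lives.
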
 

The corresponding statements  for infrasolv orbifolds and
standard $\Delta$-orbifolds hold as well, with the same proof
as in \cite{Baues}.

\subsection{Seifert fiberings on standard-manifolds}

Let $M_{\pi}$ be a standard $\pi$-orbifold, where $\pi$ is 
a virtually polycyclic group. Consider 
an exact sequence of groups
\begin{equation}
\label{eq:fib5}
1 \lra \Delta_{0} \lra \pi \lra \bar \Theta \lra 1 \; , 
\end{equation} where
$\Delta_{0}$ is a finitely generated torsion-free nilpotent group. 

\begin{proposition} \label{prop:standard_seifert}
There exists a Seifert fiber structure for $M_{\pi}$, which realizes the group
extension \eqref{eq:fib5} over the standard $\bar \Theta$-orbifold $M_{\bar \Theta}$.
\end{proposition}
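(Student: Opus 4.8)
The plan is to realize the Seifert fiber structure directly on the standard model $M_{\pi} = U_{\pi}/\pi$, exploiting the algebraic hull $H_{\pi} = U_{\pi} \rtimes T$ of $\pi$ and its compatibility with the normal nilpotent subgroup $\Delta_{0}$. The fiber group will be the unipotent shadow $N = U_{\Delta_{0}}$ of $\Delta_{0}$; since $\Delta_{0}$ is torsion-free and nilpotent, $N$ is simply its Malcev completion, a simply connected nilpotent Lie group in which $\Delta_{0}$ sits as a lattice.

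First I would embed $N$ as a closed normal subgroup of $U_{\pi}$. The inclusion $\Delta_{0} \hookrightarrow \pi$ induces, by functoriality of the algebraic hull, a morphism $H_{\Delta_{0}} = N \to H_{\pi}$; because $N$ is unipotent its image lies in the unipotent radical $U_{\pi}$, and because $\Delta_{0}$ is Zariski-dense in $N$ this morphism is injective, identifying $N$ with the Zariski closure $\overline{\Delta_{0}}$ of $\Delta_{0}$ in $U_{\pi}$. Since $\Delta_{0}$ is normal in $\pi$ and Zariski-dense in $N$, conjugation by $\pi \le H_{\pi}$ preserves $N$. Recall that $\pi$ acts affinely on $U_{\pi}$ through $H_{\pi} \le \Aff(U_{\pi})$, with $U_{\pi}$ acting by left translations and $T$ by automorphisms; letting $N \le U_{\pi}$ act on $X = U_{\pi}$ by left translations, a direct computation (conjugating a left translation $L_{n}$ by an affine element $u\, t$ produces $L_{u\, t(n)\, u^{-1}}$) shows that $\pi$ normalizes this $N$-action, so $\pi \le \Diff(X, N)$.

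Next I would verify the Seifert axioms and identify the base. The group $N$ acts freely and properly on $X = U_{\pi}$ with quotient $W = U_{\pi}/N$, a simply connected unipotent group; a Hirsch-length count $\rank \pi = \rank \Delta_{0} + \rank \bar\Theta$ yields $\dim W = k$, and since $\bar\Theta$ is virtually abelian its unipotent shadow is abelian, so $W \cong \bbR^{k} = U_{\bar\Theta}$. The subgroup $\rho(\pi) \cap N$ is discrete (the $\pi$-action is properly discontinuous) and contains the lattice $\Delta_{0}$, hence is itself a uniform lattice in $N$; this is axiom (1), and it shows that the data realize the extension \eqref{eq:fib5} in the required sense that $\rho(\Delta_{0})$ is a lattice in $N$. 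Since $\Delta_{0}$ has finite index in $\pi_{N} = \rho^{-1}(N)$, the remark following the definition of realization supplies axiom (2): the induced action of $\Theta = \pi/\pi_{N}$ on $W$ is properly discontinuous. Finally, the quotient map $H_{\pi} \to H_{\pi}/N \cong H_{\bar\Theta}$ identifies the induced $\bar\Theta$-action on $W \cong \bbR^{k}$ with the standard affine action, so that $W/\bar\Theta = M_{\bar\Theta}$. Thus $(X, N, \pi) = (U_{\pi}, N, \pi)$ is a Seifert fiber space over $M_{\bar\Theta}$ whose total space $X/\pi = U_{\pi}/\pi = M_{\pi}$ realizes \eqref{eq:fib5}.

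I expect the main obstacle to be the algebraic bookkeeping around the algebraic hull: precisely, establishing that $N = \overline{\Delta_{0}}$ is a closed normal subgroup of $U_{\pi}$ with $\Delta_{0}$ a lattice, and that passing to the quotient by $N$ recovers the algebraic hull of $\bar\Theta$, so that the base is exactly the standard $\bar\Theta$-orbifold $M_{\bar\Theta}$. This rests on the functoriality and quotient compatibility of the Mostow--Auslander hull construction (as recalled from \cite{Baues, Raghunathan}) together with the defining faithfulness of $T$ on $U_{\pi}$, which guarantees that no further part of $\pi$ collapses into the fiber direction. Once this compatibility is in place, the remaining verifications of the Seifert axioms are routine.
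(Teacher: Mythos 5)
Your proposal is correct and takes essentially the same route as the paper's proof: the paper also sets $X=U_{\pi}$, takes the fiber group to be the Zariski closure $U_{0}=\overline{\Delta_{0}}$ of $\Delta_{0}$ in $U_{\pi}$ (normal in $H=U_{\pi}T$ by Zariski density of $\pi$), and identifies the base with the standard $\bar\Theta$-orbifold via the induced map $H\ra\Aff(U_{\pi}/U_{0})$. The one step you defer to ``functoriality and quotient compatibility of the hull'' is exactly where the paper does its explicit work, namely the rank count $\dim U_{1}\leq\rank\Theta\leq\rank\pi-\rank\Delta_{0}=\dim U_{1}$ showing that the image $H_{1}$ of $H$ satisfies the hull axioms for $\Theta$ --- which is the same Hirsch-length computation you sketch.
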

\begin{proof} Let $U= U_{\pi}$ be the unipotent shadow of $\pi$. 
Recall that for the standard affine embedding $\pi \leq \Aff(U)$
the intersection $U \cap \pi$ is the maximal nilpotent normal subgroup
${\rm Fitt}(\pi)$ of $\pi$ (see \cite{Baues}). Since $\Delta_{0}$ is nilpotent, 
we therefore have $\Delta_{0} \leq U$. Let $U_{0}$ be the Zariski-closure of 
$\Delta_{0}$ in $U$. Then $\Delta_{0}$ is a finite index subgroup of
$\Delta_{0}'  = U_{0} \cap \pi$. Moreover, $U_{0}$ is normalized by
$\pi$, since $\Delta_{0}$ is a normal subgroup of $\pi$. 
Since $\pi$ is Zariski-dense in $H = U T$ it follows, in particular,
that $U_{0}$ is normal in $H$ and hence also in $U$.  
Let $U_{1} = U/U_{0}$ be
the quotient unipotent $\bbQ$-defined real linear algebraic group.

We consider now the induced homomorphism 
$H \ra \Aff(U_{1})$. Let $\Theta$ denote the 
image of $\pi$ in $\Aff(U_{1})$. It clearly is a Zariski 
dense subgroup of the image  $H_{1}$ of $H$
in $\Aff(U_{1})$.  Since $U_{1}$ is the unipotent
radical of $H_{1}$, we have the relation $\dim U -\dim U_{0} 
= \dim U_{1} \leq \rank \Theta$ (see \cite[Chapter IV, Lemma 4.36]{Raghunathan}). Since $\Delta_{0}$ is contained in the kernel of
$\pi \ra \Theta$, we have  $\rank \Theta  \leq \rank \pi - \rank \Delta_{0} = \dim U -\dim U_{0}$. We conclude that $\rank \Theta = \dim U_{1}$. It is now evident that the image  $H_{1}$ of $H$
in $\Aff(U_{1})$ satisfies the axioms for an algebraic hull of $\Theta$. Therefore
$M_{\Theta} = U_{1}/ \Theta$ is a standard $\Theta$-manifold. 
The data $(X, N, \pi) = (U, U_{0}, \pi)$ define a Seifert bundle
map $$ \sigma:  M_{\pi}   \ra M_{\Theta} $$
and the Seifert
construction $(U, U_{0}, \pi)$ realizes the group
extension \eqref{eq:fib5} over $M_{\bar \Theta} = M_{\Theta}$.
\end{proof}

\begin{corollary} The class of smooth orbifolds represented by
standard $\Delta$-orbifolds is closed with respect to the Seifert
fiber construction with nil-geometry.
\end{corollary}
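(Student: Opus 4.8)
The statement asserts that if one performs a Seifert fiber construction with nilpotent fiber over a base which is a standard $\Theta$-orbifold, then the resulting total space is again diffeomorphic to a standard orbifold. Concretely, the input is a Seifert fiber space $(X,N,\pi)$ with $N$ a simply connected nilpotent Lie group, over a base standard $\Theta$-orbifold $M_{\Theta}$, realizing an extension
$$ 1 \lra \Gamma \lra \pi \lra \Theta \lra 1 $$
in which $\Gamma$ is a lattice in $N$. The plan is to produce a \emph{standard} $\pi$-orbifold which itself carries a Seifert structure over the same base and realizes the same extension, and then to invoke the nil-rigidity Theorem~\ref{thm:nilrigidity} to identify it with $X/\pi$. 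This runs exactly parallel to the preceding corollary for infrasolv manifolds, with ``infrasolv manifold over a flat orbifold'' replaced throughout by ``standard $\pi$-orbifold over a standard $\Theta$-orbifold''.

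First I would check that $\pi$ is eligible to define a standard $\pi$-orbifold. The base group $\Theta$ is the fundamental group of a standard orbifold, hence virtually polycyclic and without non-trivial finite normal subgroup, while the fiber group $\Gamma$ is finitely generated torsion-free nilpotent, hence polycyclic. An extension of a virtually polycyclic group by a polycyclic group is virtually polycyclic, so $\pi$ is virtually polycyclic. Moreover, since $\Gamma$ is torsion-free and normal, any finite normal subgroup $F$ of $\pi$ satisfies $F \cap \Gamma = 1$ and therefore maps injectively onto a finite normal subgroup of $\Theta$, which must be trivial; hence $\pi$ has no non-trivial finite normal subgroup and the standard $\pi$-orbifold $M_{\pi} = U_{\pi}/\pi$ is defined.

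Next I would apply Proposition~\ref{prop:standard_seifert} to $M_{\pi}$, taking $\Delta_{0}=\Gamma$ in the exact sequence \eqref{eq:fib5}. This equips $M_{\pi}$ with a Seifert fiber structure over the standard $\bar\Theta$-orbifold with $\bar\Theta=\Theta$, that is, over $M_{\Theta}$ itself, realizing the extension $1 \to \Gamma \to \pi \to \Theta \to 1$. The fiber Lie group produced there is the Zariski closure $U_{0}$ of $\Gamma$ in $U_{\pi}$; since both $N$ and $U_{0}$ are simply connected nilpotent groups containing $\Gamma$ as a lattice, Malcev rigidity identifies each with the real Malcev completion of $\Gamma$, so the two fiber groups agree up to isomorphism. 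Likewise, by the Bieberbach-type uniqueness for standard orbifolds (\cite[Theorem 1.2]{Baues}), the two induced affine actions of $\Theta$ on the base are equivariantly diffeomorphic.

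It then remains to feed these identifications into Theorem~\ref{thm:nilrigidity}. The two Seifert fiber spaces $X/\pi$ and $M_{\pi}$ realize the same extension $1 \to \Gamma \to \pi \to \Theta \to 1$, and the induced $\Theta$-actions on their bases are equivariantly diffeomorphic; the rigidity theorem therefore lifts the base diffeomorphism to an affine equivalence of Seifert actions, yielding a diffeomorphism $X/\pi \to M_{\pi}$. Hence the total space is diffeomorphic to the standard $\pi$-orbifold $M_{\pi}$, which proves closure. I expect the only delicate point to be the bookkeeping that matches the fiber groups and the base actions so that the hypotheses of Theorem~\ref{thm:nilrigidity} are literally met; once the extensions and the equivariant base data are put into the form of diagram \eqref{eq:seifertiso}, the rest is a direct appeal to nil-rigidity.
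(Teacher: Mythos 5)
Your proposal is correct and follows essentially the same route as the paper: construct a Seifert fibering on the standard $\pi$-orbifold $M_{\pi}$ over the same base realizing the same extension via Proposition~\ref{prop:standard_seifert}, then identify it with the given total space by the Seifert rigidity theorem (Theorem~\ref{thm:nilrigidity}). The additional checks you supply (that $\pi$ is virtually polycyclic with no non-trivial finite normal subgroup, and the Malcev-rigidity matching of fiber groups) are details the paper leaves implicit, but they are accurate and do not change the argument.
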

\begin{proof} Indeed, let $\sigma: M  \ra M_{\bar \Theta}$ be a Seifert fiber 
space over a standard orbifold $M_{\bar \Theta}$, which realizes a group
extension of the form \eqref{eq:fib5}. By the previous proposition, 
the standard-$\pi$ orbifold $M_{\pi}$ also supports a Seifert construction
with nil-geometry over $M_{\bar \Theta}$, which realizes  \eqref{eq:fib5}.
By the Seifert rigidity theorem (Theorem \ref{thm:nilrigidity}), $M$
and $M_{\pi}$ are diffeomorphic.
\end{proof}

\section{APPENDIX}

\appendix

In the following sections we gather several important notions and results
from equivariant Gromov-Hausdorff convergence, developed by Fukaya and Yamaguchi,
and the work of Cheeger, Fukaya, and Gromov on collapsed Riemannian
manifolds with bounded sectional curvature. General references for these
topics are \cite{Fukaya_90} and \cite{Gro_99}.

\

\section{(Equivariant) Gromov-Hausdorff Convergence}
\noindent
Let us first recall the definition of the classical Hausdorff distance:
For subsets $A$ and $B$
of a metric space $X$, the {\it Hausdorff distance} between $A$ and $B$ in $X$, $d_H^X(A,B)$,
is defined as the infimum of all positive real numbers $\eps$ such that the open $\eps$-neighbourhood of $A$ in $X$ contains $B$ and vice versa. 
For any metric space $X$, the Hausdorff distance $d_H^X$ then
defines a metric on the set of
all closed and bounded nonempty subsets of $X$. 

Around 1980, Gromov introduced an abstract
version of the Hausdorff distance as follows:
The {\it Gromov-Hausdorff distance} of two compact metric spaces $X$ and $Y$, 
$d_{GH}(X,Y)$, is defined as the infimum of all numbers
$d_H^Z(f(X), g(Y))$, where $Z$ ranges over all metric spaces 
in which $X$ and $Y$ can be imbedded isometrically, and $f$ and $g$ over all isometric
embeddings $X\to Z$ and $Y\to Z$. Alternatively,
the Gromov-Hausdorff distance of $X$ and $Y$ 
can also be defined as the infimum of all Hausdorff distances
$d_H^Z(X,Y)$, where $Z$ is the disjoint union of $X$ and $Y$ 
and where the infimum is now taken over all
metrics on $Z$ which extend the metrics on $X$ and $Y$.

The Gromov-Hausdorff distance $d_{GH}$ defines a metric on the collection $\cC$ of all
compact metric spaces, considered up to isometry, and therefore
gives rise to the notion of {\it Gromov-Hausdorff convergence}
of sequences of compact metric spaces. Moreover, with respect to $d_{GH}$, the space
$\cC$ is complete.

\

It is sometimes also convenient to work 
with so-called (Gromov-) Hausdorff approximations:
If $X$ and $Y$ are compact metric spaces and $\eps$ is real and positive,
a (not necessarily continuous) map
$\phi: X\to Y$  is called an $\eps-${\it Hausdorff approximation from $X$ to $Y$}
if $\vert\ts d_Y(\phi (x), \phi(x')) - d_X(x, x') \ts\vert \ts\le \ts\eps$ for all $x, x'\in X$
and if the $\eps-$neighbourhood of $\phi (X)$ in $Y$ is equal to $Y$.

Notice that if $d_{GH}(X,Y)$ is less than $\eps$, then there exist
$3\eps-$Hausdorff approximations from $X$ to $Y$ and $Y$ to $X$. Conversely,
if there exist $\eps-$Hausdorff approximations between $X$ and $Y$,
then $d_{GH}(X,Y)\ts \le 3\eps$.
In particular, Gromov-Hausdorff convergence can therefore also
be defined via Hausdorff approximations.

\

There is also a
useful notion of Gromov-Hausdorff convergence for noncompact metric spaces, at
least for {\it length spaces} in which, by definition, the
distance between points is given by the infimum of the lengths of
all curves joining the points: 

A sequence $(X_n, x_n)_{n\in\N}$ of
locally compact length spaces with given basepoints $x_n\in X_n$ is said to converge
to a pointed metric space $(Y,y)$ in the pointed Gromov-Hausdorff
sense if, for all $r>0$, the closed $r$-balls $B_r(x_n)$
Gromov-Hausdorff converge in the usual sense to the closed r-ball $B_r(y)$ in $Y$.

Under this so-called {\it pointed Gromov-Hausdorff convergence}
or {\it Gro\-mov-Hausdorff convergence with basepoint},
the collection ${\cM}$  of all locally compact complete pointed length spaces, 
considered up to isometries that preserve basepoints,
is complete as well. Moreover, 
the same is true for ordinary Gromov-Hausdorff convergence 
in the subspaces ${\mathcal L} (D)$ of $\mathcal C$
which are made up of the isometry classes of compact length spaces
subject to a given uniform upper bound §$D$ for the diameter.

In the context of Riemannian manifolds 
with lower curvature bounds Gromov obtained
the following fundamental result:

\

\noindent
{\bf Theorem A.1 (Gromov's Precompactness Theorem)} \thinspace
{\it For any natural number $m$ and any real numbers $\kappa$ and $D$, 
the class of closed Riemannian $m$-manifolds with Ricci curvature 
${\rm Ric}$ $\ge (m-1)\ts \kappa$ and diameter
${\rm diam}$ $\le D$ is precompact in ${\mathcal L} (D)$ with respect to
 Gromov-Hausdorff convergence,
and the class of pointed complete Riemannian $m$-manifolds with $Ric\ge (m-1)\ts \kappa$
is precompact in $\mathcal M$ with respect to the pointed Gromov-Hausdorff topology.
}

\

If one replaces the Ricci curvature bounds in Theorem A.1 by sectional
curvature ones, one may actually replace ${\mathcal L} (D)$ and $\mathcal M$ 
by appropiate classes of Alexandrov spaces, but we will not need this here.
Instead, let us now turn to notions of Gromov-Hausdorff convergence 
for metric spaces which are equipped with isometric group actions.

\

Let ${\cM}$ be as above, and let ${\cM}^{Groups}$ consist of all triples $(X,\Gamma , p)$
with $(X,p) \in \cM$ such that $\Gamma$ is a closed subgroup
of the isometry group of $X$, and let in this case for real positive $r$ be
$\Gamma (r)$ the set of all $\gamma\in\Gamma$ for which $d(\gamma p, p) < r$.

\

\noindent
{\bf Definition A.2} \thinspace
{\it An $\eps$-{\rm equivariant Hausdorff approximation with basepoint}~$p$
from $(X,\Gamma , p) \in {\cM}^{G}$ to $ (Y, \Lambda, q) \in {\cM}^{Groups}$
is given by a triple of maps $(f,\phi,\psi)$ with

$$ f: B_p(1/\eps , X) \to Y, \quad \phi : \Gamma (1/\eps) \to \Lambda (1/\eps) 
\quad {\rm and} \quad \psi :  \Lambda (1/\eps) \to \Gamma (1/\eps)$$

that satisfies the following conditions:

\

1. $f(p)=q \ts ;$

\

2. the $\eps$-neighbourhood of $f(B_p(1/\eps , X))$ contains $B_q(1/\eps , Y)\ts ;$

\

3. for all $x_1, x_2 \in B_p(1/\eps , X)$ one has

$$ \vert\ts d(f(x_1) , f(x_2)) -  d(x_1 , x_2)\ts\vert\ts < \eps\ts ;$$

\

4. if $\gamma\in \Gamma (1/\eps)$, $x\in B_p(1/\eps , X)$, and $\gamma\ts x \in B_p(1/\eps , X)$,
then 
$$d\ts (f(\gamma\ts x) , \phi (\gamma) \ts (f(x))) \ts < \ts\eps \ts ;$$

\

5. if $\mu\in \Lambda (1/\eps)$, $x\in (B_p(1/\eps , X)$, and $\psi (\mu)\ts (x) \in (B_p(1/\eps , X)$,
then 
$$d \ts (f(\psi (\mu)\ts (x)) , \mu \ts (f(x)) \ts < \ts\eps \ts .$$
}

\

Notice that it is neither required that the maps occuring here
be globally defined, nor that they be continuous, nor, as regards $\phi$ and $\psi$,
be homomorphisms.

\

\noindent
{\bf Definition A.3} \thinspace
{\it Let $(X,\Gamma , p) , (Y, \Lambda, q) \in {\cM}^{Groups}$.
The  {\rm equivariant Gromov-Hausdorff distance with basepoint}
$d_{GH}^e ((X,\Gamma , p) , (Y, \Lambda, q))$ is defined as the
infimum of all positive real numbers $\eps$ such that there
exist $\eps$-{equivariant Hausdorff approximations with basepoint}
from 
$(X,\Gamma , p)$ to $(Y, \Lambda, q)$ and vice versa.

If $(X_n,\Gamma_n , p_n) , (Y, \Lambda, q) \in {\cM}^{Groups}$,
the equation $${\rm lim}_{n\to\infty} \quad (X_n,\Gamma_n , p_n) = (Y, \Lambda, q)$$
means that
$${\rm lim}_{n\to\infty} \quad d_{GH}^e ((X_n,\Gamma_n , p_n) , (Y, \Lambda, q)) = 0\ts .$$
}

\

If the groups that occur in the above definition are isomorphic to a fixed group $G$,
we will also speak of {\it $G$-equivariant Gromov-Hausdorff distance (with basepoint)}.

In the case where all groups are trivial, we will write $(X, p)$ instead
of $(X, {\rm id}, p)$ etc.~and $d_{GH}$ instead of $d_{GH}^e$,
and equivariant Gromov-Hausdorff
convergence with basepoint then reduces to ordinary Gromov-Hausdorff convergence with basepoint.

Let us also note that 
in the situtation where all spaces in question are compact and have uniformly
bounded diameters, Gromov-Hausdorff convergence and Gromov-Hausdorff convergence
with base point are essentially equivalent concepts: The latter clearly implies the former,
and up to passing to a subsequence the converse also holds.

\

Let us now gather several important facts and results concerning equivariant 
Gromov-Hausdorff convergence.

\

\noindent
{\bf Proposition A.4 (\cite{FY_92})} \thinspace
{\it If $(X_n,\Gamma_n , p_n)\in {\cM}^{Groups}$ and $(Y, q) \in {\cM}$ satisfy
with respect to the Gromov-Hausdorff distance with basepoint
$${\rm lim}_{n\to\infty} \quad (X_n, p_n) = (Y, q) \ts ,$$
then there exist $\Lambda$ and a subsequence $\{n_k\}_{k\in\N}$ such that
$(Y, \Lambda, q) \in {\cM}^{Groups}$ and such that 
$${\rm lim}_{k\to\infty} \quad (X_{n_k},\Gamma_{n_k} , p_{n_k}) = (Y, \Lambda, q) \ts .$$
}

\

Notice that even if all $\Gamma_n$ and $X_n$ above are the same,
in general the limit group $\Lambda$ can be a different group.
However, for compact groups and spaces we have the following stability result.

\

\noindent
{\bf Proposition A.5 (\cite[Theorem 6.9]{Fukaya_90})} \thinspace 
{\it Let $G$ be a group and $\mathcal M (G)$ denote the set
of all isometry classes of compact metric spaces equipped
with an isometric $G$-action. 
If $G$ is compact, then for any natural number $m$ 
and any real numbers $\kappa$ and $D$,
the class of closed $m$-dimensional Riemannian $G$-manifolds with Ricci curvature 
${\rm Ric}$ $\ge (m-1)\ts \kappa$ and diameter
${\rm diam}$ $\le D$ is precompact in ${\mathcal M (G)}$ with respect to the
$G$-equivariant Gromov-Hausdorff topology.
}

\

The following results establish several relations between equivariant 
Gromov-Hausdorff convergence and convergence of subgroups and quotient spaces.

\

\noindent
{\bf Proposition A.6 (\cite{Fukaya_86})} \thinspace
{\it 
If $(X_n,\Gamma_n , p_n) , (Y, \Lambda, q) \in {\cM}^{Groups}$ and
$${\rm lim}_{n\to\infty} \quad (X_n,\Gamma_n , p_n) = (Y, \Lambda, q) \ts ,$$
then
$${\rm lim}_{n\to\infty} \quad (X_n / \Gamma_n , \bar{p}_n) = (Y / \Lambda, \bar{q}) \ts ,$$
where $\bar{p}_n$ and $\bar{q}$ denote the images of ${p}_n$ and ${q}$, respectively,  under
the canonical orbit space projections $X_n\to X_n / \Gamma_n$
and $Y\to Y / \Lambda$.
}

\

\noindent
{\bf Proposition A.7 (\cite{Fukaya_86})} \thinspace
{\it Let $X_n$ and $Y$ be complete and simply connected Riemannian manifolds
and $\Gamma_n$ resp. $\Lambda$ be closed subgroups of the isometry groups
of $X_n$ resp. $Y$ which all act effectively and properly discontinuously.
Suppose, moreover, that the sectional curvatures of $X_n$ and $Y$ are
uniformly bounded in absolute value. Then 
$${\rm lim}_{n\to\infty} \quad (X_n / \Gamma_n , \bar{p}_n) = (Y / \Lambda, \bar{q}) \ts $$
implies 
$${\rm lim}_{n\to\infty} \quad (X_n,\Gamma_n , p_n) = (Y, \Lambda, q) \ts .$$
}

\

\noindent
{\bf Proposition A.8 (\cite{FY_92}) } \thinspace
{\it 
Let $(X_n,\Gamma_n , p_n) , (Y, \Lambda, q) \in {\cM}^{Groups}$ satisfy
$${\rm lim}_{n\to\infty} \quad (X_n,\Gamma_n , p_n) = (Y, \Lambda, q) \ts $$
and let $\Lambda'$ be a normal subgroup of $\Lambda$ such that the
following conditions hold:

1. $\Lambda / \Lambda'$ is discrete and finitely presented;

2. $Y/\Lambda$ is compact;

3. there exists $r>0$ such that $\Lambda'$ is generated by $\Lambda'(r)$ and such that 
the homomorphism $\pi_1(B_q(r,Y), q) \to \pi_1(Y,q)$ is surjective;

4. for all $n$, $X_n$ is simply connected and $\Gamma_n$ acts freely and properly discontinuously on $X_n$.

\

Then there exists a sequence of normal subgroups $\Gamma'_n$ of $\Gamma_n$ satisfying

$$(a) \quad {\rm lim}_{n\to\infty} \quad (X_n , \Gamma'_n , {p}_n) = (Y ,\Lambda', {q}) \ts ;$$

$$(b) \quad \Gamma_n / \Gamma'_n \text{ is isomorphic to } \Lambda / \Lambda'
\text{ for all sufficiently large } n \ts .$$
}

\

\section{Collapsing of manifolds under both-sided bounds 
on sectional curvature}

We will now briefly describe the collapsing  
of Riemannian manifolds with
both-sided bounds on sectional curvature, where a detailed structure theory 
has been initiated and
developed in the works of Cheeger, Fukaya, Gromov, and others.

\

Let $M=(M^m,g)$ be a Riemannian manifold of dimension $m$    
and let $FM=F(M^m)$ denote its bundle of orthonormal frames. 
When fixing a bi-invariant metric on $O(m)$,    
the Levi-Civita connection of $g$ gives rise to a canonical metric on 
$FM$, so that the projection $FM\to M$ becomes a Riemannian submersion 
and so that $O(m)$ acts on $FM$ by isometries.   
Another fibration structure on $FM$ 
is called {\it $O(m)$ invariant}
if the $O(m)$ action on $FM$    
preserves both its fibres and its structure group.    
   
A {\it pure N-structure} on $M^m$ is defined    
by an $O(m)$ invariant fibration, $\tilde\eta: FM\to B$, with     
fibre a nilmanifold isomorphic to $(N/\Gamma,   
\nabla^{\text{can}})$ and structural group contained in the group   
of affine automorphisms of the fibre, where 
$B$ is a smooth manifold, $N$ is    
a simply connected nilpotent group and $\nabla^{\text{can}}$ the   
canonical connection on $N$ for which all left invariant vector   
fields are parallel. 
A pure $N$-structure on $M$ induces, by $O(m)$-invariance, a partition of
$M$ into ``orbits'' of this structure, and is then said
to have {\it positive rank}
if all these orbits have positive dimension.
A pure N-structure $\tilde\eta: FM\to B$ over    
a Riemannian manifold $(M,g)$   
gives rise to a sheaf on $FM$ whose local sections   
restrict to local right invariant vector fields on the fibres   
of $\tilde\eta$, and if the local sections of this sheaf are local Killing fields   
for the metric $g$, then $g$ is said to be {\it invariant}    
for the N-structure (and  $\tilde\eta$ is then also sometimes referred to as 
{\it pure nilpotent Killing structure for} $g$).
Cheeger, Fukaya and Gromov showed:

\

\noindent
{\bf Theorem B.1 (\cite{CFG}) } \thinspace
{\it
Let for $m\ge 2$ and $D>0$ \ts    
${\frak M}(m,D)$ denote the class of all m-dimensional   
compact connected Riemannian manifolds $(M,g)$ with sectional    
curvature $\vert K_g\vert \le 1$ and diameter $diam(g)\le D$.    
 
Then, given any $\eps>0$,  
there exists a positive number $v=v(m,D,\eps)>0$    
such that if $(M,g)\in{\frak M}(m,D)$ satisfies \ts $vol(g)<v$, then 
$M^m$ admits a pure N-structure $\tilde\eta: FM\to B$ of positive rank so that 

\

\noindent
{(a)} there is a smooth metric $g_\eps$ on $M$ which is invariant for 
the N-structure $\tilde\eta$ and for which all fibres of $\tilde\eta$ have 
diameter less than $\eps$, satisfying 
$$e^{-\eps} g < g_\eps < e^{\eps} g \ , \quad\quad   
  \mid {\nabla}_g - {\nabla}_{g_\eps} \mid \ < \ \eps \ , \quad\quad   
  \mid {\nabla}_{g_\eps}^l R_{g_\eps} \mid \ < \ C(m,l,\eps) \ ;$$

\noindent
{(b)} there exist constants $i=i(m,\eps)>0$ and $C=C(m,\eps)$    
           such that, when equipped with the metric induced by $g_\eps$, 
           the injectivity radius of $B$ is $\ge i$    
           and the second fundamental form of all fibres of    
           $\tilde\eta$ is bounded by $C$.   
}

\

The $O(m)$ invariance of a pure N-structure $\tilde\eta: FM\to B$  
implies that the $O(m)$ action on   
$FM$ descends to an $O(m)$ action on $B$ and that the fibration    
on $FM$ descends to a possibly singular fibration on $M$,    
$\eta: M^m\to B/O(m)$, such that the following diagram commutes.   
   
$$\CD F(M^m) @>\tilde \eta>> B\\   
@VV \pi V   @VV \tilde \pi V\\   
M^m @> \eta >> B/O(m)    
\endCD$$

\

Theorem~B.1 describes the structure of a general  collapse with bounded curvature,
where $B/O(m)$ is in general not a manifold.
However,
in the case where one has a bounded curvature collapse to a manifold limit space,
there is also Fukaya's following equivariant fibre bundle version of the above result:

\

\noindent
{\bf Theorem B.2 (\cite{Fukaya_90, Fukaya_89, Fukaya_88b}) } \thinspace
{\it Let $M_i \in {\frak M}(m,D)$,  $N \in {\frak M}(n,D)$ and let $G$ be a compact Lie group
which acts on $M_i$ and $N$ by isometries so that the $M_n$
converge for $i\to\infty$ to $N$ in the $G$-equivariant Gromov-Hausdorff distance.
Then, for $i$ sufficiently large, there exist  mappings
$\pi_i : M_i \to N$ which satisfisfy

\

\noindent
{(a)} $\pi_i : M_i \to N$ is a fibre bundle and $\pi_i$ is $G$-equivariant;

\

\noindent
{(b)} each fiber $\pi_{i}^{-1}(p)$ carries a flat connection 
which depends smooth\-ly on $p \in N$, such that $\pi_{i}^{-1}(p)$ is
affinely diffeomorphic to an infranil manifold 
$N_{i} /\Delta_{i}$ with its canonical flat connection $\ncan$;

\

\noindent
{(c)} the affine structures on the fibres of $\pi_i : M_i \to N$ are $G$-invariant;

\

\noindent
{(d)} the structure group of $\pi_{i}$ is contained in the
Lie group
$$  { \C(N_{i}) \over \C(N_{i}) \cap \Delta_{i} }  \rtimes {\rm Aut}(\Delta_{i})\,  ,
$$ where $\C(N_{i})$ denotes the center of $N_{i}$;

\

\noindent
{(e)} $\pi_{i}$ is an almost Riemannian submersion in the sense that for any $p\in N$ and
any tangent vector $v\in T_p M_i$ orthogonal to the fibre $\pi_{i}^{-1}(p)$,
one has an estimate
$$e^{-o(i)}\quad < \quad  \vert\vert d\pi_i (v) \vert\vert \ts\ts  / \ts\ts \vert\vert v \vert\vert 
\quad < \quad e^{o(i)} \ts ,$$
where $o(i)$ satisfies ${\text{lim}_{i\to\infty} o(i) = 0}$.
}

\


\begin{thebibliography}{99}

\bibitem{Baues}
O.\ Baues, \emph{Infra-solvmanifolds and
rigidity of subgroups in solvable linear algebraic groups},
Topology, {\bf 43} (2004), no.  4, 903-924.



\bibitem{Ausl_Johns}  L.\ Auslander, F.E.A.\ Johnson, \emph{On a conjecture of C.T.C.\ Wall}, J.\ London Math.\ Soc.\ {\bf 14} (1972), no.\ 2,  331-332.

\bibitem{Browder} W.\ Browder,
\emph{On the action of $\Theta \sp{n}\,(\partial \pi )$}, Differential and
Combinatorial Topology (A Symposium in Honor of Marston Morse),
23-36, Princeton Univ.\  Press, Princeton, N.J., 1965.


%
%
%

\bibitem{CFG} J.\ Cheeger, K.\ Fukaya, M.\ Gromov,
{\em Nilpotent structures and invariant metrics on collapsed manifolds},  
J.\ Amer.\ Math.\ Soc.\  {\bf 5} (1992), no.\ 2, 327-372. 

%

\bibitem{CR_96} 
J. Cheeger, X. Rong, \emph{Existence of polarized F-structures on collapsed manifolds with bounded curvature and diameter}, Geom. Funct. Anal. {\bf 6} (1996), no. 3, 411-429.

\bibitem{F-H}
F. T. Farrell, W. C. Hsiang,
\emph{Topological characterization of flat and
almost flat Riemannian manifolds $M^n$ $(n\neq 3,4)$},
\ {\sl Amer. J. Math.} {\bf 105} 
(1983), 641-672.



\bibitem{Fukaya_90}
K.\ Fukaya, 
\emph{Hausdorff convergence of Riemannian manifolds and its applications}, Recent Topics in Differential and Analytic Geometry, Advanced Studies in Pure Mathematics {\bf 18}-I, 143-238, 1990. 

\bibitem{Fukaya_89}
K.\ Fukaya, 
\emph{Collapsing Riemannian manifolds to ones with lower dimension. II.} J. Math. Soc. Japan {\bf 41} (1989), no. 2, 333-356.



\bibitem{Fukaya_88a}
K.\ Fukaya, 
\emph{
A compactness theorem of a set of aspherical Riemannian orbifolds}, 
A f\^ete of topology, 391-413, Academic Press, Boston, MA, 1988.


\bibitem{Fukaya_88b}
K.\ Fukaya, 
\emph{A boundary of the set of the Riemannian manifolds with bounded curvatures and diameters}, J. Differential Geom. {\bf 28} (1988), no. 1, 1-21.



\bibitem{Fukaya_87}
K.\ Fukaya, 
\emph{
Collapsing Riemannian manifolds to ones of lower dimensions}, J. Differential Geom. {\bf 25} (1987), no. 1, 139-156.


\bibitem{Fukaya_86} K.\ Fukaya, 
\emph{Theory of convergence for Riemannian orbifolds}, Japan J. Math. 
{\bf 12} (1986), 121-160.

\bibitem{FY_92}
K.~Fukaya and T.~Yamaguchi.
\newblock {\it The fundamental groups of almost nonnegatively curved
manifolds}, 
\newblock {Ann. of Math. (2)} {\bf 136} (1992), 253-333.

\bibitem{Gro_78}  M.~Gromov, \emph{Almost flat manifolds}, J. Differential Geom. {\bf 13} (1978), 231-241.

\bibitem{Gro_99}
M.~Gromov, 
\newblock {\em Metric structures for Riemannian and non-Riemannian spaces}.
\newblock Progress in Mathematics, vol. 152, Birkh\"auser, Boston, 1999.


\bibitem{KLR}
Y. Kamishima, K.B. Lee and F. Raymond,
\emph{The Seifert construction and its applications to
infranil manifolds}, Quart.\ J.\ Math., Oxford (2),
{\bf 34} (1983), 433-452.
%

%
%

\bibitem{LR_3}
K.B.\ Lee and F. Raymond,
\emph{Seifert manifolds}, Handbook of geometric topology,  635-705, North-Holland, Amsterdam, 2002.

\bibitem{LR_4}
K.B.\ Lee and F. Raymond,
\emph{Seifert fiberings}. 
Mathematical Surveys and Monographs, 166. 
American Mathematical Society, Providence, RI, 2010.


\bibitem{Raghunathan}
M.S. Raghunathan,  \emph{Discrete subgroups of Lie groups}, 
Ergebnisse Math. vol.~  68, Springer Verlag, 
Berlin, New York, 1972.

\bibitem{R_93} 
X. Rong, \emph{
The existence of polarized F-structures on volume collapsed 4-manifolds}, Geom. Funct. Anal. {\bf 3} (1993), no. 5, 474-501.

\bibitem{Ruh_82} E.A~Ruh, \emph{Almost flat manifolds}, J. Differential Geom. {\bf 17} (1982), 1-14.

\bibitem{Tusch} W.\ Tuschmann,
\emph{Collapsing, solvmanifolds and infrahomogeneous spaces},
Differential Geom.\ Appl.\  {7} (1997),  no.\ 3, 251-264.

\bibitem{Wall} C.T.C.\ Wall, \emph{Surgery on compact manifolds},
Academic Press, 1970.


\bibitem{Wolf}
J.\ A.\ Wolf,
\emph{Spaces of constant curvature}, new edition, AMS 2010.

\end{thebibliography}
\end{document}